\newtheorem{theorem}{Theorem}[section]
\newtheorem{remark}{Remark}[section]
\newtheorem{definition}{Definition}[section]
\newtheorem{lemma}[theorem]{Lemma}
\newtheorem{proposition}[theorem]{Proposition}
\newcommand{\n}{\rho}
\newcommand{\lm}{\lambda}
\renewcommand{\div}{ {\rm div }  }
\newcommand{\na}{\nabla }
\newcommand{\pa}{\partial}
\newcommand{\bt}{\begin{theorem}}
\newcommand{\bl}{\begin{lemma}}
\newcommand{\el}{\end{lemma}}
\newcommand{\et}{\end{theorem}}
\newcommand{\ga}{\gamma}
\newcommand{\OM}{\Omega}
\newcommand{\de}{\delta}
\newcommand{\ve}{\varepsilon}
\newcommand{\la}{\label}
\newcommand{\si}{\sigma}
\newcommand{\bn}{\begin{eqnarray}}
\newcommand{\en}{\end{eqnarray}}
\newcommand{\bnn}{\begin{eqnarray*}}
\newcommand{\enn}{\end{eqnarray*}}
\newcommand{\bnnn}{\begin{eqnarray*}}
\newcommand{\ennn}{\end{eqnarray*}}
\newcommand{\ben}{\begin{enumerate}}
\newcommand{\een}{\end{enumerate}}
\newcommand{\du}{\dot{u}}
\newcommand{\ba}{\begin{aligned}}
\newcommand{\ea}{\end{aligned}}
\newcommand{\be}{\begin{equation}}
\newcommand{\ee}{\end{equation}}
\def\p{\partial}
\def\norm[#1]#2{\|#2\|_{#1}}
\def\lap{\triangle}
\def\lam{\lambda}
\def\o{\omega}
\def\rr{\mathbb{R}^2}
\title{Global Existence and Incompressible Limit of the Cauchy Problem for 2D Compressible Navier-Stokes Equations with Large Bulk Viscosity and Large Initial Data}
\date{}
\author{$\text{Qinghao L{\small EI}}^{a,b}, \text{Chengfeng X{\small{IONG}}}^{a,b}\thanks{Email addresses:  leiqinghao22@mails.ucas.ac.cn (Q. H. Lei), xiongchengfeng20@mails.ucas.ac.cn (C. F. Xiong) }$\\
a. School of Mathematical Sciences,\\ University of Chinese Academy of Sciences,
Beijing 100049, P. R. China;\\
b. Institute of Applied Mathematics,\\ Academy of Mathematics and Systems Science, \\
Chinese Academy of Sciences, Beijing 100190, P. R. China}
\begin{document}
\maketitle

\begin{abstract}
This paper investigates the Cauchy problem for the barotropic compressible Navier-Stokes equations in $\mathbb{R}^2$ with the constant state as far field, which may be vacuum or non-vacuum.
Under the assumption of a sufficiently large bulk viscosity coefficient,
we establish the global existence and large time behavior of weak, strong, and classical solutions.
It should be mentioned that this result is obtained without any restrictions on the size of the initial data.
Moreover, we demonstrate that as the bulk viscosity coefficient tends to infinity, the solutions of the compressible Navier-Stokes equations converge to those of the inhomogeneous incompressible Navier-Stokes equations.
The incompressible limit of the weak solutions holds even without requiring the initial velocity to be divergence-free. \\
\par\textbf{Keywords:} Compressible Navier-Stokes equations; Cauchy problem; Global existence; Incompressible limit; Large initial data; Vacuum
\end{abstract}

\section{Introduction and main results}
We study the two-dimensional barotropic compressible
Navier-Stokes equations which read as follows:
\be\ba\la{ns}
\begin{cases}
  \rho_t + \div(\rho u) = 0,\\
  (\n u)_t + \div(\n u\otimes u) -\mu \Delta u - (\mu + \lm)\na\div u
    +\na P = 0,
\end{cases}
\ea\ee
where $t \ge 0$ is time, $x \in \OM \subset \rr$ is the spatial coordinate,
$\n=\n(x,t)$ and $u(x,t)=(u^1(x,t),u^2(x,t))$ represent the 
density and velocity of the compressible flow respectively, and the 
pressure $P$ is given by
\be\la{i1}
P=R\n^\ga,
\ee
with constants $R>0,\ga>1$. Without loss of generality, it is assumed that
$R=1$. The shear viscosity coefficient $\mu$ and bulk viscosity coefficient $\lam$ satisfy
the physical restrictions:
\be\la{i2}
\mu>0,\quad \mu+\lam\geq 0.
\ee
For later purpose, we set 
\be\la{nu}
\nu := 2\mu + \lam,
\ee
which together with (\ref{i2}) yields that 
\be\la{numu}\nu\geq\mu.\ee
Let $\OM = \rr$ and $\tilde{\rho}$ be a fixed nonnegative constant.
We look for the solutions $ \left(\n(x,t),u(x,t)\right) $ to the Cauchy problem for (\ref{ns}) with the far-field behavior:
\be\la{}\ba
u(x,t)\rightarrow 0,\quad \rho(x,t)\rightarrow \tilde{\rho} \ge 0,  \quad\mbox{
	as }\, |x|\rightarrow\infty,
\ea\ee
and the initial data
\be\la{i3}
\n(x,0)=\n_0(x),\quad \n u(x,0)=\n_0u_0(x), \quad x \in \rr.
\ee

Moreover, when $\tilde{\n}=0$, it is obvious that the total mass of smooth enough solutions
of (\ref{ns}) is conserved through the evolution, that is, for all
$t>0$,
\be\ba\la{conv}
\int_{\rr} \n dx = \int_{\rr}\n_0 dx.
\ea\ee
Without loss of generality, when $\tilde{\n}=0$, we shall assume that 
\be\la{rho0}\ba
\int \n_0 dx =1,
\ea\ee
which implies that there exists a positive constant $N_0$ such that
\be\la{rho00}\ba
\int_{B_{N_0}} \n_0 dx \ge \frac{1}{2}\int \n_0 dx =\frac{1}{2}.
\ea\ee

There exists a huge literature concerning the theory of weak and classical solutions to the compressible Navier-Stokes equations.
The one-dimensional problem has been thoroughly investigated (see \cite{H4,KS,S1,S2} and references therein).
For the multi-dimensional problem, the local existence and uniqueness of classical 
solutions in the absence of vacuum were first proved by Nash \cite{N} and Serrin \cite{S}.
Recently, for the case in which the initial density contains vacuum and may vanish in open sets,
the local existence and uniqueness of strong and classical solutions have been rigorously established in \cite{CCK,CK,CK2,SS,LLL}
and references therein.
In particular, for the two-dimensional Cauchy problem, Li and Liang \cite{LLL} proved
the existence and uniqueness of local strong and classical solutions to (\ref{ns})
with vacuum states at infinity.
As for global solutions, 
Matsumura and Nishida \cite{MN1} first established the existence 
of global classical solutions for initial data sufficiently 
close to a non-vacuum equilibrium in a Sobolev space $H^s$.
Subsequently, Hoff \cite{H1,H2,H3} studied the problem with discontinuous
initial data with a new type of a priori estimates of the material derivative.
A major breakthrough was achieved 
by Lions \cite{L2}, who proved the global existence of weak solutions 
for arbitrarily large initial data under the assumptions 
that the initial energy is finite and $\ga$ is sufficiently large.
Later, the range of the adiabatic exponent $\ga$ was relaxed by Feireisl-Novotn\'y-Petzeltov\'a \cite{FNP}.
Recently, for the case that the initial density allows vacuum,
Huang-Li-Xin\cite{HLX2} proved the global
existence and uniqueness of classical solutions to the Cauchy problem
in three spatial dimensions provided that the initial energy is sufficiently small
while allowing for large oscillations.
Subsequently, Li-Xin\cite{LX2} extended these results to the two-dimensional Cauchy problem under the same small energy assumptions.

More recently, Danchin-Mucha \cite{DM} established the global existence of weak solutions under sufficiently large bulk viscosity and a restriction of the upper bound of $\nu^{1/2} \|\div u_0\|_{L^2}$.
Moreover, they proved that when the bulk viscosity tends to infinity,
these weak solutions converge to those of the inhomogeneous incompressible Navier-Stokes system.
Subsequently, Liao-Zodji \cite{LZ} extended the result of \cite{DM} to the two-dimensional whole space with non-vacuum far-field density.
In our recent work \cite{LX3}, we generalized the result of \cite{DM},
requiring only that the bulk viscosity coefficient be sufficiently large to establish the global existence of solutions to (\ref{ns}).
Furthermore, we do not even require the initial velocity field to be divergence-free
when deriving the singular limit from the compressible Navier-Stokes equations to the inhomogeneous incompressible Navier-Stokes equations.
In this paper, we address the Cauchy problem, establishing the global existence and large time behavior of weak, strong, and classical solutions
without imposing any restriction on $\nu^{1/2} \|\div u_0\|_{L^2}$. 
Furthermore, we prove that in the limit as the bulk viscosity tends to infinity,
these solutions converge to solutions of the
inhomogeneous incompressible Navier-Stokes equations.

Before stating the main results, we first explain the notations
and conventions used throughout this paper. We denote
\be\ba\la{i4}
  \int f dx = \int_{\rr} fdx.
\ea\ee
For $1\leq r\leq\infty$, we also denote the standard Lebesgue and Sobolev
spaces as follows:
\be\la{i5}\ba
{\left\{\begin{array}{ll}
L^r=L^r(\rr ),\quad D^{k,r}=D^{k,r}(\rr )=\{v\in L^1_\mathrm{loc}(\rr )| \nabla ^k v\in L^r(\rr )\}, \\ D^1=D^{1,2},\quad W^{k,r} = W^{k,r}(\rr ) , \quad H^k = W^{k,2}.
\end{array}\right.}
\ea\ee
The initial total energy is defined as follows:
\be\la{e0}\ba
E_0 := \int \frac{1}{2} \rho_0 |u_0|^2 + H(\n_0) dx,
\ea\ee
where $H(\n)$ denotes the potential energy density given by
\bnn
H(\n) \triangleq \n \int_{\tilde{\rho}}^{\n} \frac{P(s)-P(\tilde{\rho})}{s^2} ds.
\enn
It is clear that
\be\la{qkjsn}\ba
\begin{cases}
H(\n) = \frac{1}{\ga-1} \n^\ga , & \quad \text{ if } \tilde{\n}=0, \\
\frac{1}{c(\hat{\n},\tilde{\n})} (\n - \tilde{\n})^2 \le H(\n)
\le c(\hat{\n},\tilde{\n}) (\n - \tilde{\n})^2, & \quad \text{ if } \tilde{\n} > 0, \  0 \le \n \le \hat{\n},
\end{cases}
\ea\ee
for some positive constant $c(\hat{\n},\tilde{\n})$.

Then we provide the definition of weak and strong solutions to (\ref{ns}).
\begin{definition}
If $(\n,u)$ satisfies (\ref{ns}) in the sense of distribution, then we call $(\n,u)$ a weak solution.
Moreover, for a weak solution if
all derivatives involved in (\ref{ns}) are regular distributions
and equations (\ref{ns}) hold almost everywhere in 
$\rr\times(0,T)$, then $(\n,u)$ is called a strong solution.
\end{definition}

The first main result concerning the existence of weak solutions can be described as follows:
\begin{theorem}\la{th0}
Assume the initial data $(\n_0,u_0)$ satisfy
\be \la{wsol1}\ba
\n_0 \ge 0, \quad \n_0 |u_0|^2 \in L^1,  \quad u_0 \in D^1,
\ea\ee
and for some $a>1$,
\be\la{wsol01}\ba
\begin{cases}
{\bar{x}}^a \rho_0 \in L^1, \ \rho_0 \in L^\infty,  \quad &\mathrm{ if \  }  \tilde{\n}=0,\\
\n_0-\tilde{\rho} \in L^2 \cap L^\infty,  \quad &\mathrm{ if \  }  \tilde{\n}>0,
\end{cases}
\ea\ee
where
\be\la{wsol02}\ba
{\bar{x}}\triangleq (e+|x|^2)^{\frac{1}{2}} \log ^2 (e+|x|^2).
\ea\ee

\noindent\textbf{\textup{(1) Vacuum far-field density ($\tilde{\n}=0$):}}
There exists a positive constant $\nu_1$ depending only on
$N_0$, $\ga$, $\mu$, $a$, $E_0$, $\|{\bar{x}}^a \n_0\|_{L^1}$, $\|\n_0\|_{L^\infty}$, and $\| \na u_0\|_{L^2}$,
such that when $\nu \ge \nu_1$,
the problem \eqref{ns}--\eqref{i3} with $\tilde{\n}=0$ has at least one
global weak solution $(\n,u)$ in $\rr \times (0,\infty)$ satisfying
\be\la{wsol2}\ba
0\le \n(x,t) \leq 2 \left( 1 + \| \n_0 \|_{L^\infty} \right),
\quad \mathrm{for\ any\ }(x,t)\in \rr \times[0,\infty),
\ea\ee
and for any $1 \le p <\infty$,
\be\la{wsol3}\ba
\begin{cases}
\rho\in L^\infty(\rr \times (0,T)) \cap C([0,T];L^p), \quad {\bar{x}}^a \rho \in L^\infty(0,T;L^1) \\ 
\na u\in L^\infty(0,T;L^2), t^{1/2}u_t \in L^2(0,T;L^2), t^{1/2} \na u \in L^\infty(0,T;L^p).
\end{cases}
\ea\ee
Moreover, there exists a positive constant $N_1$ depending on
$N_0$, $\| {\bar{x}}^a \rho_0 \|_{L^1}$, and $E_0$, such that
\be\la{wsol30}\ba
\inf_{0 \le t \le T} \int_{ B_{N_1 (1+t)} }\rho (x,t) dx \ge \frac{1}{4}.
\ea\ee
Furthermore, $(\n,u)$ has the following decay rates, that is, for $t \ge 1$,
\be\la{wsol300}\ba
\begin{cases}
\| \nabla u(\cdot,t) \|_{L^{p}} \leq C(p) t^{-1+\frac{1}{p}}, \ & \textnormal{ for } p \in [2,\infty), \\
\| P(\cdot,t) \|_{L^{r}} \leq C(r) t^{-1+\frac{1}{r}}, \ & \textnormal{ for } r \in (1,\infty), \\
\| \sqrt{\n} \dot{u}(\cdot,t) \|_{L^2} \le C t^{-1},
\end{cases}
\ea\ee
where $C(z)$ depends on $z$ besides $N_0$, $\ga$, $\mu$, $E_0$, $\|{\bar{x}}^a \n_0\|_{L^1}$, $\|\n_0\|_{L^\infty}$, and $\| \na u_0\|_{L^2}$.

\noindent\textbf{\textup{(2) Non-vacuum far-field density ($\tilde{\n}>0$):}}
There exists a positive constant $\nu_2$ depending only on
$\ga$, $\mu$, $E_0$, $\tilde{\n}$, $\|\n_0\|_{L^\infty}$, and $\| \na u_0\|_{L^2}$,
such that when $\nu \ge \nu_2$,
the problem \eqref{ns}--\eqref{i3} with $\tilde{\n}>0$ has at least one
global weak solution $(\n,u)$ in $\rr \times (0,\infty)$ satisfying
\be\la{2wsol2}\ba
0\le \n(x,t) \leq 2 \left( 1 + \| \n_0 \|_{L^\infty} \right) e^{\frac{1}{\ga} \tilde{\n}^\ga},
\quad \mathrm{for\ any\ }(x,t)\in \rr \times[0,\infty).
\ea\ee
and for any $0<T<\infty$, $1 \le p < \infty $, and $2 \le s <\infty$,
\be\la{wsol4}\ba
\begin{cases}
\rho-\tilde{\n} \in L^\infty(\rr \times (0,T)) \cap C([0,T];L^s), \\ 
u\in L^\infty(0,T;H^1), t^{1/2}u_t \in L^2(0,T;L^2), t^{1/2} \na u \in L^\infty(0,T;L^p).
\end{cases}
\ea\ee
Furthermore, the following large-time behavior holds:
\be\la{cpwsol4}\ba
\lim_{t \to \infty} \left( \| \n(\cdot,t) - \tilde{\n} \|_{L^s} + \| \na u(\cdot,t) \|_{L^r} \right) = 0,
\ea\ee
for any $s \in (2,\infty)$ and $r \in [2,\infty)$.
\end{theorem}

\begin{theorem}\la{th01}
Let $\tilde{\n}=0$, and suppose the initial data $(\n_0,u_0)$ satisfy the assumptions of Theorem \ref{th0} 
in this case.
For $\nu_1$ determined in Theorem \ref{th0}, when $\nu \ge \nu_1$,
we denote by $(\n^{\nu},u^{\nu})$ the weak solution of 
\eqref{ns}--\eqref{i3} established in Theorem \ref{th0}.
Then, as $\nu$ tends to $\infty $, there exists a subsequence of $(\n^{\nu},u^{\nu})$ 
that converges to the solution $(\n, u)$ to the following
inhomogeneous incompressible Navier-Stokes equations:
\be\la{isol2}\ba
\begin{cases}
\n_t+\div(\n u)=0,\\
(\n u)_t+\div(\n u\otimes u) -\mu \Delta u + \na \pi =0, \\
\div u=0,
\end{cases} 
\ea\ee
with initial data $\n(\cdot,0)=\n_0,\ \n u(\cdot,0)=m_0:=\n_0 u_0$.
Furthermore, $(\n,u)$ satisfies 
for any $0<T<\infty$, $0<R<\infty$, $2<r<\infty$ and $1\le p <\infty$,
\be\la{isol3}\ba
\begin{cases}
\rho\in L^{\infty}(\rr \times (0,T)) \cap C([0,T];L^p), \quad {\bar{x}}^a \rho \in L^\infty(0,T;L^1), \\ 
u\in L^2(0,T;L^2(B_R)), \\
\na u,\ \sqrt{t} \sqrt{\n} \du,\ \sqrt{t} \na \pi,\ \sqrt{t} \na^2 u,\ t \na \du \in L^2(\rr \times (0,T)), \\
\sqrt{t} \na u,\ t \sqrt{\n} \du,\ t \na \pi,\ t \na^2 u \in L^\infty(0,T;L^2), \\
\sqrt{t} \pi \in L^2(0,T;L^r), \quad t \pi \in L^\infty(0,T;L^r).
\end{cases} 
\ea\ee
Moreover, if the initial data $(\n_0,u_0)$ further satisfy for some $a>1$,
\be\la{ws}\ba
{\bar{x}}^a \rho_0 \in L^\infty(\rr),\ \div u_0=0,
\ea\ee
and the compatibility condition
\be\la{lws00}\ba
- \mu \Delta u_0 + \na \pi_0 = \sqrt{\n_0} g_1, \ \textnormal{ for } u_0 \in L^2(\rr),\ \na u_0 \in L^1(\rr),\ g_1 \in L^2(\rr),
\ea\ee
then the entire sequence $(\n^{\nu},u^{\nu})$ converges to the unique global solution of 
\eqref{isol2},
and $(\n,u)$ satisfies for any $0<T<\infty$, $2<r<\infty$, $1\le p <\infty$, and $0<R<\infty$,
\be\la{lws1}\ba
\begin{cases}
\rho \in C([0,T];L^p), \quad
{\bar{x}}^a \n \in L^\infty(0,T;L^1) \cap L^{\infty}(\rr \times (0,T)), \\
u\in L^\infty(B_R \times (0,T)), \quad \na u, \ \sqrt{\n} \du,\ \na^2 u,\ \na \pi \in L^\infty(0,T;L^2), \\ 
\na \du \in L^2(\rr \times (0,T)), \quad \pi \in L^\infty(0,T;L^r).
\end{cases} 
\ea\ee
\end{theorem}

\begin{theorem}\la{th001}
Let $\tilde{\n}>0$, and assume the initial data $(\n_0,u_0)$ satisfy the hypotheses of Theorem \ref{th0} in this case.
For $\nu_2$ specified in Theorem \ref{th0}, when $\nu \ge \nu_2$,
we denote $(\n^{\nu},u^{\nu})$ as the weak solution to \eqref{ns}--
\eqref{i3} given by Theorem \ref{th0}.
Then, as $\nu$ tends to $\infty$, the solution sequence $(\n^{\nu},u^{\nu})$ admits a subsequence that
converges to the solution $(\n, u)$ to \eqref{isol2},
which satisfies for any $0<T<\infty$ and $2\le s <\infty$,
\be\la{lisol3}\ba
\begin{cases}
\rho -\tilde{\n} \in L^{\infty}(\rr \times (0,T)) \cap C([0,T];L^s), \\ 
u\in L^2(0,T;H^1), \\
\sqrt{t} \na^2 u,\ \sqrt{t} \na \pi,\ t \sqrt{\n} u_t,\ t^2 \na u_t \in L^2(\rr \times (0,T)), \\
\sqrt{t} \na u,t \na \pi,\ t \na^2 u,\ t^2 \sqrt{\n} u_t \in L^\infty(0,T;L^2).
\end{cases} 
\ea\ee
If the initial data $(\n_0,u_0)$ additionally satisfy
\be\la{0lws}\ba
\div u_0=0,
\ea\ee
then the entire sequence $(\n^{\nu},u^{\nu})$ converges to the unique global solution 
$(\n,u)$ to \eqref{isol2}, which satisfies for any $0<T<\infty$, $1\le p <\infty$ and $2\le s <\infty$,
\be\la{0lws1}\ba
\begin{cases}
\rho -\tilde{\n} \in L^{\infty}(\rr \times (0,T)) \cap C([0,T];L^s), \\
u\in L^\infty(0,T;H^1), \quad \sqrt{\n} u \in C([0,T];L^2) \\
\sqrt{\n} u_t,\ \na^2 u,\ \na \pi,\ \sqrt{t} \na u_t \in L^2(\rr \times (0,T)), \\ 
\sqrt{\n} u_t,\ \sqrt{t} \na \pi,\ \sqrt{t} \na^2 u \in L^\infty(0,T;L^2) \cap L^2(0,T;L^p).
\end{cases} 
\ea\ee
\end{theorem}

\begin{remark}\la{ctoi1}
It is noted that we prove the convergence of the compressible Navier-Stokes equations to the incompressible Navier-Stokes equations
without imposing the restrictive condition ${\rm div} u_0 = 0$.
In fact, as shown in \cite[Theorem 2.1]{L1}, the inhomogeneous incompressible Navier-Stokes equations
 \eqref{isol2} admit global solutions even when $\div u_0 \neq 0$.
\end{remark}

\begin{remark}\la{ctoi2}
For the solution $(\n,u)$ of (\ref{isol2}) satisfying (\ref{isol3}) or (\ref{lisol3})
with the initial data $(\n_0,m_0) = (\n_0,\n_0u_0)$.
This meaning that
for all $T\in (0,\infty)$, 
$(-\Delta)^{-1/2}\na^\bot\cdot \n u \in C([0,T];L^2_w)$ with 
$(-\Delta)^{-1/2}\na^\bot\cdot \n u(\cdot,0) = (-\Delta)^{-1/2}\na^\bot\cdot m_0$.
The reason why we cannot  obtain the time-continuitiy of $\n u $ is that 
$ \na \pi $ is only in $L^2(\tau,T;L^2)$ rather than $L^2(0,T;L^2)$.
For a detailed analysis of the time-continuity of properties of $\n u$ see \cite[Theorem 2.2]{L1}.
\end{remark}

\begin{theorem}\la{th1}
In addition to the assumption of the initial data $(\n_0,u_0)$ in Theorem \ref{th0}, we assume further that for some $q>2$,
\be\la{ssol1}\ba
\begin{cases}
{\bar{x}}^a \rho_0 \in H^1 \cap W^{1,q},     \quad &\mathrm{ if \  }  \tilde{\n}=0,\\
\n_0-\tilde{\rho} \in H^1 \cap W^{1,q},      \quad &\mathrm{ if \  }  \tilde{\n}>0.
\end{cases}
\ea\ee

\noindent\textbf{\textup{(1) Vacuum far-field density ($\tilde{\n}=0$):}}
For $\nu_1$ determined in Theorem $\ref{th0}$, when $\nu \ge \nu_1$,
the problem \eqref{ns})--\eqref{i3} with $\tilde{\n}=0$ has a unique global strong solution $(\n,u)$ in $\rr \times (0,\infty)$
satisfying \eqref{wsol2}, \eqref{wsol30}, \eqref{wsol300}, and
\be\la{ssol4}\ba
\begin{cases}	
\rho \in C([0,T];L^1 \cap H^1\cap W^{1,q} ),\\ 
{\bar{x}}^a\rho \in L^\infty ( 0,T ;L^1\cap H^1\cap W^{1,q} ),\\ 
\sqrt{\rho } u,\,\nabla u,\, {\bar{x}}^{-1}u, \, \sqrt{t} \sqrt{\rho } u_t \in L^\infty (0,T; L^2 ) , \\ 
\nabla u\in L^2(0,T;H^1)\cap L^{(q+1)/q}(0,T; W^{1,q}), \\ 
\sqrt{t}\nabla u\in L^2(0,T; W^{1,q} ) , \\ 
\sqrt{\rho } u_t, \, \sqrt{t}\nabla u_t ,\, \sqrt{t} {\bar{x}}^{-1}u_t\in L^2({\mathbb {R}^2 }\times (0,T)).
\end{cases} 
\ea\ee

\noindent\textbf{\textup{(2) Non-vacuum far-field density ($\tilde{\n}>0$):}}
For $\nu_2$ as in Theorem $\ref{th0}$, when $\nu \ge \nu_2$,
the problem \eqref{ns})--\eqref{i3} with $\tilde{\n}>0$ has a unique global strong solution $(\n,u)$ in $\rr \times (0,\infty)$
satisfying \eqref{2wsol2}, \eqref{cpwsol4}, and
\be\la{ssol5}\ba
\begin{cases}	
\rho-\tilde{\n}\in C([0,T];W^{1,q} ), \quad \n_t\in L^\infty(0,T;L^2), \\ 
u\in L^\infty(0,T; H^1) \cap L^{(q+1)/q}(0,T; W^{2,q}), \\ 
t^{1/2}u \in L^2(0,T; W^{2,q}) \cap L^\infty(0,T;H^2), \\
t^{1/2}u_t \in L^2(0,T;H^1), \\
\n u\in C([0,T];L^2), \quad \sqrt{\n} u_t\in L^2(\rr \times(0,T)),
\end{cases} 
\ea\ee
for any $0<T<\infty$.
\end{theorem}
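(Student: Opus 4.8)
\emph{Plan of proof.} The plan is to obtain the strong solution by combining the local well-posedness theory for strong solutions with vacuum at infinity of Li--Liang \cite{LLL} (and, when $\tilde\n>0$, the analogous local theory) with global-in-time, though $T$-dependent, a priori estimates. Most of the lower-order estimates are already contained in the construction behind Theorem \ref{th0}; the additional input needed here is the propagation of the extra regularity of the initial density imposed in \eqref{ssol1}. In detail: under \eqref{ssol1} there is a unique local strong solution $(\n,u)$ on some $[0,T_{*}]$; one then derives a priori bounds on $[0,T_{*}]$ for all the norms appearing in \eqref{ssol4}--\eqref{ssol5} (including the density bound \eqref{wsol2}), with constants depending only on $T$, the quantities listed in Theorem \ref{th0}, $q$, and $\|{\bar x}^a\n_0\|_{H^1\cap W^{1,q}}$ (resp. $\|\n_0-\tilde\n\|_{H^1\cap W^{1,q}}$) — but \emph{not} on $T_{*}$; a continuation argument then extends the solution to $[0,T]$; and uniqueness follows from a Gronwall estimate for the difference of two solutions.

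For the a priori estimates I would proceed in the following order. First, the density bound \eqref{wsol2}, the basic energy inequality, and the bounds $\na u\in L^\infty(0,T;L^2)$, $\sqrt\n\du\in L^2(\rr\times(0,T))$, $\sqrt{t}\sqrt\n\du\in L^\infty(0,T;L^2)$ and $\sqrt t\,\na\du\in L^2(\rr\times(0,T))$ are obtained as in the proof of Theorem \ref{th0}: the failure of the Poincar\'e inequality on $\rr$ is circumvented by the time-dependent Poincar\'e-type inequality, which is where the weighted control ${\bar x}^a\n\in L^\infty(0,T;L^1)$ is used, and the largeness of $\nu$ ($\nu\ge\nu_1$ or $\nu_2$) is what allows the pressure and divergence terms to be absorbed without any smallness assumption. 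Next, I would use the effective viscous flux $F=\nu\,\div u-(P-P(\tilde\n))$ and the vorticity $\w=\curl u$, which solve $\lap F=\div(\n\du)$ and $\mu\lap\w=\curl(\n\du)$; elliptic regularity then controls $\|\na u\|_{L^p}$ and $\|\na u\|_{W^{1,q}}$ by $\|\n\du\|_{L^2\cap L^q}$, and interpolating the previous $\du$-estimates gives $\na u\in L^{(q+1)/q}(0,T;W^{1,q})$ and $\sqrt t\,\na u\in L^2(0,T;W^{1,q})$, hence (since $q>2$) $\na u\in L^1(0,T;L^\infty)$. Finally, differentiating the transport equation $\n_t+u\cdot\na\n+\n\,\div u=0$ and running Gronwall controlled by $\int_0^T\|\na u\|_{L^\infty}\,dt$ gives $\na\n\in L^\infty(0,T;L^2\cap L^q)$, while the parallel computation for ${\bar x}^a\n$ — in which the term $u\cdot\na{\bar x}^a$ is estimated via a Hardy-type inequality, the logarithmic weight in \eqref{wsol02} being chosen precisely so that $|u\cdot\na{\bar x}|$ is integrable against the norms of $u$ coming from $u\in D^1$ — yields ${\bar x}^a\n\in L^\infty(0,T;L^1\cap H^1\cap W^{1,q})$ (and, when $\tilde\n>0$, $\n-\tilde\n\in C([0,T];W^{1,q})$ and $\n_t\in L^\infty(0,T;L^2)$). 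The remaining regularity, e.g. $\sqrt\n u_t\in L^2(\rr\times(0,T))$ and $\n u\in C([0,T];L^2)$, then follows by rewriting $\du=u_t+u\cdot\na u$ and using the equations together with the bounds above.

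For the continuation, since none of these bounds involves the local existence time, if $T^{*}$ is the maximal existence time and $T^{*}<T$ then all the norms in \eqref{ssol4}--\eqref{ssol5} stay finite as $t\uparrow T^{*}$, so the local theory lets one restart at $t=T^{*}$, contradicting maximality; hence $T^{*}\ge T$ for every finite $T$. For uniqueness, given two strong solutions $(\n_1,u_1)$ and $(\n_2,u_2)$ with the same data, one estimates $\|{\bar x}^{-1}(u_1-u_2)\|_{L^2}$ (or $\|u_1-u_2\|_{L^2}$ when $\tilde\n>0$), a suitable weighted $L^2$-norm of $\n_1-\n_2$, and $\|\na(u_1-u_2)\|_{L^2}$, all of which close by Gronwall thanks to the regularity recorded in \eqref{ssol4}--\eqref{ssol5}, forcing $(\n_1,u_1)=(\n_2,u_2)$ on $[0,T]$.

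\emph{Main obstacle.} The central difficulty, as in Theorem \ref{th0}, is the absence of the Poincar\'e inequality on $\rr$: there is no direct control of $\|u\|_{L^2}$, and the time-dependent Poincar\'e-type substitute forces one to carry the weighted density ${\bar x}^a\n$ — and now, at the strong level, its first derivatives in $L^2\cap L^q$ — along the particle trajectories. Making this propagation work depends on the sharp logarithmic form of ${\bar x}$ matching the Hardy/Gagliardo--Nirenberg bounds on $u$ available in two dimensions, and, intertwined with this, on using the largeness of $\nu$ (rather than any smallness of the data) to close the time-weighted estimates for $\sqrt t\sqrt\n\du$ and $\sqrt t\na\du$; keeping every constant dependent only on $T$ and the data, so that the continuation argument goes through, is the part that requires the most care.
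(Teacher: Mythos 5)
Your outline diverges from the paper in two places, one of which is a genuine gap in the estimates. You propose to first control $\|\nabla u\|_{W^{1,q}}$ by elliptic regularity ``by $\|\rho\dot u\|_{L^2\cap L^q}$'', conclude $\nabla u\in L^1(0,T;L^\infty)$, and only afterwards run Gronwall on the transport equation to get $\nabla\rho\in L^\infty(0,T;L^2\cap L^q)$. This ordering is circular: the elliptic system \eqref{p1} only yields $\|\nabla G\|_{L^q}+\|\nabla\omega\|_{L^q}\le C\|\rho\dot u\|_{L^q}$, and to recover $\nabla^2u$ one must reinstate the pressure through $\nabla\div u=\nu^{-1}(\nabla G+\nabla P)$, so that, as in \eqref{s434}, $\|\nabla^2u\|_{L^q}\le C\|\rho\dot u\|_{L^q}+C\|\nabla\rho\|_{L^q}$; hence the Lipschitz bound on $u$ already requires the very bound on $\nabla\rho$ you intend to deduce from it. The paper closes this loop in Lemma \ref{s22} by a different mechanism: $\|\div u\|_{L^\infty}+\|\omega\|_{L^\infty}$ is bounded through $G$ and $\omega$ alone (estimate \eqref{s435}), the Beale--Kato--Majda type inequality of Lemma \ref{bkm} gives $\|\nabla u\|_{L^\infty}\le C(1+\|\rho\dot u\|_{L^q})\log(e+\|\nabla\rho\|_{L^q})$, and a logarithmic Gronwall argument \eqref{s437}--\eqref{s439}, fed by the time-weighted integrability of $\|\rho\dot u\|_{L^q}$ from \eqref{s422}--\eqref{s423} (which in turn rests on the weighted Lemma \ref{esnr} when $\tilde\rho=0$, resp. Lemma \ref{LZ1} when $\tilde\rho>0$), yields $\sup_t\|\nabla\rho\|_{L^q}\le C$; only then do the $W^{1,q}$ bounds for $\nabla u$ and the weighted bounds for $\bar x^a\rho$ (Lemma \ref{s23}) follow. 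Your sketch omits this logarithmic coupling, and as written the derivation of \eqref{ssol4}--\eqref{ssol5} does not close.

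The construction itself is also structurally different from the paper's, and your version rests on an input the paper never provides. The paper does not perform local existence plus continuation at the strong-solution level: its only local result, Lemma \ref{lct}, concerns classical solutions and requires the extra regularity \eqref{csol1} together with the compatibility condition \eqref{csol2}. Theorem \ref{th1} is instead proved by mollifying the initial data as in \eqref{czbj1}--\eqref{czbj2} so that \eqref{csol1}--\eqref{csol2} hold, applying the already-established Theorem \ref{th2} to the smooth data, noting that the bounds of Lemmas \ref{s21}--\ref{s24} (resp. Lemmas \ref{lsn21}--\ref{lsn22} when $\tilde\rho>0$) are independent of the mollification parameter, and passing to the limit by compactness, with uniqueness proved separately following \cite{LLL}; the continuation argument lives only inside the proof of Theorem \ref{th2}, where the compatibility condition can be restarted at a positive time via $g_3=(\rho^{1/2}\dot u)(\cdot,T^*)$. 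Your plan instead presupposes a local strong well-posedness theorem under exactly \eqref{wsol1}, \eqref{wsol01}, \eqref{ssol1} with no compatibility condition; this is available for $\tilde\rho=0$ from \cite{LLL}, but for $\tilde\rho>0$ it is not supplied by the paper and cannot simply be invoked as ``the analogous local theory'' --- you would need to prove it or, as the paper does, route the argument through approximate classical solutions. If you repair the $\nabla\rho$/Lipschitz circularity and address the local theory for $\tilde\rho>0$, the remainder of your outline (energy and time-weighted material-derivative estimates using the largeness of $\nu$, continuation, weighted Gronwall uniqueness) is consistent with the paper's a priori estimates.
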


\begin{theorem}\la{th2}
Suppose that the initial data $(\n_0,u_0)$ satisfy \eqref{wsol1}, 
\eqref{wsol01}, \eqref{ssol1}
and for some $q>2$ and $\delta_0 \in (0,1)$,
\be\la{csol1}\ba
\begin{cases}
\na^2\n_0,\ \na^2P(\n_0)\in L^2 \cap L^q,\quad {\bar{x}}^{\delta_0}\na^2\n_0,
\ {\bar{x}}^{\delta_0}\na^2P(\n_0),\ \na^2u_0 \in L^2, &\mathrm{ if \  }  \tilde{\n}=0, \\
\na^2\n_0,\ \na^2P(\n_0)\in L^2 \cap L^q, \quad \na^2u_0 \in L^2,  &\mathrm{ if \  }  \tilde{\n}>0.
\end{cases}
\ea\ee
Assume further that the following compatibility condition holds:
\be\la{csol2}\ba
-\mu\lap u_0 - (\mu + \lm )\nabla \div u_0 +  \nabla P(\n_0)=\sqrt{\n_0}g,
\ea\ee
for some $g \in L^2$.

\noindent\textbf{\textup{(1) Vacuum far-field density ($\tilde{\n}=0$):}}
For $\nu_1$ determined in Theorem $\ref{th0}$, when $\nu \ge \nu_1$,
the problem \eqref{ns})--\eqref{i3} with $\tilde{\n}=0$ has a unique global classical solution $(\n,u)$ in $\rr \times (0,\infty)$
satisfying \eqref{wsol2}, \eqref{wsol30}, \eqref{wsol300}, \eqref{ssol4}, and
\be\la{csol3}\ba
\begin{cases}
\nabla ^2\rho , \,\,\nabla ^2 P(\rho )\in C([0,T];L^2\cap L^q ), \\ 
{\bar{x}}^{\delta _0}\nabla ^2 \rho ,\,\, {\bar{x}}^{\delta _0} \nabla ^2 P( \rho ), \, \nabla ^2 u \in L^\infty ( 0,T ;L^2 ) ,\\ 
\sqrt{\rho } u_t, \,\sqrt{t} \nabla u_t,\,\sqrt{t} {\bar{x}}^{-1} u_t,\, t\sqrt{\rho }u_{tt}, \,t \nabla ^2 u_t\in L^\infty (0,T; L^2),\\ 
t\nabla ^3 u\in L^\infty (0,T; L^2\cap L^q), \,\\ 
\nabla u_t,\, {\bar{x}}^{-1}u_t,\, t\nabla u_{tt},\, t{\bar{x}}^{-1}u_{tt}\in L^2(0,T;L^2), \\ 
t \nabla ^2(\rho u)\in L^\infty (0,T;L^{(q+2)/2}).
\end{cases} 
\ea\ee

\noindent\textbf{\textup{(2) Non-vacuum far-field density ($\tilde{\n}>0$):}}
For $\nu_2$ as in Theorem $\ref{th0}$, when $\nu \ge \nu_2$,
the problem \eqref{ns})--\eqref{i3} with $\tilde{\n}>0$ has a unique global classical solution $(\n,u)$ in $\rr \times (0,\infty)$
satisfying \eqref{2wsol2}, \eqref{cpwsol4}, \eqref{ssol5}, and
\be\la{csol4}\ba
\begin{cases}
(\rho-\tilde{\n},P(\n) -P(\tilde{\n}) )\in C([0,T];W^{2,q} ),\quad  (\n_t,P_t)\in L^\infty(0,T;H^1), \\ 
(\n_{tt},P_{tt})\in L^2(0,T;L^2), \\
u\in L^\infty(0,T; H^2) \cap L^2(0,T;H^3), \\ 
\na u_t, \na^3u \in L^{(q+1)/q}(0,T;L^q), \\
t^{1/2}\na^3u \in L^\infty(0,T;L^2)\cap L^2(0,T; L^q), \\
t^{1/2}u_t\in L^\infty(0,T;H^1) \cap L^2(0,T;H^2), \\ 
t^{1/2}\na^2(\n u)\in L^\infty(0,T;L^q),\quad \n^{1/2}u_t\in L^{\infty}(0,T;L^2), \\
t\n^{1/2}u_{tt},\quad t\na^2 u_t \in L^{\infty}(0,T;L^2), \\
t\na^3 u \in L^{\infty}(0,T;L^q),\quad t\na u_{tt} \in L^2(0,T;L^2).
\end{cases} 
\ea\ee
\end{theorem}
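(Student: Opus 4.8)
The plan is to promote the strong solution produced by Theorem~\ref{th1} to a classical one by propagating the extra regularity carried by the initial data in \eqref{csol1} together with the compatibility condition \eqref{csol2}, over an arbitrary but fixed interval $[0,T]$. First I would reuse the approximation scheme behind Theorems~\ref{th0}--\ref{th1}: mollify $(\n_0,u_0)$, solve \eqref{ns} on expanding balls $B_R$ with Dirichlet boundary condition $u|_{\p B_R}=0$ --- for which the local existence and uniqueness of classical solutions is guaranteed by \cite{CK2,LLL} --- and then continue each local solution up to time $T$ by a priori bounds. The crucial point is that the density bound \eqref{wsol2} and \emph{all} the lower-order estimates in \eqref{wsol3}, \eqref{wsol4}, \eqref{ssol4}, \eqref{ssol5} are already available \emph{uniformly in $R$} once $\nu\ge\nu_1$ (resp. $\nu\ge\nu_2$); hence no new threshold on $\nu$ is needed and only the higher-order norms appearing in \eqref{csol3}, \eqref{csol4} must be shown to stay finite on $[0,T]$.

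\textbf{The a priori estimates.} These split into four coupled pieces. (i) \emph{Time-weighted estimates for $u_t$.} Differentiating the momentum equation in $t$, testing with $u_t$ and then with $tu_{tt}$, one controls $\sqrt{\n}\,u_t,\ \sqrt t\,\na u_t,\ \sqrt t\,{\bar{x}}^{-1}u_t$ and $t\sqrt{\n}\,u_{tt},\ t\na u_{tt}$; the initial value $\|\sqrt{\n_0}\,u_t(\cdot,0)\|_{L^2}$ is finite precisely because of \eqref{csol2}, which gives $\sqrt{\n_0}\,u_t(\cdot,0)=g_3-\sqrt{\n_0}\,(u_0\cdot\na)u_0$. (ii) \emph{Elliptic (Lam\'e) regularity.} Writing the momentum equation as $\mu\lap u+(\mu+\lm)\na\div u=\n\du+\na P$ and applying $L^2\cap L^q$ elliptic estimates (one further derivative for $\na^3 u$) converts the bounds on $\n\du$ and on $\na^2 P$ into bounds on $\na^2 u$ and $t\na^3 u$; the large bulk viscosity $\nu=2\mu+\lm$ enters here only through the lower-order control of $\div u$ via the effective viscous flux, already secured in Theorem~\ref{th0}. (iii) \emph{Transport estimates for $\na^2\n,\ \na^2 P$.} Differentiating $\n_t+u\cdot\na\n+\n\div u=0$ twice and running Gr\"onwall needs $\int_0^T(\|\na u\|_{L^\infty}+\|\na^2u\|_{L^q})\,dt<\infty$, which follows from (ii) and Gagliardo--Nirenberg once the time weights in $t\na^3u\in L^\infty(0,T;L^2\cap L^q)$ are in place. (iv) \emph{Space-weighted versions.} Multiplying the transport equations for $\na^2\n$ and $\na^2P$ by ${\bar{x}}^{2\de_0}$ and closing requires handling $u\cdot\na{\bar{x}}$, which is absorbed using the logarithmic weight \eqref{wsol02}, the bound ${\bar{x}}^{-1}u\in L^\infty(0,T;L^2)$ from \eqref{ssol4}, and $\na u\in L^1(0,T;L^\infty)$; this yields ${\bar{x}}^{\de_0}\na^2\n,\ {\bar{x}}^{\de_0}\na^2P\in L^\infty(0,T;L^2)$. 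Finally, $t\na^2(\n u)\in L^\infty(0,T;L^{(q+2)/2})$ and $t\na^3u\in L^\infty(0,T;L^2\cap L^q)$ come by combining (i)--(iv). All of these are carried out on $B_R$ with constants independent of $R$.

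\textbf{Conclusion and the main difficulty.} With the above bounds a continuation argument extends each approximate classical solution to $[0,T]$, and a compactness argument ($R\to\infty$, Aubin--Lions together with weak-$*$ lower semicontinuity) passes to the limit and produces a classical solution of \eqref{ns}--\eqref{i3} on $\rr\times(0,T)$ satisfying \eqref{csol3} when $\tilde{\n}=0$ and \eqref{csol4} when $\tilde{\n}>0$; uniqueness follows from a weighted $L^2$ energy estimate for the difference of two solutions --- using ${\bar{x}}^{-1}u\in L^\infty(0,T;L^2)$ to control the weighted terms --- together with Gr\"onwall. The main obstacle is steps (iii)--(iv): propagating the regularity and the spatial decay of the density without any smallness assumption on the data. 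Since the large bulk viscosity only damps the divergence of the velocity, the rotational part of $\na u$ must be controlled through the $\du$-estimates of step (i) and the elliptic regularity of step (ii), and all the time weights --- needed because at $t=0$ the vacuum layer controls only $\sqrt{\n}\,u_t$, not $u_t$ --- must be threaded consistently with the space weights ${\bar{x}}^{\de_0}$ --- needed because the Poincar\'e inequality fails on $\rr$ --- all while keeping the constants independent of the truncation radius $R$.
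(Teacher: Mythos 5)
Your outline of the higher-order a priori estimates (time-weighted bounds on $u_t$, $u_{tt}$ seeded by the compatibility condition, Lam\'e elliptic regularity for $\na^2u$ and $t\na^3u$, transport estimates for $\na^2\n$, $\na^2P$, and the ${\bar{x}}^{\delta_0}$-weighted versions) matches in spirit what the paper does in Section 4 (Lemmas \ref{c21}, \ref{c26}, \ref{lcn21}, \ref{lcn26}, themselves largely delegated to \cite{LLL,HLX2}). But your construction route has a genuine gap. The paper does \emph{not} build the classical solution by mollifying the data and solving Dirichlet problems on expanding balls $B_R$: it invokes the whole-plane local existence theory (Lemma \ref{lct}, from \cite{LLL,LZ}) directly and then runs a continuation argument on $\rr$. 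Your scheme requires all the key estimates of Section 3 to hold on $B_R$ with $u|_{\pa B_R}=0$, \emph{uniformly in $R$}, and this is precisely where the argument would break or demand substantial new work: the effective-viscous-flux estimates \eqref{p2} are Riesz-transform (whole-space) estimates and acquire boundary terms under Dirichlet conditions; the bound for $I_2$ in Lemma \ref{l3} rests on the $\mathcal{H}^1$--$\mathcal{BMO}$ compensated-compactness duality of \cite{CLMS,FC} on $\rr$; and the weighted inequalities \eqref{WPE1}, \eqref{esnr2}, \eqref{cp4} that substitute for Poincar\'e are formulated on the whole plane. None of these transfers to the truncated Dirichlet problems with $R$-independent constants without proof, and your proposal simply asserts that the lower-order bounds of Theorem \ref{th0} are ``already available uniformly in $R$.'' Within the paper's architecture that assertion is also circular, since Theorems \ref{th0}--\ref{th1} are themselves deduced from Theorem \ref{th2} (classical solutions for mollified data), not the other way around.

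The second missing ingredient is the mechanism by which the largeness of $\nu$ is actually used. In the paper the density bound is not a given: one assumes the bootstrap hypothesis \eqref{cp051} on a maximal interval, shows via Lemma \ref{l5} that for $\nu\ge\nu_1$ (resp.\ $\nu_2$) it self-improves to \eqref{cp052}, defines $T^*=\sup\{T:\eqref{cp051}\ \text{holds}\}$, and rules out $T^*<T$ by re-applying the local existence theorem at time $T^*$ with compatibility data $g_3=(\sqrt{\n}\,\du)(\cdot,T^*)$ --- which requires checking the time-continuity $\sqrt{\n}\,\du\in C([\tau,T_0];L^2)$ and $\n\in C([0,T_0];W^{2,q})$ as in \eqref{y523}--\eqref{y526}. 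Your sketch compresses all of this into ``a continuation argument extends each approximate classical solution to $[0,T]$,'' treating \eqref{wsol2} as already secured; that skips the open--closed argument that is the heart of the proof and the only place the threshold $\nu_1,\nu_2$ enters. To repair the proposal you would either have to prove the uniform-in-$R$ versions of the Section 3 estimates for the Dirichlet approximations (including a substitute for the Hardy--BMO step and for \eqref{p2}), or abandon the domain truncation and, as the paper does, work directly on $\rr$ with the local classical theory of \cite{LLL,LZ} plus the bootstrap continuation.
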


\begin{theorem}\la{th3}
In addition to the assumptions in Theorem $\ref{th1}$, 
assume further that there exists some point $x_0 \in \rr$ such that $\n_0(x_0)=0$.
Then the unique global strong solution $(\n,u)$ obtained in Theorem \ref{th1} satisfies, for any $r>2$,
\be\la{pbu02}\ba
\lim_{t \to \infty} \| \na \n(\cdot ,t) \|_{L^r} = \infty.
\ea\ee
\end{theorem}

\begin{remark}\la{lrk1}
When $\tilde{\n}=0$, since $(\n,u)$ satisfies \eqref{ssol4} and \eqref{csol3}, similar to the arguments in \cite[Remark 1.1]{LLL},
the solution in Theorem \ref{th2} is in fact a classical solution 
to system \eqref{ns}--\eqref{i3} in $\rr \times (0,\infty)$.

On the other hand, when $\tilde{\n}>0$, it follows from $q>2$, \eqref{ssol5} and \eqref{csol4} that
\be\la{csol5}\ba
( \rho-\tilde{\n},P(\n) -P(\tilde{\n}) ) \in C([0,T];W^{2,q})\hookrightarrow C\left([0,T];C^1(\rr) \right).
\ea\ee
In addition, we deducce from \eqref{ssol5}, \eqref{csol4} 
and the standard embedding that for any $0<\tau<T<\infty$,
\be\la{csol6}\ba
u\in L^\infty(\tau,T;W^{3,q})\cap H^1(\tau ,T;H^2)\hookrightarrow
C\left([\tau ,T];C^2(\rr) \right),
\ea\ee
and	
\be\la{csol7}\ba
u_t\in L^\infty(\tau ,T;H^2)\cap H^1(\tau ,T;H^1)\hookrightarrow
C\left([\tau ,T]; C(\rr) \right).
\ea\ee
Then, by virtue of $(\ref{ns})_1$, \eqref{csol5} and \eqref{csol6}, we have
\be\la{csol8}\ba
\n_t = -\n \div u - u \cdot \na \n \in C(\rr \times [\tau,T]).
\ea\ee

The combination of \eqref{csol5}, \eqref{csol6}, \eqref{csol7} and \eqref{csol8}
shows that the solution in Theorem \ref{th2} is a classical solution to the problem \eqref{ns}--\eqref{i3} in $\rr \times (0,T)$.
\end{remark}

We now make some comments on the analysis of this paper.
For initial data satisfying \eqref{ssol1}, \eqref{csol1}, and \eqref{csol2}, the local existence and uniqueness of classical solutions to (\ref{ns})--(\ref{i3}) follow from arguments similar to those in \cite{LLL}.
To extend the local classical solution, the crucial step is to establish a priori estimates in suitably higher-order norms.
Motivated by \cite{HLX2,HLX1}, the key challenge here lies in deriving an upper bound for the density.

In the case of $\tilde{\n}=0$, we rewrite $(\ref{ns})_1$ as (\ref{2cp61}),
where $G$ represents the effective viscous flux (see (\ref{gw}) for the definition).
This formulation reveals that establishing the upper bound for the density crucially depends on estimating the $L^1(0,T;L^\infty)$ norm of $G$.
According to Gagliardo-Nirenberg's inequality and the elliptic structure of $G$ (see (\ref{p1})), we see that controlling the $L^1(0,T;L^\infty)$ norm of $G$ requires estimating $\| \n \du \|_{L^p(\rr)}$ for $2<p<\infty$.
The analysis presents a technical challenge in unbounded domains due to the failure of Poincar\'e's inequality.
To overcome this, motivated by \cite{LX2}, we first use the $L^1$-integrability of the density to establish a time-independent estimate for the $L^2$-norm of the pressure in both space and time (see (\ref{cp032})).
Based on this crucial estimate, we obtain several decay estimates (see (\ref{sjgj01}) and (\ref{pd11})), which play a key role in deriving the upper bound for the density.
Moreover, we use the weighted inequality (\ref{WPE1}) to show that the $L^p$-norm of $\n \du$ can be bounded by the product of $(1+t)^4$ and some function whose $L^2(0,T)$-norm is independent of time (see (\ref{2cp64})).
With these estimates in hand, we carefully derive the time-uniform upper bound of the density.
Then, following the methods in \cite{HL,LX2}, we establish the necessary higher-order estimates.

In the case of $\tilde{\n}>0$, to establish the global existence, we need to derive the time-uniform estimates.
Compared with the case of $\tilde{\n}=0$, the main difficulty is we have no the $L^1$-integrability of the density, which causes we cannot obtain the time-uniform $L^2(\rr \times (0,T))$ estimate for the pressure term $P-P(\tilde{\n})$.
This prevents us from directly establishing the time-uniform $L^\infty(0,T;L^2(\rr))$ estimate for the gradient of the velocity.
Thus, more refined estimates are required.
First, note that the standard energy estimate (\ref{1cp1}) yields the time-uniform $L^\infty(0,T;L^2(\rr))$ estimate for $P-P(\tilde{\n})$ (see (\ref{1cp27})).
Consequently, by following the proof for the case of $\tilde{\n}=0$, we can obtain a uniform estimate for the gradient of the velocity on the short time interval $(0,\min\{1,T\})$ (see (\ref{1cp02})).
Moreover, by introducing the time layer, we also derive the $\nu$-uniform estimate (\ref{1cp002}).
Then, motivated by \cite{LZ}, using the compensated compactness analysis \cite[Theorem II.1]{CLMS} and the $\nu$-uniform estimate (\ref{1cp002}), we establish the time-uniform $L^\infty(\min\{1,T\},T;L^2)$ estimate for the gradient of the velocity (see (\ref{1cp323})).
This combined with (\ref{1cp002}) gives the desired estimate (\ref{1cp03}).
With this time-uniform estimate for the gradient of the velocity at hand, we follow the argument in the case of $\tilde{\n}=0$ to establish the estimates for the material derivatives of the velocity (see (\ref{1cp04})).
Combining these estimates and proceeding as in \cite{LX3}, we obtain the time-uniform upper bound of the density (see Lemma \ref{cpl5} and its proof).
After establishing the upper bound for the density, we follow the method developed in \cite{HL,LLL,HLX3,HLX2} to derive higher-order derivative estimates for the solution.
These estimates enable us to extend the local solution to a global one.

Finally, we consider the singular limit where solutions of the compressible Navier-Stokes equations converge to those of the inhomogeneous incompressible Navier-Stokes equations.
Following the approach in \cite{DM}, we reformulate $(\ref{ns})_2$ as (\ref{cp33}).
The key step in establishing this convergence is to obtain a $\nu$-uniform bound for $\na G$.
By combining the standard elliptic estimates (\ref{p2}) with the a priori estimates (\ref{cp032}) and (\ref{1cpp2}), we show that the $L^2$-norm of $\na G$ is $\nu$-uniformly bounded.
Notably, our approach does not require the initial velocity field to be divergence-free.
The major technical innovation involves the introduction of the time layer $\si$ when estimating the $L^2$-norm of $\sqrt{\n} \dot{u}$.
Applying the standard compactness arguments, we prove that as the bulk viscosity tends to infinity, the solutions of the compressible Navier-Stokes equations converge to those of the inhomogeneous incompressible Navier-Stokes equations.

The rest of this paper is organized as follows: In Section 2, we present some fundamental inequalities and known facts.
In Sections 3 and 4, we derive some necessary a priori estimates for classical solutions.
Finally, the main results, Theorems \ref{th0}--\ref{th3}, are proved in Section 5.

\section{Preliminaries}
In this section, we will recall some known facts and elementary inequalities
which will be used frequently later.

First, we have the following local existence theory of the classical solution, 
and its proof can be found in \cite{LLL,LZ}.
\begin{lemma}\la{lct}
Assume $\left( \n _0 ,u_0 \right)$ satisfies \eqref{ssol1}, \eqref{csol1} and \eqref{csol2}.
Then there is a small time $T>0$,
such that there exists a unique classical solution $(\n,u)$ to the problem 
\eqref{ns})--\eqref{i3} in $\rr \times (0,T]$ and
when $\tilde{\n}=0$, $(\n,u)$ satisfies \eqref{ssol4} and \eqref{csol3};
when $\tilde{\n}>0$, $(\n,u)$ satisfies \eqref{ssol5} and \eqref{csol4}.
\end{lemma}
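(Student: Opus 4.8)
\emph{Proof proposal.} The plan is to build the local solution by the standard linearization–iteration scheme for compressible flow allowing vacuum, in the spirit of Cho--Choe--Kim and Cho--Kim \cite{CCK,CK,CK2}, in the form adapted to the two-dimensional Cauchy problem in \cite{LLL,LZ}. Since $\n_0$ may vanish on open sets and $\OM=\rr$ is unbounded, I would regularize twice: replace $\n_0$ by $\n_0+\delta$ with $\delta\in(0,1)$ to remove the vacuum, and solve the problem on an exhausting family of balls $B_R$ with homogeneous Dirichlet data for $u$ and the natural far-field data for $\n$, so that the linearized problem is non-degenerate. The point is then to establish a priori bounds on $B_R\times(0,T)$ that are uniform in $\delta$ and $R$, and to pass to the limits $\delta\to0$ and $R\to\infty$.

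For fixed $\delta,R$ the iteration runs as follows. Given a velocity $v$ with $v(\cdot,0)=u_0$, first solve the transport equation $\n_t+\div(\n v)=0$, $\n(\cdot,0)=\n_0+\delta$, by characteristics: this preserves positivity and gives the pointwise bounds $\underline c\le\n\le\overline c$ with $\overline c\to\|\n_0\|_{L^\infty}+\delta$ as $T\to0$, together with the transport of the spatial regularity of $\n,\nabla^2\n,\nabla^2 P(\n)$ and, in the case $\tilde\n=0$, of the weighted quantities $\bar x^a\n$ and $\bar x^{\delta_0}\nabla^2\n$ from \eqref{ssol1}--\eqref{csol1}. With this $\n$ frozen, solve the linear parabolic system
\[
\n u_t+\n v\cdot\nabla u-\mu\Delta u-(\mu+\lm)\nabla\div u+\nabla P(\n)=0,\qquad u(\cdot,0)=u_0,
\]
for the new velocity $u$; as $\n$ is now bounded above and below, linear parabolic theory puts $u$ in the expected class, and the map $v\mapsto u$ is a contraction in a low-order norm weighted by $\sqrt\n$ (to accommodate the possible vacuum) on a short enough interval. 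Its fixed point solves the nonlinear problem on $B_R\times(0,T_{\delta,R})$.

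The crux is to make $T_{\delta,R}$ and all of the norms in \eqref{ssol4}--\eqref{csol3} (resp. \eqref{ssol5}--\eqref{csol4}) independent of $\delta$ and $R$. I would run the usual chain of estimates: the basic energy identity; the $L^2$ bound at $t=0$ of $\sqrt\n\,u_t$ and $\nabla^2 u$, which is exactly what the compatibility condition \eqref{csol2} delivers (it identifies $\sqrt\n\,u_t(\cdot,0)$, up to a bounded term, with $-g_3\in L^2$), this being the reason $\sqrt\n\,u_t\in L^\infty(0,T;L^2)$ rather than merely $\sqrt t\sqrt\n\,u_t\in L^\infty(0,T;L^2)$; the Hoff-type material-derivative estimates \cite{H1,HL} for $\sqrt\n\,\du$ and $\nabla\du$; and finally the top-order bounds on $\nabla^2\n$, $\nabla^2 P$, $\nabla^3 u$, $t\sqrt\n\,u_{tt}$, $t\nabla^2 u_t$, and so on. When $\tilde\n=0$ the failure of Poincar\'e's inequality on $\rr$ forces the weight $\bar x$ into the scheme: one propagates $\bar x^a\n$ in $L^1\cap H^1\cap W^{1,q}$ and $\bar x^{\delta_0}\nabla^2\n$ in $L^2$, and controls $\|u\|_{L^p}$ and $\|\bar x^{-1}u\|_{L^2}$ through a weighted Poincar\'e-type inequality, exactly as in \cite{LLL}; when $\tilde\n>0$ one uses instead that $\n-\tilde\n\in L^2$ bounds $\|v\|_{L^p}$ for $2\le p<\infty$ by $\|\sqrt\n\,v\|_{L^2}+\|\nabla v\|_{L^2}$, following \cite{LZ}. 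With these uniform bounds, weak and strong compactness allow $\delta\to0$ and $R\to\infty$, and the limit $(\n,u)$ lies in the stated classes; that it is genuinely classical follows from \eqref{csol3}/\eqref{csol4} and Sobolev embeddings, as in Remark \ref{lrk1}.

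Uniqueness I would obtain from a Gronwall estimate on the difference of two solutions with the same data, controlling $\sqrt\n\,(u_1-u_2)$ in $L^2$ and $\n_1-\n_2$ in a weighted norm (when $\tilde\n=0$) or in $L^2$ together with a negative norm (when $\tilde\n>0$) so as to absorb the transport error. The main obstacle throughout is the coexistence of two degeneracies — the vanishing density coefficient in the momentum equation and the loss of the Poincar\'e inequality and of compactness on the unbounded domain $\rr$ — which is precisely why neither the weighted spaces nor the compatibility condition can be dispensed with; once the uniform weighted a priori estimates are in place, the construction and the passage to the limit are routine.
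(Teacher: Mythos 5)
Your outline matches what the paper actually does here: the paper gives no proof of Lemma \ref{lct} at all, but simply invokes the local existence theory of \cite{LLL,LZ}, and those references proceed exactly by the scheme you describe (linearize and iterate, regularize the vacuum by $\n_0+\delta$ and truncate to balls $B_R$, derive weighted a priori estimates uniform in $\delta$ and $R$ using the compatibility condition to control $\sqrt{\n}\,u_t$ at $t=0$, then pass to the limit and prove uniqueness by a Gronwall argument on the difference). So your proposal is essentially the same route as the paper's (cited) proof.
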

Next, the following Gagliardo-Nirenberg's inequalities (see \cite{NI}) will be used frequently later.
\begin{lemma}\la{gn1}
Let $u\in H^1(\rr)$, there exists a positive constant $C$ such that for any $2<p<\infty$
\be\ba\la{gn11}
\| u \|_{L^p} \le Cp^{1/2}\| u \|^{2/p}_{L^2} \| \na u\|^{1-2/p}_{L^2}.
\ea\ee
Furthermore, for $1\le r <\infty$, $2<q<\infty$, 
there exists a positive constant $C$ depending only on $r,\ q $,
such that for every function $v\in L^r(\rr) \cap D^{1,q}(\rr)$ it holds that
\be\ba\la{gn12}
\|v \|_{L^\infty} \le C\|v \|^{r(q-2)/2q+r(q-2)}_{L^r} \|\na v\|^{2q/2q+r(q-2)}_{L^q}.
\ea\ee
\end{lemma}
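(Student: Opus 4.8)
The plan is to reprove the Gagliardo--Nirenberg inequalities \eqref{gn11} and \eqref{gn12} on $\rr$ by the standard interpolation technique, tracking the $p$- and $q$-dependence of the constants carefully since that dependence is what makes the lemma useful later (it is exactly what produces the $p^{1/2}$ factor that drives the $L^1(0,T;L^\infty)$ estimate of $G$ mentioned in the introduction).

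For \eqref{gn11}, I would start from the one-dimensional calculus inequality: for $w\in C_c^\infty(\mr)$ and $s>1$, $|w(x)|^s \le s\int_{-\infty}^x |w|^{s-1}|w'|\,dt$, hence $\|w\|_{L^\infty(\mr)}^s \le s\|w\|_{L^{s-1}(\mr)}^{s-1}\|w'\|_{L^1(\mr)}$ (a sharper route uses $\|w\|_{L^\infty}^s \le \tfrac{s}{2}\int |w|^{s-1}|w'|$). Applying this on horizontal and vertical lines in $\rr$ and multiplying, then integrating in the two variables and using Cauchy--Schwarz and Fubini in the familiar way, one gets for $v\in C_c^\infty(\rr)$ and $s\ge 1$ an estimate of the form $\|v\|_{L^{2s}(\rr)}^{2s}\le C s\,\|v\|_{L^{2(s-1)}(\rr)}^{?}\,\|\na v\|_{L^2(\rr)}^{?}$ with explicit exponents; choosing $s$ in terms of $p$ and then iterating/interpolating the resulting inequalities between $L^2$ and higher powers, together with Young's inequality, yields $\|v\|_{L^p(\rr)}\le C p^{1/2}\|v\|_{L^2}^{2/p}\|\na v\|_{L^2}^{1-2/p}$. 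The exponents $2/p$ and $1-2/p$ are forced by scaling ($v_\lambda(x)=v(\lambda x)$), which I would use as a consistency check and as a shortcut to pin them down without redoing the bookkeeping. Finally a density argument extends the inequality from $C_c^\infty(\rr)$ to all of $H^1(\rr)$.

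For \eqref{gn12}, the scheme is similar but with $L^q$-gradient control. From the same 1D pointwise bound, for $v\in C_c^\infty(\rr)$ one obtains $\|v\|_{L^\infty(\rr)}$ controlled by a product $\|v\|_{L^r}^{\theta}\|\na v\|_{L^q}^{1-\theta}$; the scaling $v_\lambda(x)=v(\lambda x)$ again fixes the exponents, giving $\theta = \frac{r(q-2)}{2q+r(q-2)}$ and $1-\theta=\frac{2q}{2q+r(q-2)}$, matching \eqref{gn12}. Then one removes the compact-support hypothesis: for general $v\in L^r(\rr)\cap D^{1,q}(\rr)$ with $q>2$, Morrey's embedding gives $v\in C(\rr)$ and $v(x)\to 0$ as $|x|\to\infty$ (using $L^r$-integrability to kill the constant), so a cutoff/truncation argument transfers the inequality to $v$, with the constant depending only on $r$ and $q$ as claimed.

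The main obstacle is \textbf{not} the soft structure of the proof — it is the careful tracking of the constant's dependence on $p$ (resp.\ $q$): a crude iteration loses the sharp $p^{1/2}$ growth, which would be useless downstream, so the interpolation between $L^2$ and $L^{2k}$ must be organized so that the multiplicative constants telescope rather than compound. I would handle this by proving a single clean inductive inequality of the form $\|v\|_{L^{2k}}\le C^{1-1/k}\,k^{1/2}\,\|v\|_{L^2}^{1/k}\|\na v\|_{L^2}^{1-1/k}$ with $C$ independent of $k$, then filling in non-even exponents $p$ by a further interpolation between consecutive even integers. The boundary/support issues in the unbounded-domain setting are genuine but routine, dispatched by density (for \eqref{gn11}) and by the decay-at-infinity that $q>2$ and $v\in L^r$ jointly furnish (for \eqref{gn12}).
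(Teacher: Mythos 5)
The paper itself offers no proof of this lemma -- it is quoted from Nirenberg's classical work (reference \cite{NI}) -- so the only question is whether your sketch would actually establish the stated inequalities. For \eqref{gn12} your plan (an inhomogeneous local bound of Morrey type, then the scaling $v_\lambda(x)=v(\lambda x)$ optimized in $\lambda$ to force the exponents $\theta=\frac{r(q-2)}{2q+r(q-2)}$, plus the decay at infinity furnished by $v\in L^r$ and $q>2$) is standard and workable. The problem is \eqref{gn11}, and specifically the constant: the mechanism you propose cannot produce the factor $p^{1/2}$.

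Concretely, the slicing argument you describe gives, for $w\in C_c^\infty(\rr)$ and integer $s\ge 2$,
\be\nonumber
\|w\|_{L^{2s}}^{2s}\le \Big(\int \sup_{x_1}|w|^s\,dx_2\Big)\Big(\int \sup_{x_2}|w|^s\,dx_1\Big)
\le C s^2\,\|w\|_{L^{2(s-1)}}^{2(s-1)}\,\|\na w\|_{L^2}^2 ,
\ee
(note in passing that your intermediate bound $\int|w|^{s-1}|w'|\le\|w\|_{L^{s-1}}^{s-1}\|w'\|_{L^1}$ is not a valid H\"older step; the usable version is Cauchy--Schwarz, $\le\|w\|_{L^{2(s-1)}}^{s-1}\|\pa_i w\|_{L^2}$). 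Iterating this recursion gives $\|w\|_{L^{2k}}\le C(k!)^{1/k}\|w\|_{L^2}^{1/k}\|\na w\|_{L^2}^{1-1/k}$, and $(k!)^{1/k}\sim k/e$, i.e.\ a constant growing \emph{linearly} in $p$, not like $p^{1/2}$. Your proposed fix, the inductive inequality $\|v\|_{L^{2k}}\le C^{1-1/k}k^{1/2}\|v\|_{L^2}^{1/k}\|\na v\|_{L^2}^{1-1/k}$ with $C$ independent of $k$, cannot be closed by this recursion: inserting the induction hypothesis into the displayed estimate forces $(k+1)^2(Ck)^k\le (C(k+1))^{k+1}$, i.e.\ $C\ge k\left(\frac{k}{k+1}\right)^{k-1}\sim k/e$, so the ``telescoping'' you invoke fails and the constant is again $O(p)$; interpolating $\|w\|_{L^{2(s-1)}}$ back against $\|w\|_{L^{2s}}$ does not help either (it yields $s^{(s-1)/s}$). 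The $p^{1/2}$ growth is correct but requires a genuinely different mechanism, e.g.\ Hausdorff--Young together with the explicit computation $\|(1+|\xi|^2)^{-1/2}\|_{L^{2p/(p-2)}(\rr)}\sim p^{1/2}$ followed by the scaling argument, or the sharp-constant asymptotics of $W^{1,s}(\rr)\hookrightarrow L^{2s/(2-s)}$ as $s\uparrow 2$, or a Trudinger--Moser/symmetrization argument. As written, your proof establishes \eqref{gn11} only with $Cp$ in place of $Cp^{1/2}$, which is weaker than the lemma you are asked to prove (even though the present paper only ever applies \eqref{gn11} with fixed exponents).
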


The following Poincar\'e type inequality can be found in \cite{F}.
\begin{lemma}\la{pt}
Let $v\in H^1(\OM)$, and let $\n$ be a non-negative function satisfying
\be\ba\nonumber
0<M_1\leq \int \n dx,\quad \int \n^\ga dx \leq M_2,
\ea\ee
where $\ga>1$ and $\OM\subset\rr$ is a bounded domain. Then there exists a constant $C$ depending only on 
$M_1,\ M_2$ and $\ga$ such that
\be\ba\la{pt1}
\|v\|_{L^2}^2 \leq C\int \n |v|^2 dx + C \|\na v\|_{L^2}^2.
\ea\ee
\end{lemma}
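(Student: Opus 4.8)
\emph{Proof idea.} The plan is a direct argument: the classical Poincar\'e--Wirtinger inequality on the bounded domain $\OM$ controls $v$ in terms of $\|\na v\|_{L^2}$ and its mean value, and the two integral bounds on $\n$ are then used only to estimate that mean. Write $\ol v := |\OM|^{-1}\int_\OM v\,dx$. Poincar\'e--Wirtinger gives $\|v-\ol v\|_{L^2(\OM)}\le C\|\na v\|_{L^2(\OM)}$, whence by the triangle inequality
\[
\|v\|_{L^2}\le C\|\na v\|_{L^2}+|\OM|^{1/2}\,|\ol v| ,
\]
so everything reduces to bounding $|\ol v|$ by the square root of the right-hand side of \eqref{pt1}.

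For this I would first record two elementary consequences of the hypotheses: by H\"older on $\OM$, $\int_\OM\n\,dx\le|\OM|^{1-1/\ga}\big(\int_\OM\n^\ga dx\big)^{1/\ga}\le|\OM|^{1-1/\ga}M_2^{1/\ga}=:M_3$, so $M_1\le\int_\OM\n\,dx\le M_3$. Using the lower bound,
\[
M_1|\ol v|\le|\ol v|\int_\OM\n\,dx=\Big|\int_\OM\n\,\ol v\,dx\Big|\le\int_\OM\n\,|v-\ol v|\,dx+\int_\OM\n\,|v|\,dx .
\]
The first term I would bound by H\"older with exponents $\ga,\ga'$ ($\ga'=\ga/(\ga-1)$) together with the Sobolev--Poincar\'e inequality (which in $\rr$ holds for every finite exponent, hence for $\ga'$): $\int_\OM\n|v-\ol v|\,dx\le\|\n\|_{L^\ga}\|v-\ol v\|_{L^{\ga'}}\le C\,M_2^{1/\ga}\|\na v\|_{L^2}$. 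The second term I would bound by Cauchy--Schwarz together with $\int_\OM\n\,dx\le M_3$: $\int_\OM\n|v|\,dx\le M_3^{1/2}\big(\int_\OM\n|v|^2dx\big)^{1/2}$.

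Combining these gives $|\ol v|\le C\big(\|\na v\|_{L^2}+(\int_\OM\n|v|^2dx)^{1/2}\big)$ with $C$ depending only on $M_1,M_2,\ga$ and $\OM$; substituting into the first display and squaring via $(a+b)^2\le2a^2+2b^2$ yields \eqref{pt1}. The only slightly delicate point is the estimate of $\int_\OM\n\,|v-\ol v|\,dx$: when $\ga<2$ one has $\ga'>2$, so plain Poincar\'e is insufficient and one genuinely needs the Sobolev--Poincar\'e inequality, which is harmless in two dimensions because $H^1(\OM)\hookrightarrow L^p(\OM)$ for all $p<\infty$. An alternative, non-constructive route is a compactness/contradiction argument: if \eqref{pt1} failed with constant $k$ for each $k$, one obtains admissible weights $\n_k$ and functions $v_k\in H^1(\OM)$ with $\|v_k\|_{L^2}=1$ but $\|\na v_k\|_{L^2}^2+\int_\OM\n_k|v_k|^2dx\to0$; by Rellich a subsequence converges in $L^2$ to a nonzero constant $v$, and after upgrading this convergence to $L^{2\ga'}$ by interpolation one passes to the limit in $\int_\OM\n_k|v_k|^2dx$ to obtain $|v|^2\int_\OM\n_k\,dx\to0$, contradicting $\int_\OM\n_k\,dx\ge M_1>0$.
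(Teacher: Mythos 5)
Your direct argument is correct and complete, so there is nothing to compare it against line by line: the paper does not prove this lemma at all, it simply quotes it from Feireisl's book \cite{F}. Your route (Poincar\'e--Wirtinger to reduce everything to the mean value $\ol v$, then splitting $\rho\,\ol v=\rho(\ol v-v)+\rho v$ and estimating the two pieces by H\"older in $L^\gamma\times L^{\gamma'}$ with Sobolev--Poincar\'e, respectively by Cauchy--Schwarz against $\int\rho\,dx\le |\OM|^{1-1/\ga}M_2^{1/\ga}$) is the standard constructive proof, and you correctly flag the only delicate point, namely that for $\ga<2$ one needs the $L^{\gamma'}$ Sobolev--Poincar\'e inequality rather than the plain $L^2$ one, which is harmless in two dimensions. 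Your alternative compactness argument is also sound (the upgrade of $v_k\to v$ from $L^2$ to $L^{2\gamma'}$ by interpolation with the uniform $H^1$ bound is exactly what is needed to pass to the limit in $\int\rho_k|v_k|^2$), at the price of a non-constructive constant. Two small remarks. First, as you note, the constant inevitably depends on $\OM$ as well (through the Poincar\'e and embedding constants and $|\OM|$); the paper's statement lists only $M_1,M_2,\ga$, but this is an omission of the fixed-domain dependence and is harmless in the way the lemma is applied (on fixed balls). Second, both of your arguments implicitly require $\OM$ to be regular enough (say, a bounded Lipschitz or extension domain, and connected in the compactness argument so that $\na v=0$ forces $v$ constant) for Poincar\'e--Wirtinger, the Sobolev embedding and Rellich compactness to hold; this matches the hypotheses under which the lemma is stated in the cited reference and under which it is used here, but it is worth stating explicitly since ``bounded domain'' alone does not guarantee these inequalities.
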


The following weighted $L^p$ estimates can be found in \cite[Theorem B.1]{L1}.
\begin{lemma}\la{WPE}
For $m \in [2,\infty)$ and $\theta \in (1+\frac{m}{2},\infty)$ 
there exists a positive generic constant $C$, 
such that for any $v \in D^1(\rr)$, we have
\be\la{WPE1}\ba
\left( \int _{{\mathbb {R}^2 }} \frac{|v|^m}{e+|x|^2}(\log (e+|x|^2))^{-\theta }dx \right) ^{1/m}
\le C\Vert v\Vert _{L^2(B_1)}+C\Vert \nabla v\Vert _{L^2({\mathbb {R}^2 }) }.
\ea\ee
\end{lemma}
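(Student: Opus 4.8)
The plan is to decompose $\rr$ into the ball $B_2$ and the dyadic annuli $A_k := B_{2^{k+1}}\setminus B_{2^k}$ for $k\ge 1$, to apply a rescaled Sobolev--Poincar\'e inequality on each piece, and to control the growth of the annular averages of $v$ by a telescoping argument. Throughout write $D := \|v\|_{L^2(B_1)} + \|\na v\|_{L^2(\rr)}$; the aim is to show that the left-hand side of \eqref{WPE1} is bounded by $CD$ with $C = C(m,\theta)$.

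First I would estimate the averages $\bar v_k := |A_k|^{-1}\int_{A_k}v\,dx$ (with the convention $A_0 := B_2\setminus B_1$). Since $A_{k-1}\cup A_k$ is connected of diameter $\sim 2^k$ and measure $\sim 2^{2k}$, the Poincar\'e inequality on that set, after comparing the averages over $A_{k-1}$ and over $A_k$ with the average over the union, gives $|\bar v_k - \bar v_{k-1}|\le C(\|\na v\|_{L^2(A_{k-1})}+\|\na v\|_{L^2(A_k)})$. Telescoping and Cauchy--Schwarz then yield $|\bar v_k|\le |\bar v_0| + C\sqrt{k+1}\,\|\na v\|_{L^2(\rr)}$, while a Poincar\'e inequality on $B_2$ anchored at $B_1$ gives $|\bar v_0|\le C\|v\|_{L^2(B_2)}\le C(\|v\|_{L^2(B_1)}+\|\na v\|_{L^2(B_2)})\le CD$. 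Hence $|\bar v_k|\le C\sqrt{k+1}\,D$ for every $k\ge0$.

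Next, I would rescale each $A_k$ to the unit annulus $A_0$; since $\|\na\cdot\|_{L^2}$ is scale-invariant in two dimensions, the Poincar\'e inequality together with the embedding $H^1(A_0)\hookrightarrow L^m(A_0)$ (valid for all $m<\infty$) gives $\|v-\bar v_k\|_{L^m(A_k)}\le C\,2^{2k/m}\|\na v\|_{L^2(A_k)}$, whence, using $|A_k|\sim2^{2k}$ and the bound on $\bar v_k$, $\|v\|_{L^m(A_k)}^m\le C\,2^{2k}\|\na v\|_{L^2(A_k)}^m + C\,2^{2k}(k+1)^{m/2}D^m$; the same argument on $B_2$ gives $\|v\|_{L^m(B_2)}\le CD$. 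On $A_k$ with $k\ge1$ one has $e+|x|^2\sim2^{2k}$ and $\log(e+|x|^2)\ge 2k\log2$, so the weight there is at most $C2^{-2k}k^{-\theta}$, and on $B_2$ it is at most $e^{-1}$. Summing the contributions, the left side of \eqref{WPE1} raised to the power $m$ is controlled by $CD^m + C\sum_{k\ge1}k^{-\theta}\|\na v\|_{L^2(A_k)}^m + CD^m\sum_{k\ge1}k^{m/2-\theta}$. Because $m\ge2$ and $\sum_k\|\na v\|_{L^2(A_k)}^2\le\|\na v\|_{L^2(\rr)}^2$, one has $\|\na v\|_{L^2(A_k)}^m\le\|\na v\|_{L^2(\rr)}^{m-2}\|\na v\|_{L^2(A_k)}^2$, so the middle sum is $\le\|\na v\|_{L^2(\rr)}^m\le D^m$, and the hypothesis $\theta>1+\frac m2$ makes $\sum_k k^{m/2-\theta}$ converge. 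Taking $m$-th roots finishes the argument.

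The hard part will be the second step: a function in $D^1(\rr)$ has no pointwise decay at infinity, only the square-summability of $\|\na v\|_{L^2(A_k)}$ over $k$, so one must quantify how fast $\bar v_k$ can grow. The bound $|\bar v_k|\le C\sqrt{k+1}\,D$ produced by telescoping and Cauchy--Schwarz is essentially sharp (think of $v\sim\log|x|$), and it is exactly this $\sqrt{k}$ growth, balanced against the $2^{-2k}$ from the weight, that both forces the logarithmic factor $(\log(e+|x|^2))^{-\theta}$ in the statement and pins down the threshold $\theta>1+\frac m2$.
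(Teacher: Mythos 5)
Your proof is correct. Note first that the paper does not actually prove this lemma: it is quoted verbatim from Lions \cite[Theorem B.1]{L1}, so there is no in-paper argument to compare against; your write-up supplies a self-contained proof of a statement the authors delegate to the literature. The dyadic scheme holds together at every step: the Poincar\'e inequality on the doubled annulus $A_{k-1}\cup A_k$ rescales to the fixed annulus $\{1/2\le|y|\le 2\}$ with a $k$-independent constant precisely because $\|\nabla v\|_{L^2}$ is scale-invariant in two dimensions, so the bound $|\bar v_k-\bar v_{k-1}|\le C(\|\nabla v\|_{L^2(A_{k-1})}+\|\nabla v\|_{L^2(A_k)})$ and the telescoped growth $|\bar v_k|\le CD\sqrt{k+1}$ are legitimate; the anchoring of $\bar v_0$ via $\|v\|_{L^2(B_2)}\le C(\|v\|_{L^2(B_1)}+\|\nabla v\|_{L^2(B_2)})$ is the standard compactness (or chaining) Poincar\'e estimate on a fixed domain; the rescaled Sobolev--Poincar\'e bound $\|v-\bar v_k\|_{L^m(A_k)}\le C2^{2k/m}\|\nabla v\|_{L^2(A_k)}$ has the right scaling factor; and the weight on $A_k$ is indeed $\lesssim 2^{-2k}k^{-\theta}$, so the three resulting sums are handled exactly as you say, with $\|\nabla v\|_{L^2(A_k)}^m\le\|\nabla v\|_{L^2}^{m-2}\|\nabla v\|_{L^2(A_k)}^2$ exploiting $m\ge 2$ and disjointness, and $\sum_k k^{m/2-\theta}<\infty$ using $\theta>1+\frac m2$. (One pedantic point you could add: for $v\in D^1(\rr)$, i.e.\ $v\in L^1_{\mathrm{loc}}$ with $\nabla v\in L^2$, the local quantities $\|v\|_{L^2(B_2)}$ and the annular averages are finite by the local embedding $W^{1,1}\hookrightarrow L^2$ in two dimensions, so all terms you manipulate make sense.) Your closing remark is also on target: the example $v\sim\log|x|$ shows the $\sqrt{k}$ growth of the averages is sharp, which is exactly what forces the logarithmic weight and the threshold $\theta>1+\frac m2$ in the statement.
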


Subsequently, the following estimate of $\| \n v\|_{L^r}$
plays an important role in the estimates when $\tilde{\rho}=0$,
and the proof can be found in \cite[Lemma 2.4]{LX2}.
\begin{lemma}\la{esnr}
For $N_*\ge 1$ and positive constants $M_3$, $M_4$, $\beta$, we suppose that $\n$ satisfies
\be\la{esnr1}\ba
0\le \rho \le M_3, \quad M_4\le \int _{B_{N_*}}\rho dx ,\quad{\bar{x}}^\beta \rho \in L^1(\rr).
\ea\ee

Then for any $r \in [2,\infty)$, there exists a positive constant $C$ depending only on $M_3$, $M_4$, $\beta$ and $r$, such that for every 
$v\in \left. \left\{ v\in D^1 ({\rr})\right| \rho^{1/2}v\in L^2(\rr) \right\}$
\be\la{esnr2}\ba
\left( \int _{\mathbb {R}^2}\rho |v |^r dx\right) ^{1/r} 
\le C N_*^3 (1+\Vert {\bar{x}}^\beta \rho \Vert _{L^1(\rr)}) 
\left( \Vert \sqrt{\n} v\Vert _{L^2(\mathbb {R}^2)} 
+ \Vert \nabla v \Vert _{L^2(\mathbb {R}^2)}\right).
\ea\ee
\end{lemma}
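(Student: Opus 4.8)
\emph{Proof proposal.} The plan is to split the argument into two steps (this is essentially the route of \cite[Lemma 2.4]{LX2}): first bound $\|v\|_{L^2(B_{N_*})}$ by a Poincar\'e-type argument that uses only $\int_{B_{N_*}}\n\,dx\ge M_4$ and $0\le\n\le M_3$, and then upgrade this to the weighted $L^r$ estimate by combining the weighted inequality of Lemma \ref{WPE} with the hypothesis ${\bar{x}}^\beta\n\in L^1$.

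\emph{Step 1.} Since $v\in D^1(\rr)$ we have $v\in L^2_{\mathrm{loc}}(\rr)$, hence $v\in H^1(B_{N_*})$. Let $\bar v$ be the mean of $v$ over $B_{N_*}$; the scaled Poincar\'e inequality gives $\|v-\bar v\|_{L^2(B_{N_*})}\le CN_*\|\na v\|_{L^2}$. To bound $|\bar v|$ I would test $\bar v=v+(\bar v-v)$ against $\n$ on $B_{N_*}$ and use $\int_{B_{N_*}}\n\,dx\ge M_4$ together with Cauchy--Schwarz and the crude bounds $\int_{B_{N_*}}\n\,dx\le M_3|B_{N_*}|\le CM_3N_*^2$ and $\n\le M_3$; this yields $M_4|\bar v|\le CN_*\|\n^{1/2}v\|_{L^2}+CN_*^2\|\na v\|_{L^2}$, so $|\bar v|\le CN_*^2(\|\n^{1/2}v\|_{L^2}+\|\na v\|_{L^2})$ with $C=C(M_3,M_4)$. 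Since $|B_{N_*}|^{1/2}\le CN_*$, adding the two estimates gives
\be\la{ps-step1}
\|v\|_{L^2(B_{N_*})}\le CN_*^3\left(\|\n^{1/2}v\|_{L^2}+\|\na v\|_{L^2}\right),\quad C=C(M_3,M_4).
\ee

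\emph{Step 2.} Fix $r\in[2,\infty)$; choose $m>r(1+2/\beta)$ (so in particular $m>r$), then $\theta>1+m/2$, and set $w=(e+|x|^2)(\log(e+|x|^2))^\theta$. Inserting the factor $w^{r/m}$ next to $\n$ and its reciprocal next to $|v|^r$, and applying H\"older's inequality with exponents $m/r$ and $m/(m-r)$,
\bnn
\int\n|v|^r\,dx &\le& \left(\int\frac{|v|^m}{(e+|x|^2)(\log(e+|x|^2))^\theta}\,dx\right)^{r/m} \\
&&\times\left(\int\n^{\frac{m}{m-r}}(e+|x|^2)^{\frac{r}{m-r}}(\log(e+|x|^2))^{\frac{\theta r}{m-r}}\,dx\right)^{\frac{m-r}{m}}.
\enn
By Lemma \ref{WPE} the first factor is at most $\big(C\|v\|_{L^2(B_1)}+C\|\na v\|_{L^2}\big)^r$. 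For the second factor I would use $\n^{m/(m-r)}\le M_3^{r/(m-r)}\n$ and note that, since $r/(m-r)<\beta/2$, the weight $(e+|x|^2)^{r/(m-r)}(\log(e+|x|^2))^{\theta r/(m-r)}$ is dominated on all of $\rr$ by a constant multiple of ${\bar{x}}^\beta$ (the quotient of the two is continuous and tends to $0$ as $|x|\to\infty$, hence is bounded), so the second factor is at most $C\|{\bar{x}}^\beta\n\|_{L^1}^{(m-r)/m}$. Taking $r$-th roots, absorbing $M_3$ into $C$, and using $t^s\le1+t$ with $s=(m-r)/(mr)\le1$ gives $\big(\int\n|v|^r\,dx\big)^{1/r}\le C(1+\|{\bar{x}}^\beta\n\|_{L^1})(\|v\|_{L^2(B_1)}+\|\na v\|_{L^2})$ with $C=C(M_3,\beta,r)$. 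Since $N_*\ge1$ we have $B_1\subset B_{N_*}$, so substituting \eqref{ps-step1} and absorbing the leftover $\|\na v\|_{L^2}$ (legitimate as $N_*^3\ge1$) completes the proof, with final constant $C=C(M_3,M_4,\beta,r)$.

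\emph{Expected main obstacle.} Apart from this bookkeeping the argument is routine (H\"older's inequality, the scaling of the Poincar\'e constant, and the elementary inequality $t^s\le1+t$). The one point that needs care is the simultaneous choice of $m$ and $\theta$ in Step 2: $\theta>1+m/2$ is required for Lemma \ref{WPE} to apply, while the leftover weight must remain below ${\bar{x}}^\beta$. Taking $m>r(1+2/\beta)$ forces the exponent $r/(m-r)$ of $(e+|x|^2)$ in that weight to be strictly below $\beta/2$, which makes the logarithmic factors and the precise value of $\theta$ irrelevant and thereby settles the comparison; it is also worth recording separately the intermediate estimate \eqref{ps-step1}, since it is the form in which the weighted $L^2$-control on a fixed ball is used elsewhere.
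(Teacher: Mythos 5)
Your proof is correct: Step 1 gives exactly the $N_*^3$-weighted control of $\|v\|_{L^2(B_{N_*})}$ via the scaled Poincar\'e inequality and the mass lower bound, and Step 2 correctly combines Lemma \ref{WPE} with H\"older's inequality, the choice $m>r(1+2/\beta)$ ensuring $r/(m-r)<\beta/2$ so the leftover weight is dominated by ${\bar{x}}^\beta$. The paper itself gives no proof here (it cites \cite[Lemma 2.4]{LX2}), and your argument is essentially the standard one from that reference, so nothing further is needed.
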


The following estimate of $\|  u\|_{L^2}$
is of great significance in the estimations when $\tilde{\rho}>0$,
and the proof can be found in \cite[Lemma 2.5]{LZ}.
\begin{lemma}\la{LZ1}
If $\tilde{\rho}>0$ and $(\n,v)$ satisfies
\be\la{LZ01}\ba
\n-\tilde{\rho} \in L^2, \quad v \in D^1, \quad \sqrt{\n}v \in L^2.
\ea\ee
Then, there exists a positive constant $C$ depending only on $\tilde{\rho}$ such that
\be\la{LZ02}\ba
\| v \|^2_{L^2} \le C \left( \Vert \sqrt{\rho} v \Vert^2_{L^2}+\| \n-\tilde{\rho} \|^2_{L^2} \| \na v \|^2_{L^2} \right).
\ea\ee
\end{lemma}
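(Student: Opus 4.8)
The plan is to read off \eqref{LZ02} from the pointwise identity $\tilde{\n}|v|^2 = \n|v|^2 + (\tilde{\n}-\n)|v|^2$, absorbing the cross term by the two-dimensional Ladyzhenskaya interpolation inequality. Integrating over $\rr$ and using Hölder's inequality gives
\[
\tilde{\n}\|v\|_{L^2}^2 = \|\sqrt{\n}\,v\|_{L^2}^2 + \int (\tilde{\n}-\n)|v|^2\,dx \le \|\sqrt{\n}\,v\|_{L^2}^2 + \|\n-\tilde{\n}\|_{L^2}\|v\|_{L^4}^2 .
\]
Then I would apply \eqref{gn11} with $p=4$, i.e. $\|v\|_{L^4}^2 \le C\|v\|_{L^2}\|\na v\|_{L^2}$, followed by Young's inequality to absorb $\tfrac{\tilde{\n}}{2}\|v\|_{L^2}^2$ onto the left-hand side, which yields
\[
\tfrac{\tilde{\n}}{2}\|v\|_{L^2}^2 \le \|\sqrt{\n}\,v\|_{L^2}^2 + \tfrac{C^2}{2\tilde{\n}}\|\n-\tilde{\n}\|_{L^2}^2\|\na v\|_{L^2}^2 .
\]
Dividing by $\tilde{\n}/2$ produces exactly \eqref{LZ02} with a constant depending only on $\tilde{\n}$.

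The only delicate point, and what I expect to be the main obstacle, is that the computation above tacitly assumes $v\in L^2(\rr)$, whereas \eqref{LZ01} only provides $v\in D^1$ together with $\sqrt{\n}\,v\in L^2$ and $\n-\tilde{\n}\in L^2$; a generic function in $D^1(\rr)$ need not be globally square-integrable, and \eqref{gn11} itself is stated for $H^1$ functions. I would handle this by first proving the inequality for $v\in C_c^\infty(\rr)$, where every quantity is finite and the chain of estimates above is valid verbatim, and then passing to the general case by a density/truncation argument: $C_c^\infty(\rr)$ is dense in $D^1(\rr)$ modulo additive constants, and the constants are excluded here because $\n$ stays bounded away from zero off a set of finite measure (since $\n-\tilde{\n}\in L^2$ and $\tilde{\n}>0$), so $\sqrt{\n}\,v\in L^2$ rules out a nonzero limiting constant. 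Concretely one localizes with cut-offs, observes that $\int_{\{\n\ge\tilde{\n}/2\}}|v|^2\le\tfrac{2}{\tilde{\n}}\|\sqrt{\n}\,v\|_{L^2}^2$ while $|\{\n<\tilde{\n}/2\}|\le\tfrac{4}{\tilde{\n}^2}\|\n-\tilde{\n}\|_{L^2}^2<\infty$, and uses Gagliardo--Nirenberg on balls to bound $\|v\|_{L^4}$ uniformly, thereby first establishing $v\in L^2(\rr)$ and then passing to the limit in the inequality.

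Once the integrability issue is settled, what remains is the elementary two-line computation above; alternatively, one may simply invoke the corresponding argument in \cite[Lemma 2.5]{LZ}.
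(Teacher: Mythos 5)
The paper does not prove this lemma at all: it is quoted verbatim from the literature, with the proof deferred to \cite[Lemma 2.5]{LZ}, so there is no in-paper argument to compare against. Your core chain --- splitting $\tilde{\n}|v|^2=\n|v|^2+(\tilde{\n}-\n)|v|^2$, H\"older, \eqref{gn11} with $p=4$, and Young's inequality to absorb $\tfrac{\tilde{\n}}{2}\|v\|_{L^2}^2$ --- is exactly the standard proof of \eqref{LZ02} and is correct \emph{once} one knows $\|v\|_{L^2}<\infty$, since then $v\in L^2\cap D^1=H^1$ and every term is finite.

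You correctly identify that the a priori finiteness of $\|v\|_{L^2}$ is the only delicate point, but the concrete mechanism you sketch for it is the weak spot. ``Gagliardo--Nirenberg on balls to bound $\|v\|_{L^4}$ uniformly'' is circular: on a ball it controls $L^4$ by the full $H^1$ norm, which contains precisely the local $L^2$ quantity you are trying to bound, and the bad set $\{\n<\tilde{\n}/2\}$, though of finite measure, may escape to infinity, where a $D^1(\rr)$ function can grow like a power of $\log|x|$; for the same reason a plain cut-off $\varphi_R$ produces an error $R^{-1}\|v\|_{L^2(B_{2R}\setminus B_R)}$ which, by the weighted bound \eqref{WPE1}, is only $O\bigl((\log R)^{\theta/2}\bigr)$ and does not obviously stay bounded. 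Two ways to close it: (i) tile $\rr$ by unit squares $Q$; since $|\{\n<\tilde{\n}/2\}|\le \tfrac{4}{\tilde{\n}^2}\|\n-\tilde{\n}\|_{L^2}^2<\infty$, all but finitely many squares satisfy $|Q\cap\{\n\ge\tilde{\n}/2\}|\ge \tfrac12$, and a relative Poincar\'e inequality (mean over $Q\cap\{\n\ge\tilde{\n}/2\}$) gives $\|v\|_{L^2(Q)}\le C\bigl(\|\na v\|_{L^2(Q)}+\|\sqrt{\n}\,v\|_{L^2(Q)}\bigr)$ with a constant depending only on $\tilde{\n}$; summing over the grid yields $v\in L^2$, after which your two-line computation applies verbatim and no density passage is needed; or (ii) run a logarithmic cut-off together with Lemma \ref{WPE} and a short bootstrap. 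Alternatively, one may of course simply invoke \cite[Lemma 2.5]{LZ}, which is all the paper itself does. With the finiteness step repaired along these lines, your proof is complete and is essentially the argument of the cited reference.
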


Next, for $\na^{\bot}:=(-\pa_2,\pa_1)$, denoting the material derivative 
of $f$ by $\frac{D}{Dt}f=\dot{f}:=f_t + u\cdot\na f$,
we now state standard $L^{p}$-estimate for the following elliptic system derived from the momentum equations in (\ref{ns}): 
\be\ba\la{p1}
\Delta G = \div(\n\dot{u}),\quad \mu \Delta \o = \na^{\bot} \cdot(\n \dot{u}),
\ea\ee
with
\be\ba\la{gw}
G:=(2\mu + \lam)\div u - ( P-P(\tilde{\n}) ),\quad \o :=\pa_1 u^2 - \pa_2 u^1.
\ea\ee

With these notations defined above, we state the following lemma.
\begin{lemma}\la{estg}
Let $(\n,u)$ be a smooth solution of \eqref{ns}. Then for $1<p<\infty$ and positive integer $k \ge 1$, 
there exists a positive constant $C$ depending only on $k,\ p,\ \mu$, such that 	
\be\la{p2}
\|\na^k G\|_{L^p}+\|\na^k \o\|_{L^p} \leq C \| \na ^{k-1}(\n \dot{u}) \|_{L^p}.
\ee
\end{lemma}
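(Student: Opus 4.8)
\textbf{Proof plan for Lemma \ref{estg}.}

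The statement is a standard Calderón–Zygmund estimate for the two elliptic equations in \eqref{p1}, so the plan is to treat both equations on an equal footing and reduce everything to the classical $L^p$-boundedness of second-order Riesz transforms on $\rr$. First I would observe that since $\Delta G = \div(\n\du)$ with $G$ given by \eqref{gw}, differentiating $k-1$ times yields $\Delta(\na^{k-1}G) = \div\bigl(\na^{k-1}(\n\du)\bigr)$, and similarly $\mu\,\Delta(\na^{k-1}\o) = \na^\bot\!\cdot\bigl(\na^{k-1}(\n\du)\bigr)$. Thus it suffices to prove the case $k=1$ with the right-hand side being the divergence (respectively the $\na^\bot\cdot$) of a generic vector field $F \in L^p(\rr)$, namely that the solution $w$ of $\Delta w = \div F$ satisfies $\|\na w\|_{L^p} \le C(p)\|F\|_{L^p}$.

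For that reduction I would write, at least formally, $\na w = \na(-\Delta)^{-1}\div F$, so that the $ij$-component of $\na w$ is $R_i R_j$ applied to the components of $F$, where $R_i = \pa_i(-\Delta)^{-1/2}$ are the Riesz transforms; since each $R_iR_j$ is a Calderón–Zygmund operator bounded on $L^p(\rr)$ for $1<p<\infty$ with norm controlled by a constant depending only on $p$ (and the dimension, here $2$), one gets $\|\na w\|_{L^p}\le C(p)\|F\|_{L^p}$. The same argument applied with $\na^\bot\cdot$ in place of $\div$ handles $\o$, picking up the harmless factor $1/\mu$ and hence the stated dependence on $\mu$. Applying this with $F = \na^{k-1}(\n\du)$ and summing over the (finitely many, dimension-dependent) components of the $k$-th order derivatives gives \eqref{p2} with $C = C(k,p,\mu)$.

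The only genuine subtlety — and the step I would be most careful about — is justifying these manipulations on the whole space $\rr$ rather than invoking a boundary-value version: one must know that the smooth solution $(\n,u)$ under consideration has enough decay (which it does, by the function-space membership recorded in Theorems \ref{th0}--\ref{th2}, e.g. $\na u\in L^2$, $\n\du$ in the relevant $L^p$) so that $G$ and $\o$ are, up to constants, exactly the Riesz-potential representations above and no harmonic-polynomial ambiguity enters. In practice this is standard, so I would simply cite the classical Calderón–Zygmund theory (e.g. Stein) for the $L^p$-boundedness of $R_iR_j$ and remark that on $\rr$ the identities $G = (-\Delta)^{-1}\div(\n\du) + \text{const}$, $\mu\o = (-\Delta)^{-1}\na^\bot\!\cdot(\n\du)$ hold for the decaying solutions at hand; everything else is routine. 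I do not expect any real obstacle beyond this bookkeeping.
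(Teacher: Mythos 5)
Your Calder\'on--Zygmund/Riesz-transform argument is exactly the standard justification of this estimate, and it is correct as written, including the (right) caveat that on the whole plane one must rule out a harmonic ambiguity using the decay/integrability of the solution class so that $\na^k G$ and $\na^k\o$ are genuinely given by $R_iR_j$ acting on $\na^{k-1}(\n\du)$. The paper itself states Lemma \ref{estg} without proof, treating it as this standard elliptic $L^p$ estimate, so your proposal coincides with the intended argument.
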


Next, the following div-curl estimate will be frequently used in later
arguments.
\begin{lemma}\la{dc}
Let $k \ge 1$ be a positive integer and $1<p<\infty$.
Then there exists a positive constant $C$ depending only on $k,\ p$, 
 such that for every $\na u\in W^{k,p}$  it holds that 
\be\ba\la{dc1}
\|\na u\|_{W^{k,p}} \le C\left( \|\div u\|_{W^{k,p}} +\| \o\|_{W^{k,p}} \right).
\ea\ee
\end{lemma}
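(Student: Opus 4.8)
\emph{Proof sketch.} The plan is to reduce \eqref{dc1} to the $L^p(\rr)$-boundedness of the Riesz transforms by means of the elementary pointwise identity
\be
\Delta u=\na\div u+\na^{\bot}\o ,
\ee
valid for every smooth vector field $u=(u^1,u^2)$ on $\rr$ (with $\na^{\bot}=(-\pa_2,\pa_1)$ and $\o=\pa_1u^2-\pa_2u^1$), which one checks by a direct componentwise expansion. The hypothesis $\na u\in W^{k,p}$ guarantees $\div u,\ \o\in W^{k,p}$ together with $\na^{l}u\in L^p$ for all $1\le l\le k+1$, so all the operator identities used below hold genuinely in the $L^p$ sense (any additive polynomial being annihilated by $L^p$-integrability on $\rr$). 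It therefore suffices to bound each $\|\na^{l}u\|_{L^p}$, $1\le l\le k+1$, by the right-hand side of \eqref{dc1} and sum.

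For the orders $2\le l\le k+1$ I would apply the classical estimate $\|\na^{2}v\|_{L^p(\rr)}\le C(p)\|\Delta v\|_{L^p(\rr)}$ — itself a consequence of the identity $\pa_i\pa_jv=-R_iR_j\Delta v$ and the $L^p$-boundedness of $R_iR_j$ — to $v=\na^{l-2}u$, and then insert $\Delta u=\na\div u+\na^{\bot}\o$; since constant-coefficient derivatives commute with the Riesz transforms, this gives
\be\ba
\|\na^{l}u\|_{L^p}&\le C\|\na^{l-2}\Delta u\|_{L^p}\le C\left(\|\na^{l-1}\div u\|_{L^p}+\|\na^{l-1}\o\|_{L^p}\right)\\
&\le C\left(\|\div u\|_{W^{k,p}}+\|\o\|_{W^{k,p}}\right).
\ea\ee
For the lowest order $l=1$ I would build, from the Riesz potentials of $\div u$ and $\o$, an auxiliary field $\ti u=\na\phi+\na^{\bot}\psi$ with $\Delta\phi=\div u$, $\Delta\psi=\o$; then $\div\ti u=\div u$, $\pa_1\ti u^{2}-\pa_2\ti u^{1}=\o$, and $\na\ti u$ is expressed through the second-order Riesz transforms of $\div u$ and $\o$, so that $\|\na\ti u\|_{L^p}\le C(\|\div u\|_{L^p}+\|\o\|_{L^p})$. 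The difference $u-\ti u$ is then both divergence-free and curl-free, hence harmonic componentwise, while $\na(u-\ti u)\in L^p(\rr)$; a Liouville-type argument (a harmonic function lying in $L^p(\rr)$ must vanish identically) forces $\na u=\na\ti u$, which is the desired bound for $l=1$.

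The only point requiring genuine care — and the main obstacle — is this last step: since $u$ itself carries no decay, the singular-integral representation of $\na u$ on the whole plane cannot be read off naively, and must be justified either through the Hodge/Helmholtz decomposition and the Liouville argument just indicated, or by first establishing the representation for the already $L^p$-integrable field $\na u$ and then passing to the limit via mollification. Everything else is routine bookkeeping with standard multiplier bounds and the commutation of derivatives with Riesz transforms.
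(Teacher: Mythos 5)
The paper offers no proof of Lemma \ref{dc}; it is stated as a standard fact, so there is no in-paper argument to compare against line by line. Your proposal is the standard Calder\'on--Zygmund/Riesz-transform proof and it is correct: the two-dimensional identity $\Delta u=\na \div u+\na^{\bot}\o$ checks out componentwise, the orders $2\le l\le k+1$ do follow from $\|\na^{2}v\|_{L^p(\rr)}\le C\|\Delta v\|_{L^p(\rr)}$ applied to $v=\na^{l-2}u$, and for the first-order bound your Helmholtz-type comparison field $\ti u$ together with the Liouville theorem for the harmonic matrix field $\na(u-\ti u)\in L^p(\rr)$ is exactly the right way to kill the harmonic remainder (the example $u=(x_2,x_1)$, with $\div u=\o=0$ but $\na u$ constant, shows some such step is unavoidable and that the hypothesis $\na u\in L^p$ is what saves you). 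The only loose end is the construction of $\phi$ and $\psi$ when $\div u,\ \o$ are merely in $L^p(\rr)$, since the logarithmic Newtonian potential need not converge absolutely; this is harmless and can be repaired either by renormalizing the kernel or, as your mollification remark suggests, by defining the candidate for $\na u$ directly through second-order Riesz transforms of $\div u$ and $\o$ and running the same Liouville argument on the difference, so the gap is cosmetic rather than substantive.
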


To estimate $\| \na u\|_{L^{\infty}}$ and $\| \na \n\|_{L^{q}}$ 
we require the following Beale-Kato-Majda type inequality, 
which was established in \cite{K} when $\div u \equiv 0$. 
For further reference, we direct readers to \cite{BKM,HLX1}.
\begin{lemma}\la{bkm}
For $2<q<\infty$, 
there exists a positive constant $C$ 
depending only on $q $ such that for every function $\na u\in W^{1,q}$, it holds that
\be\ba\la{bkm1}
 \|\na u\|_{L^\infty} \le C \left( \|\div u\|_{L^\infty}+ \|\o\|_{L^\infty} \right)\log \left(e+ \|\na^2 u\|_{L^q} \right)+ C\|\na u\|_{L^2}+C.
\ea\ee
\end{lemma}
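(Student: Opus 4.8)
The final statement to prove is Lemma \ref{bkm}, the Beale–Kato–Majda type inequality

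\be\ba\nonumber
\|\na u\|_{L^\infty} \le C \left( \|\div u\|_{L^\infty}+ \|\o\|_{L^\infty} \right)\log \left(e+ \|\na^2 u\|_{L^q} \right)+ C\|\na u\|_{L^2}+C
\ea\ee

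for $\na u \in W^{1,q}$ with $2<q<\infty$. The plan is to recover $\na u$ from its divergence and curl via a Biot–Savart type representation and then split the resulting singular integral into near, intermediate, and far frequency (or near/far physical) regions. Writing $-\lap u = -\na\div u + \na^\bot \o$ (valid in $\rr$), one has $\na u = \na(-\lap)^{-1}(-\na\div u + \na^\bot\o)$, so $\na u$ is a Calderón–Zygmund singular integral acting on the scalar fields $\div u$ and $\o$. The operator is bounded on every $L^p$, $1<p<\infty$, but not on $L^\infty$; the classical remedy, which I would follow, is to cut the convolution kernel at radius $r_1$ near the singularity and at radius $r_2$ far away.

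Concretely, I would fix a smooth cutoff and decompose the kernel $K$ (homogeneous of degree $-2$ in $\rr$) as $K = K\chi_{\{|y|\le r_1\}} + K\chi_{\{r_1<|y|\le r_2\}} + K\chi_{\{|y|>r_2\}}$. First, for the far piece $|y|>r_2$, since $|K(y)|\lesssim |y|^{-2}$, Hölder with exponents $2$ and $2$ against $\na u \in L^2$ gives a bound $\lesssim r_2^{-1}\|\na u\|_{L^2}$, so choosing $r_2$ a fixed constant (say $r_2 = 1$) controls this by $C\|\na u\|_{L^2}$. Second, for the intermediate annulus $r_1<|y|\le r_2$, the $L^1$ norm of $K$ on this shell is $\lesssim \log(r_2/r_1)$, so this term is bounded by $C\log(1/r_1)\big(\|\div u\|_{L^\infty}+\|\o\|_{L^\infty}\big)$. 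Third, for the near piece $|y|\le r_1$, Hölder against $\na(\div u,\o)\in L^q$ with the kernel in $L^{q/(q-1)}$ on the ball gives a bound $\lesssim r_1^{1-2/q}\,(\|\na\div u\|_{L^q}+\|\na\o\|_{L^q}) \lesssim r_1^{1-2/q}\|\na^2 u\|_{L^q}$ after the div–curl estimate (Lemma \ref{dc}). (One must account for the cutoff being applied to a homogeneous kernel — there are boundary commutator terms from differentiating the cutoff, which are themselves integrable and handled like the intermediate piece.) Adding the three contributions,
\be\ba\nonumber
\|\na u\|_{L^\infty} \le C\big(\|\div u\|_{L^\infty}+\|\o\|_{L^\infty}\big)\big(1+\log(1/r_1)\big) + C r_1^{1-2/q}\|\na^2 u\|_{L^q} + C\|\na u\|_{L^2}.
\ea\ee
Optimizing in $r_1$ by choosing $r_1^{1-2/q}\|\na^2 u\|_{L^q}\sim 1$, i.e. $r_1 \sim (e+\|\na^2 u\|_{L^q})^{-q/(q-2)}$, turns $\log(1/r_1)$ into $\frac{q}{q-2}\log(e+\|\na^2 u\|_{L^q})$ and yields the claimed inequality with $C=C(q)$; if $r_1$ computed this way exceeds $r_2=1$ then $\|\na^2 u\|_{L^q}$ is bounded by an absolute constant and the estimate is trivial from Lemma \ref{dc} and Gagliardo–Nirenberg.

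The main obstacle I anticipate is not the kernel decomposition itself but making the representation $\na u = \na(-\lap)^{-1}(-\na\div u+\na^\bot\o)$ rigorous on $\rr$ for a general $\na u\in W^{1,q}$ that need not decay nicely, and simultaneously keeping the dependence of all constants only on $q$ (not on $u$). One clean way around this, and the route I would take to keep the write-up short, is to invoke that this statement is essentially classical: the estimate is precisely of the form proved by Kozono–Taniuchi \cite{K} for the curl and extended in \cite{BKM,HLX1} to include the divergence, so one may either cite it directly or reproduce the cutoff argument above; the only two-dimensional specific ingredients are the logarithmic size of the $L^1$ mass of the $|y|^{-2}$ kernel on an annulus and the scaling exponent $1-2/q$ from Hölder, both of which are elementary. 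The div–curl control of $\|\na^2 u\|_{L^q}$ by $\|\na\div u\|_{L^q}+\|\na\o\|_{L^q}$ is supplied by Lemma \ref{dc}, so no extra work is needed there.
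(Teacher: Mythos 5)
Your argument is correct, and it supplies more than the paper itself does: the paper states Lemma \ref{bkm} without proof, citing Kato \cite{K} for the case $\div u\equiv 0$ and \cite{BKM,HLX1} for the general form, which is precisely the fallback you mention at the end. Your kernel decomposition is the standard argument behind those references, and the exponents check out: with $r_2=1$ the far piece is $C\|\na u\|_{L^2}$ because the degree $-2$ kernel has $L^2$-norm of order $r_2^{-1}$ outside $B_{r_2}$; the annulus contributes the factor $\log(1/r_1)$ times $\|\div u\|_{L^\infty}+\|\o\|_{L^\infty}$; the near piece is of size $r_1^{1-2/q}\|\na^2u\|_{L^q}$ (using Lemma \ref{dc}), and the choice $r_1\sim(e+\|\na^2u\|_{L^q})^{-q/(q-2)}$ gives the stated inequality with $C=C(q)$, the degenerate case $r_1>r_2$ being trivial as you note. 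Two small remarks. First, with the paper's conventions ($\na^{\bot}=(-\pa_2,\pa_1)$, $\o=\pa_1u^2-\pa_2u^1$) the identity reads $\lap u=\na\div u+\na^{\bot}\o$, so your sign on the $\na^{\bot}\o$ term is off; this is harmless since only absolute values enter. Second, your near-region bound tacitly integrates by parts once: the degree $-2$ kernel is not locally in $L^{q/(q-1)}$, so one must trade it for the degree $-1$ kernel tested against $\na(\div u,\o)\in L^q$, and it is this step (together with the cutoff) that produces the boundary/commutator terms you mention, which are indeed controlled like the intermediate piece by $\|\div u\|_{L^\infty}+\|\o\|_{L^\infty}$. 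The only remaining routine point is justifying the representation formula for a general $\na u\in W^{1,q}\cap L^2$ on $\rr$ (approximation by smooth, nicely decaying fields); flagging it and deferring to \cite{K,BKM,HLX1}, exactly as the paper does, is an acceptable way to close that step.
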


\section{A Priori Estimates (\uppercase\expandafter{\romannumeral1}): Lower Order Estimates}
In this section, we establish some necessary a priori estimates.
Let $(\n,u)$ be a classical solution to (\ref{ns})--(\ref{i3}) on $\rr \times (0,T]$ obtained by Lemma \ref{lct}.

We set
\be\la{defa1}\ba
A_1^2(t)\triangleq \int \left( \mu \o^2(t)+\frac{G^2(t)}{2\mu+\lambda} \right) dx,
\ea\ee
and
\be\la{defa2}\ba
A_2^2(t)\triangleq\int \rho(t)|\dot{u}(t)|^2 dx.
\ea\ee

\subsection{Far-field density is vacuum}
In this subsection, we assume that $\tilde{\n}=0$, and that the initial data $(\n_0,u_0)$ satisfy (\ref{wsol1}) and $(\ref{wsol01})_1$ with $\n_0>0$.

The primary aim of this subsection is to derive the following a priori estimates.
\begin{proposition}\la{l5}
There exists a positive constant $\nu_1$ depending only on
$N_0$, $\ga$, $\mu$, $a$, $E_0$, $\|{\bar{x}}^a \n_0\|_{L^1}$, $\|\n_0\|_{L^\infty}$, and $\| \na u_0\|_{L^2}$
such that if $(\n,u)$ satisfies
\be\ba\la{cp051}
\sup_{0\leq t\leq T} \|\n\|_{L^\infty} 
\leq 2 \left( 1 + \| \n_0 \|_{L^\infty} \right),
\ea\ee
then
\be\ba\la{cp052}
\sup_{0\leq t\leq T} \|\n\|_{L^\infty} 
\leq \frac{3}{2} \left( 1 + \| \n_0 \|_{L^\infty} \right),
\ea\ee
provided $\nu \geq \nu_1$.
\end{proposition}

The proof of Proposition \ref{l5} will be postponed to the end of this subsection.

We first state the standard energy estimate.
\begin{lemma}\la{l1}
There exists a positive constant $C$ depending only on
$\ga$, $\mu$, and $E_0$ such that
\be\ba\la{cp1}
\sup_{0\leq t\leq T} \int \left( \rho |u|^2 + \n^\ga \right) dx
+ \int_0^T \int \left( \mu |\na u|^2 + \nu (\div u)^2 \right) dx dt \leq C.
\ea\ee
\end{lemma}
\begin{proof}
Multiplying $(\ref{ns})_2$ by $u$, integrating the resulting equation over $\rr$, and using $(\ref{ns})_1$, we obtain (\ref{cp1}).
\end{proof}

\begin{lemma}\la{l3}
Let $(\n,u)$ be a classical solution to \eqref{ns}--\eqref{i3} satisfying \eqref{cp051}.
Then there exists a positive constants $C$ depending only on
$\ga$, $\mu$, $E_0$, and $\| \n_0 \|_{L^\infty}$ such that
\be\la{cp031}\ba
\sup_{0\le t\le T} A^2_1 + \int_0^T \left( \| \sqrt{\n} \dot{u}\|^2_{L^2} + \frac{1}{\nu} \| P \|^2_{L^2} \right) dt
\le C( 1 + A^2_1(0)),
\ea\ee
and
\be\la{cp032}\ba
\sup_{0\le t\le T} \si A^2_1 + \int_0^T \si \left( \| \sqrt{\n} \dot{u}\|^2_{L^2} + \frac{1}{\nu} \| P \|^2_{L^2} \right) dt
\le C,
\ea\ee
with
\be\nonumber
\si(t) \triangleq \min\{1,t\}.
\ee
\end{lemma}

\begin{proof}
First, from $(\ref{ns})_2$ and (\ref{gw}), we deduce that
\be\la{cp21}\ba
P=(-\Delta )^{-1} \mathrm{div } (\rho {\dot{u}} )+\nu \mathrm{div } u.
\ea\ee
Multiplying (\ref{cp21}) by $P$ and integrating over $\rr$, after using Sobolev's and H\"older's inequalities, we derive
\be\nonumber\ba
\int P^2dx & \le \Vert (-\Delta )^{-1} \mathrm{div } (\rho {\dot{u}} )\Vert _{L^{4\gamma }} \Vert P\Vert _{L^{4\gamma /(4\gamma -1)}} +\nu \| \div u\|_{L^2} \Vert P\Vert _{L^2} \\ 
& \le C\Vert \rho {\dot{u}} \Vert _{L^{4\gamma /(2\gamma +1)}}\Vert \rho \Vert _{L^1}^{1/2}\Vert \rho \Vert _{L^{2\gamma }}^{\gamma -1/2}+\nu \| \div u\|_{L^2} \Vert P\Vert _{L^2} \\ 
& \le C \Vert \sqrt{\n}\Vert _{L^{4\gamma }} \Vert \sqrt{\n} {\dot{u}} \Vert _{L^2}\Vert \rho \Vert _{L^1}^{1/2}\Vert \rho \Vert _{L^{2\gamma }}^{\gamma -1/2} +\nu \| \div u\|_{L^2} \Vert P\Vert _{L^2} \\ 
& \le C\Vert P\Vert _{L^2} \Vert \sqrt{\n} {\dot{u}} \Vert _{L^2} +\nu \| \div u\|_{L^2} \Vert P\Vert _{L^2} \\
& \le \frac{1}{2} \Vert P\Vert _{L^2}+C \| \sqrt{\n} \dot{u}\|^2_{L^2}+ \nu^2 \| \div u\|^2_{L^2},
\ea\ee
which gives
\be\ba\la{cp22}
\| P \|^2_{L^2} \le C \| \sqrt{\n} \dot{u}\|^2_{L^2} + 2 \nu^2 \| \div u\|^2_{L^2}.
\ea\ee
Direct calculations show that 	
\be\la{cp31}\ba
\na^{\bot}\cdot \dot u= \frac{D}{Dt}\o +(\p_1u\cdot\na) u^2
-(\p_2u\cdot\na)u^1  = \frac{D}{Dt}\o + \o \div u , 
\ea\ee
and that
\be\la{cp32}\ba 
\div  \dot u&=\frac{D}{Dt}\div u +(\p_1u\cdot\na) u^1+(\p_2u\cdot\na)u^2\\&
=\frac{1}{\nu} \frac{D}{Dt}G+ \frac{1}{\nu} \frac{D}{Dt}( P-P( \tilde{\n} ) )
+ 2\nabla u^1\cdot\nabla^{\perp}u^2 + (\div u)^2.	
\ea\ee

Then, we rewrite $(\ref{ns})_2$ as 
\be\la{cp33}\ba
\n\dot{u} = \na G + \mu\na^{\bot}\o.
\ea\ee
Multiplying both sides of (\ref{cp33}) by $2 \dot u$ and then integrating
the resulting equality over $\rr$ lead to
\be\la{cp34}\ba
& \frac{d}{dt} \int \left(\mu \o^2 + \frac{G^2}{\nu}\right)dx + 2\| \sqrt{\n} \dot{u}\|^2_{L^2}\\
& = -\mu \int \o^2\div udx - 4\int G\nabla u^1\cdot\nabla^{\perp}u^2dx- 2\int G(\div u)^2dx\\
&\quad +\frac{1}{\nu} \int G^2\div udx +\frac{2\ga}{\nu} \int P G\div udx= \sum_{i=1}^5I_i,
\ea\ee
where we have used (\ref{cp31}) and (\ref{cp32}). Next, we estimate each $I_i$ as follows:

First, combining (\ref{gn11}), (\ref{p2}) and H\"older's inequality yields
\be\la{cp35}\ba
|I_1| &\le C \| \o\|^2_{L^4} \| \div u\|_{L^2} \\
& \le C \| \o\|_{L^2} \| \na \o\|_{L^2} \| \div u\|_{L^2} \\
& \le C \| \sqrt{\n} \dot{u}\|_{L^2} \| \o\|_{L^2} \| \div u\|_{L^2} \\
& \le \frac{1}{16} \| \sqrt{\n} \dot{u}\|^2_{L^2} + C \| \na u\|^4_{L^2}.
\ea\ee
Next, we will use the idea in \cite{HL2} to estimate $I_2$. Observe that 
\be\la{cp36}\ba
\na^{\bot} \cdot \na u^1=0,\quad \div \na^{\bot}u^2=0,
\ea\ee
we can infer from \cite[Theorem II.1]{CLMS} that
\be\la{cp37}\ba
\|\na u^1\cdot \na^{\bot}u^2\|_{\mathcal{H}^1}\le C\|\na u\|_{L^2}^2.
\ea\ee
Based on the fact that $\mathcal{BMO}$ is the dual space of $\mathcal{H}^1$ (see \cite{FC}), we obtain
\be\la{cp38}\ba
|I_2| &\le C \| G\|_{\mathcal{BMO}} \|\na u^1\cdot \na^{\bot}u^2\|_{\mathcal{H}^1} \\
& \le C \| \na G\|_{L^2} \| \na u\|^2_{L^2} \\
& \le C \| \sqrt{\n} \dot{u}\|_{L^2} \| \na u\|^2_{L^2} \\
& \le \frac{1}{16} \| \sqrt{\n} \dot{u}\|^2_{L^2}+ C \| \na u\|^4_{L^2}.
\ea\ee

It follows from (\ref{gn11}), (\ref{p2}), (\ref{gw}), (\ref{cp22}), and H\"older's inequality that
\be\la{cp311}\ba
\sum_{i=3}^5I_i & \le \frac{C}{\nu} \int G^2 |\div u| dx + \frac{C}{\nu}\int P |G| |\div u| dx \\
& \le \frac{C}{\nu} \| G \|^2_{L^4} \| \div u \|_{L^2} + \frac{C}{\nu} \| G \|_{L^2} \| \div u \|_{L^2} \\
& \le \frac{C}{\nu} \| G \|_{L^2} \| \na G \|_{L^2} \| \div u \|_{L^2}
+ C \| \div u \|^2_{L^2} + \frac{C}{\nu} \| P \|_{L^2} \| \div u \|_{L^2} \\
& \le \frac{C}{\nu} \| G \|_{L^2} \| \sqrt{\n} \dot{u} \|_{L^2} \| \div u \|_{L^2}
+ C \| \div u \|^2_{L^2} + \frac{C}{\nu} \| \sqrt{\n} \dot{u} \|_{L^2} \| \div u \|_{L^2} \\
& \le \frac{1}{16} \| \sqrt{\n} \dot{u}\|^2_{L^2}
+ C A^2_1 \| \na u\|^2_{L^2}+C\| \na u\|^2_{L^2}.
\ea\ee
Putting (\ref{cp35}), (\ref{cp38}) and (\ref{cp311}) into (\ref{cp34}) implies that
\be\la{cp312}\ba
\frac{d}{dt} A^2_1 + \| \sqrt{\n} \dot{u}\|^2_{L^2}
\le C A^2_1 \| \na u\|^2_{L^2}+C\| \na u\|^2_{L^2}.
\ea\ee
where we have used the following fact:
\be\la{cp39}\ba
\nu \| \div u \|^2_{L^2} + \| \na u\|^2_{L^2} & \le C \left( \nu \| \div u\|^2_{L^2}+\| \o \|^2_{L^2} \right) \\
& \le CA^2_1+\frac{C}{\nu} \left( \| G \|^2_{L^2}+\| P \|^2_{L^2} \right) \\
& \le C( A^2_1 + \frac{1}{\nu} \| P \|^2_{L^2} ),
\ea\ee
due to (\ref{dc1}) and (\ref{cp051}).

In addition, multiplying (\ref{cp312}) by $\si$ results in
\be\la{cp313}\ba
\frac{d}{dt} (\si A^2_1) + \si \| \sqrt{\n} \dot{u}\|^2_{L^2}
\le \si' A^2_1 +C \si A^2_1 \| \na u\|^2_{L^2} + C \| \na u\|^2_{L^2}.
\ea\ee
Consequently, by using (\ref{cp1}) and Gr\"onwall's inequality,
we derive (\ref{cp031}) and (\ref{cp032}).
This completes the proof of Lemma \ref{l3}.
\end{proof}

\begin{lemma}\la{sjgjl}
Let $(\n,u)$ be a classical solution to \eqref{ns}--\eqref{i3} satisfying \eqref{cp051}.
Then there exists a positive constants $C$ depending only on
$\ga$, $\mu$, $E_0$, and $\| \n_0 \|_{L^\infty}$ such that
\be\la{sjgj01}\ba
\sup_{\si(T) \le t\le T} t \left( \| \na u \|^2_{L^2} + \nu \| \div u \|^2_{L^2} + \frac{1}{\nu} \| P \|^2_{L^2} \right)
+ \int_{\si(T)}^T t \left( \| \sqrt{\n} \dot{u}\|^2_{L^2} + \frac{1}{\nu^2} \| P \|^3_{L^3} \right) dt
\le C,
\ea\ee
and
\be\la{sjgj02}\ba
\sup_{\si(T) \le t\le T} \left( \frac{1}{\nu^2} t^2 \| P \|_{L^3}^3 \right) + \int_{\si(T)}^T \frac{1}{\nu^3} t^2 \| P \|^4_{L^4} dt
\le C.
\ea\ee
\end{lemma}
\begin{proof}
It follows from (\ref{cp34}), (\ref{cp35}), (\ref{cp38}), (\ref{cp22}), (\ref{gw}), and Young's inequality that
\be\la{sjgj1}\ba
\frac{d}{dt} A^2_1 + \frac{3}{2} \| \sqrt{\n} \dot{u} \|^2_{L^2}
& \le C \| \na u\|^4_{L^2}
+ \frac{C}{\nu} \int G^2 |\div u| dx + \frac{C}{\nu}\int P |G| |\div u| dx \\
& \le C \| \na u\|^4_{L^2} + \frac{C}{\nu} \| G \|^2_{L^3} \| \div u \|_{L^3} + \frac{C}{\nu} \| P \|_{L^3} \| G \|_{L^3} \| \div u \|_{L^3} \\
& \le C \| \na u\|^4_{L^2} + \frac{C}{\nu^2} \| G \|^3_{L^3} + \frac{2\ga -1}{4\nu^2} \| P \|^3_{L^3} \\
& \le C \| \na u\|^4_{L^2} + \frac{C}{\nu^2} \| G \|^2_{L^2} \| \na G \|_{L^2} + \frac{2\ga -1}{4\nu^2} \| P \|^3_{L^3} \\
& \le C \| \na u\|^4_{L^2} + \frac{C}{\nu^2} \| G \|^2_{L^2} \| \sqrt{\n} \dot{u} \|_{L^2} + \frac{2\ga -1}{4\nu^2} \| P \|^3_{L^3} \\
& \le \frac{1}{4} \| \sqrt{\n} \dot{u}\|^2_{L^2} + C \| \na u\|^4_{L^2}
+ \frac{C}{\nu^4} \| G \|^4_{L^2} + \frac{2\ga -1}{4\nu^2} \| P \|^3_{L^3},
\ea\ee
which yields
\be\la{sjgj2}\ba
\frac{d}{dt} A^2_1 + \frac{5}{4} \| \sqrt{\n} \dot{u}\|^2_{L^2}
\le C \| \na u\|^4_{L^2} + \frac{C}{\nu^2} \| G \|^4_{L^2} + \frac{2\ga -1}{4\nu^2} \| P \|^3_{L^3}.
\ea\ee
Since $P$ satisfies
\be\la{sjgj3}\ba
P_t + u \cdot \na P + \ga P \div u = 0.
\ea\ee
For any $2 \le p < \infty$, multiplying (\ref{sjgj3}) by $p P^{p-1}$, integrating by parts over $\rr$, and using (\ref{gw}), (\ref{gn11}), and Young's inequality, we arrive at
\be\la{sjgj4}\ba
\frac{d}{dt} \| P \|^p_{L^p} + \frac{p\ga -1}{\nu} \| P \|^{p+1}_{L^{p+1}}
& = - \frac{p\ga -1}{\nu} \int P^p G dx \\
& \le \frac{p\ga -1}{2\nu} \| P \|^{p+1}_{L^{p+1}} + \frac{C}{\nu} \| G \|^{p+1}_{L^{p+1}} \\
& \le \frac{p\ga -1}{2\nu} \| P \|^{p+1}_{L^{p+1}}
+ \frac{C}{\nu} \| G \|^{2}_{L^2} \| \sqrt{\n} \dot{u} \|^{p-1}_{L^2},
\ea\ee
which shows that
\be\la{sjgj5}\ba
\frac{d}{dt} \left( \frac{1}{\nu} \| P \|^p_{L^p} \right) + \frac{p\ga -1}{2 \nu^2} \| P \|^{p+1}_{L^{p+1}}
\le \frac{C}{\nu^2} \| G \|^{2}_{L^2} \| \sqrt{\n} \dot{u} \|^{p-1}_{L^2}.
\ea\ee
Choosing $p=2$ in (\ref{sjgj5}) gives
\be\la{sjgj6}\ba
\frac{d}{dt} \left( \frac{1}{\nu} \| P \|^2_{L^2} \right) + \frac{2\ga -1}{2 \nu^2} \| P \|^{3}_{L^{3}}
\le \frac{C}{\nu^2} \| G \|^{2}_{L^2} \| \sqrt{\n} \dot{u} \|_{L^2}
\le \frac{1}{4} \| \sqrt{\n} \dot{u} \|^2_{L^2} + \frac{C}{\nu^4} \| G \|^4_{L^2}.
\ea\ee
Summing (\ref{sjgj2}) and (\ref{sjgj5}) and using (\ref{gw}) and (\ref{cp39}) lead to
\be\la{sjgj7}\ba
& \frac{d}{dt} \left( A^2_1 + \frac{1}{\nu} \| P \|_{L^2}^2 \right) + \| \sqrt{\n} \dot{u}\|^2_{L^2} + \frac{2\ga -1}{4\nu^2} \| P \|^3_{L^3} \\
& \le C \| \na u\|^4_{L^2} + \frac{C}{\nu^2} \| G \|^4_{L^2} \\
& \le C \left( A^2_1 + \frac{1}{\nu} \| P \|_{L^2}^2 \right)
\left( \| \na u\|^2_{L^2} + \nu \| \div u \|^2_{L^2} + \frac{C}{\nu} \| P \|^2_{L^2} \right).
\ea\ee
Multiplying (\ref{sjgj7}) by $t$ yields
\be\la{sjgj8}\ba
& \frac{d}{dt} \left( t \left( A^2_1 + \frac{1}{\nu} \| P \|_{L^2}^2 \right) \right) + t \| \sqrt{\n} \dot{u}\|^2_{L^2} + \frac{2\ga -1}{4\nu^2} t \| P \|^3_{L^3} \\
& \le \left( A^2_1 + \frac{1}{\nu} \| P \|_{L^2}^2 \right)
+ C t \left( A^2_1 + \frac{1}{\nu} \| P \|_{L^2}^2 \right)
\left( \| \na u\|^2_{L^2} + \nu \| \div u \|^2_{L^2} + \frac{C}{\nu} \| P \|^2_{L^2} \right).
\ea\ee
Applying Gr\"onwall's inequality to (\ref{sjgj8}) and using (\ref{cp1}) and (\ref{cp032}), we obtain
\be\la{sjgj9}\ba
\sup_{\si(T) \le t\le T} t \left( A^2_1 + \frac{1}{\nu} \| P \|_{L^2}^2 \right) + \int_{\si(T)}^T t \left( \| \sqrt{\n} \dot{u}\|^2_{L^2} + \frac{1}{\nu^2} \| P \|^3_{L^3} \right) dt
\le C,
\ea\ee
which together with (\ref{cp39}) gives (\ref{sjgj01}).

Moreover, choosing $p=3$ in (\ref{sjgj4}), we have
\be\la{sjgj10}\ba
\frac{d}{dt} \| P \|^3_{L^3} + \frac{3\ga -1}{2 \nu} \| P \|^{4}_{L^{4}}
\le \frac{C}{\nu} \| G \|^{2}_{L^2} \| \sqrt{\n} \dot{u} \|^{2}_{L^2},
\ea\ee
which implies that
\be\la{sjgj11}\ba
\frac{d}{dt} \left( \frac{1}{\nu^2} t^2 \| P \|^3_{L^3} \right) + \frac{3\ga -1}{2 \nu^3} t^2 \| P \|^{4}_{L^{4}}
\le \frac{2}{\nu^2} t \| P \|^3_{L^3} + \frac{C}{\nu^3} t^2 \| G \|^{2}_{L^2} \| \sqrt{\n} \dot{u} \|^{2}_{L^2}.
\ea\ee
Integrating (\ref{sjgj11}) over $(\si(T),T)$ and using (\ref{cp031}), (\ref{cp032}), and (\ref{sjgj9}), we arrive at (\ref{sjgj02}) and complete the proof of Lemma \ref{sjgjl}.
\end{proof}

\begin{lemma}\la{g1}
Let $(\n,u)$ be a classical solution to \eqref{ns}--\eqref{i3} satisfying \eqref{cp051}.
Then there exists a positive constants $C$ depending only on
$\ga$, $\mu$, $E_0$, and $\| \n_0 \|_{L^\infty}$ such that
\be\ba\la{pd11}
\sup_{0\le t\le T}
t^2 \int\n|\dot u|^2dx
+\int_0^{T} t^2 \| \na\dot u\|^2_{L^2} dt \le C.
\ea\ee
Moreover, for any $p \in [2,\infty)$, there exists a positive constants $C$ depending only on
$p$, $\ga$, $\mu$, $E_0$, and $\| \n_0 \|_{L^\infty}$ such that
\be\la{pd011}\ba
\sup_{\si(T) \le t \le T} t^{p-1} \left( \frac{1}{\nu^{p-1}} \| P \|^{p}_{L^{p}} + \| \na u \|^p_{L^p} \right)
\le C.
\ea\ee
\end{lemma}
\begin{proof}
First, using (\ref{gw}), we rewrite $(\ref{ns})_2$ as follows:
\be\la{pd12}\ba
\n \dot{u} = \mu \Delta u+\frac{\nu-\mu}{\nu} \na G -\frac{\mu}{\nu} \na P.
\ea\ee
Adapting the method of \cite{H1,LZ2}, we apply the operator 
$ \dot u^j[\pa/\pa t+\div(u\cdot)]$
to $ (\ref{pd12})^j,$ sum over $j,$ and integrate over $\rr$, which yields
\be\la{pd13}\ba &
\left(\frac{1}{2}\int\rho|\dot{u}|^2dx \right)_t\\
& = \mu\int\dot{u}^j \left[\Delta u_t^j + \text{div}(u\Delta u^j) \right] dx 
-\frac{\mu}{\nu}\int\dot{u}^j \left[\p_jP_t+\text{div}(u \p_j (P-P(\tilde{\n})) \right]dx \\
& \quad + \frac{\nu-\mu}{\nu} \int \dot{u}^j \left[\p_t\p_j G+\text{div}(u\p_j G)\right] dx \\
& \triangleq\sum_{i=1}^{3}N_i. \ea\ee
Integrating by parts and applying Young's inequality, we derive
\be\la{pd14}\ba
N_1 & = \mu\int \dot{u}^j \left[\Delta u_t^j
+ \text{div}(u\Delta u^j) \right]dx \\
& = - \mu\int \left[|\nabla\dot{u}|^2 + \p_i\dot{u}^j\p_ku^k\p_iu^j - \p_i\dot{u}^j\p_iu^k\p_ku^j - \p_k\dot{u}^j \p_iu^j\p_iu^k \right]dx \\
&\le -\frac{ 3\mu}{4} \| \na \dot{u} \|_{L^2}^2 + C \| \na u \|_{L^4}^4.
\ea\ee
Similarly, by virtue of $(\ref{ns})_1$ and Young's inequality, it holds that
\be\la{pd15}\ba
N_2 & = - \frac{\mu}{\nu} \int \dot{u}^j \left[\p_j P_t + \text{div}( u \p_j P ) \right] dx \\
& = \frac{\mu}{\nu} \int \left[-P^{'}\rho\div \dot u\div u + P \div \dot u\div u 
- P \p_i\dot{u}^j\p_ju^i \right] dx \\
& \le \frac{\mu}{8} \| \na \dot{u} \|_{L^2}^2 + C \| \na u \|^4_{L^4} + \frac{C}{\nu^4} \| P \|^4_{L^4}.
\ea\ee
Then, integration by parts combined with (\ref{p2}), (\ref{cp37}) and Young's inequality yields
\be\la{pd16}\ba
N_3 & = \frac{\nu-\mu}{\nu} \int\dot{u}^j \left[\p_j \p_t G + \text{div}( u \p_j G ) \right] dx \\
& = -\frac{\nu-\mu}{\nu} \int \div \dot{u} \left( \dot{G}-u \cdot \na G \right) dx
+\frac{\nu-\mu}{\nu} \int \dot{u}^j \p_j G \div u + \dot{u}^j u \cdot \na \p_j G dx \\
& = -\frac{\nu-\mu}{\nu} \int \div \dot{u} \dot{G} dx
+\frac{\nu-\mu}{\nu} \int -G \div \dot{u} \div u - G \dot{u}^j \p_j \div u
- \dot{u}^j \p_j u \cdot \na G dx \\
& = -\frac{\nu-\mu}{\nu} \int \div \dot{u} \dot{G} dx
+\frac{\nu-\mu}{\nu} \int -G \div \dot{u} \div u + G \p_j u \cdot \na \dot{u}^j dx \\
& = -\frac{\nu-\mu}{\nu} \int \div \dot{u} \dot{G} dx
+\frac{\nu-\mu}{\nu} \int G \left( \na u^1 \cdot \na^\bot \dot{u}^2 - \na u^2 \cdot \na^\bot \dot{u}^1 \right) dx \\
& \le -\frac{\nu-\mu}{\nu} \int \div \dot{u} \dot{G} dx + C \| \na G \|_{L^2} \| \na u \|_{L^2} \| \na \dot{u} \|_{L^2} \\
& \le -\frac{\nu-\mu}{\nu} \int \div \dot{u} \dot{G} dx + \frac{\mu}{8} \| \na \dot{u} \|_{L^2}^{2}
+ C \| \sqrt{\n} \dot{u} \|^2_{L^2} \| \na u \|^2_{L^2},
\ea\ee
where we have used the fact that
\be\la{pd17}\ba
\sum_{j=1}^{2} \p_j u \cdot \na \dot{u}^j
= \div u \div \dot{u} + \na u^1 \cdot \na^\bot \dot{u}^2 - \na u^2 \cdot \na^\bot \dot{u}^1.
\ea\ee
For the first term in the last line of (\ref{pd16}),
by the definition of $G$ and $(\ref{ns})_1$, we have
\be\la{pd18}\ba
\div \dot{u} & = (\div u)_t + \p_i u^j \p_j u^i + u \cdot \na \div u \\
& = \frac{1}{\nu} \dot{G} + \frac{1}{\nu} (P_t+u \cdot \na P) + \p_i u^j \p_j u^i \\
& = \frac{1}{\nu} \dot{G} - \frac{1}{\nu} \ga P \div u + \p_i u^j \p_j u^i,
\ea\ee
which together with Young's inequality leads to
\be\la{pd19}\ba
-\frac{\nu-\mu}{\nu} \int \div \dot{u} \dot{G} dx
& = -\frac{\nu-\mu}{\nu^2} \| \dot{G} \|^2_{L^2} 
-\frac{\nu-\mu}{\nu} \int \dot{G} \p_i u^j \p_j u^i dx 
+ \frac{\nu-\mu}{\nu^2} \ga \int \dot{G} P \div u dx \\
& \le -\frac{\nu-\mu}{2\nu^2} \| \dot{G} \|^2_{L^2} + \frac{C}{\nu^3} \| P \|^4_{L^4} + \frac{C}{\nu^3} \| G \|^4_{L^4} + C \| \na u \|^4_{L^4}
- \frac{\nu-\mu}{\nu} \int \dot{G} \p_i u^j \p_j u^i dx.
\ea\ee
For the last term in the final line of (\ref{pd19}), integration by parts implies
\be\la{pd110}\ba
\int \dot{G} \p_i u^j \p_j u^i dx 
& = \int ( G_t + u \cdot \na G ) \p_i u \cdot \na u^i dx \\
& = \frac{d}{dt} \left( \int G \p_i u \cdot \na u^i dx \right) 
- 2 \int G \p_i u \cdot \na u^i_t dx \\
& \quad -\int G \div u \p_i u \cdot \na u^i dx 
-2 \int G u \cdot \na \p_i u \cdot \na u^i dx \\
& = \frac{d}{dt} \left( \int G \p_i u \cdot \na u^i dx \right) 
- 2 \int G \p_i u \cdot \na \dot{u}^{i} dx \\
& \quad +2 \int G \p_i u \cdot \na u \cdot \na u^i dx
-\int G \div u \p_i u \cdot \na u^i dx,
\ea\ee
which yields that
\be\la{pd111}\ba
-\frac{\nu-\mu}{\nu} \int \dot{G} \p_i u^j \p_j u^i dx
& =-\frac{\nu-\mu}{\nu} \frac{d}{dt} \left( \int G \p_i u \cdot \na u^i dx \right)
+ \frac{2(\nu-\mu)}{\nu} \int G \p_i u \cdot \na \dot{u}^{i}dx \\
& \quad - \frac{2(\nu-\mu)}{\nu} \int G \p_i u \cdot \na u \cdot \na u^i dx
+\frac{\nu-\mu}{\nu} \int G \div u \p_i u \cdot \na u^i dx.
\ea\ee
We now estimate the last three terms on the right-hand side of (\ref{pd111}) term by term.
Applying (\ref{pd17}), (\ref{gw}), (\ref{p2}) and Young's inequality, we obatin
\be\la{pd112}\ba
\frac{2(\nu-\mu)}{\nu} \int G \p_i u \cdot \na \dot{u}^{i} dx
& \le 2 \left| \int G \left( \div u \div \dot{u} + \na u^1 \cdot \na^\bot \dot{u}^2 - \na u^2 \cdot \na^\bot \dot{u}^1 \right) dx \right| \\
& \le C \| \na \dot{u} \|_{L^2} \| G \|_{L^4} \| \div u \|_{L^4}
+ C \| \na G \|_{L^2} \| \na \dot{u} \|_{L^2} \| \na u \|_{L^2} \\
& \le \frac{\mu}{8} \| \na \dot{u} \|^2_{L^2} + \frac{C}{\nu} \| G \|^4_{L^4} + \frac{C}{\nu^3} \| P \|^4_{L^4}
+ C \| \sqrt{\n} \dot{u} \|^2_{L^2} \| \na u \|^2_{L^2}.
\ea\ee
Then, we note that the following equality:
\be\la{pd114}\ba
\sum_{i=1}^{2} \p_i u \cdot \na u \cdot \na u^i
= (\div u)^3 +3\div u \na u^1 \cdot \na^\bot u^2,
\ea\ee
which together with (\ref{gw}) and Young's inequality leads to
\be\la{pd113}\ba
- \frac{2(\nu-\mu)}{\nu} \int G \p_i u \cdot \na u \cdot \na u^i dx
& \le 2 \left| \int G \left( (\div u)^3+3\div u \na u^1 \cdot \na^\bot u^2 \right) dx \right| \\
& \le C \| G \|_{L^4} \| \div u \|_{L^4} \| \na u \|^2_{L^4} \\
& \le \frac{C}{\nu} \| G \|^4_{L^4} + \frac{C}{\nu^3} \| P \|^4_{L^4} 
+ C \| \na u \|^4_{L^4}.
\ea\ee
Similarly,
\be\la{pd115}\ba
\frac{\nu-\mu}{\nu} \int G \div u \p_i u \cdot \na u^i dx 
\le \frac{C}{\nu} \| G \|^4_{L^4} + \frac{C}{\nu^3} \| P \|^4_{L^4} 
+ C \| \na u \|^4_{L^4}.
\ea\ee
Putting (\ref{pd112}), (\ref{pd113}) and (\ref{pd115}) into (\ref{pd111}), we infer that
\be\la{pd116}\ba
-\frac{\nu-\mu}{\nu} \int \dot{G} \p_i u^j \p_j u^i dx
& \le -\frac{\nu-\mu}{\nu} \frac{d}{dt} \left( \int G \p_i u \cdot \na u^i dx \right)
+ \frac{\mu}{8} \| \na \dot{u} \|^2_{L^2} + \frac{C}{\nu} \| G \|^4_{L^4} \\
& \quad + \frac{C}{\nu^3} \| P \|^4_{L^4}
+ C \| \sqrt{\n} \dot{u} \|^2_{L^2} \| \na u \|^2_{L^2} + C \| \na u \|^4_{L^4}.
\ea\ee
Therefore, it follows from (\ref{pd16}), (\ref{pd19}) and (\ref{pd116}) that
\be\la{pd117}\ba
N_3 & \le -\frac{\nu-\mu}{\nu} \frac{d}{dt} \left( \int G \p_i u \cdot \na u^i dx \right)
-\frac{\nu-\mu}{2\nu^2} \| \dot{G} \|^2_{L^2}
+ \frac{\mu}{4} \| \na \dot{u} \|_{L^2}^{2} \\
& \quad + C \| \sqrt{\n} \dot{u} \|^2_{L^2} \| \na u \|^2_{L^2} + \frac{C}{\nu} \| G \|^4_{L^4}
+ \frac{C}{\nu^3} \| P \|^4_{L^4} + C \| \na u \|^4_{L^4}.
\ea\ee
Then, substituting (\ref{pd14}), (\ref{pd15}) and (\ref{pd117}) into (\ref{pd13}) results in
\be\la{pd118}\ba
&\left(\int\rho|\dot{u}|^2dx + \frac{2(\nu-\mu)}{\nu} \int G \p_i u \cdot \na u^i dx \right)_t
+ \frac{3\mu}{4} \| \na \dot{u} \|_{L^2}^2 + \frac{\nu-\mu}{\nu^2} \| \dot{G} \|^2_{L^2} \\
& \le C \| \sqrt{\n} \dot{u} \|^2_{L^2} \| \na u \|^2_{L^2} + \frac{C}{\nu} \| G \|^4_{L^4}
+ \frac{C}{\nu^3} \| P \|^4_{L^4} + C \| \na u \|^4_{L^4} \\
& \le C \left( \| \na u \|^2_{L^2} + \frac{1}{\nu} \| G \|^2_{L^2}  \right) \| \sqrt{\n} \dot{u} \|^2_{L^2}
+ \frac{C}{\nu^3} \| P \|^4_{L^4},
\ea\ee
where in the last inequality we have used the following estimate
\be\nonumber\ba
\frac{C}{\nu} \| G \|^4_{L^4} + C \| \na u \|^4_{L^4}
& \le \frac{C}{\nu} \| G \|^4_{L^4} + C \| \div u \|^4_{L^4} + C \| \o \|^4_{L^4} \\
& \le \frac{C}{\nu} \| G \|^4_{L^4} + \frac{C}{\nu^4} \| P \|^4_{L^4}
+ C \| \o \|^2_{L^2} \| \na \o \|^2_{L^2} \\
& \le \frac{C}{\nu} \| G \|^2_{L^2} \| \sqrt{\n} \dot{u} \|^2_{L^2} + \frac{C}{\nu^4} \| P \|^4_{L^4}
+ C \| \o \|^2_{L^2} \| \sqrt{\n} \dot{u} \|^2_{L^2},
\ea\ee
due to (\ref{dc1}), (\ref{gn11}), (\ref{gw}), and (\ref{p2}).

Multiplying (\ref{pd118}) by $t^2$ and using (\ref{cp032}) and (\ref{gw}), we derive
\be\la{pd119}\ba
&\left( t^2 \int\rho|\dot{u}|^2dx + \frac{2(\nu-\mu)}{\nu} t^2 \int G \p_i u \cdot \na u^i dx \right)_t
+ \frac{3\mu}{4} t^2 \| \na \dot{u} \|_{L^2}^2 \\
& \le 2 t \left(\int\rho|\dot{u}|^2dx + \frac{2(\nu-\mu)}{\nu} \int G \p_i u \cdot \na u^i dx \right)
+ C t \| \sqrt{\n} \dot{u} \|^2_{L^2} + \frac{C}{\nu^3} t^2 \| P \|^4_{L^4}.
\ea\ee
In addition, we deduce from (\ref{gw}), (\ref{p2}) and Young's inequality that
\be\la{pd120}\ba
\frac{2(\nu-\mu)}{\nu} \int G \p_i u \cdot \na u^i dx 
& \le 2 \left| \int \left( G (\div u)^2 + 2G\na u^1 \cdot \na^\bot u^2 \right) dx \right| \\
& \le \frac{C}{\nu^2} \| G \|^3_{L^3} + \frac{C}{\nu^2} \| P \|^3_{L^3}
+ C \| \na G \|_{L^2} \| \na u \|^2_{L^2} \\
& \le \frac{C}{\nu^2} \| G \|^2_{L^2} \| \na G \|_{L^2} + \frac{C}{\nu^2} \| P \|^3_{L^3}
+C \| \sqrt{\n} \dot{u} \|_{L^2} \| \na u \|^2_{L^2} \\
& \le \frac14 \| \sqrt{\n} \dot{u} \|^2_{L^2}
+\frac{C}{\nu^4} \| G \|^4_{L^2} + \frac{C}{\nu^2} \| P \|^3_{L^3}
+ C \| \na u \|^4_{L^2}.
\ea\ee
Integrating (\ref{pd119}) over $(0,T)$ and using (\ref{cp032}), (\ref{pd120}), and H\"older's inequality, we derive (\ref{pd11}).

It remains to prove (\ref{pd011}).
From (\ref{dc1}), (\ref{gw}), and (\ref{gn11}), we deduce that for any $2 \le r<\infty$,
\be\la{pd121}\ba
\| \na u \|^r_{L^r}
& \le C \| \div u \|^r_{L^r} + C \| \o \|^r_{L^r} \\
& \le \frac{C}{\nu^r} \left( \| G \|^r_{L^r} + \| P \|^r_{L^r} \right) + C \| \o \|^2_{L^2} \| \na \o \|^{r-2}_{L^2} \\
& \le \frac{C}{\nu^r} \| G \|^2_{L^2} \| \na G \|^{r-2}_{L^2} + \frac{C}{\nu^r} \| P \|^r_{L^r} + C \| \na u \|^2_{L^2} \| \sqrt{\n} \dot{u} \|^{r-2}_{L^2} \\
& \le C \left( \frac{1}{\nu} \| G \|^2_{L^2} + \| \na u \|^2_{L^2} \right) \| \sqrt{\n} \dot{u} \|^{r-2}_{L^2} + \frac{C}{\nu^r} \| P \|^r_{L^r}.
\ea\ee
We claim that for $m \in \mathbb{N}^+$,
\be\la{pd122}\ba
\sup_{\si(T) \le t \le T} \left( \frac{1}{\nu^m} t^m \| P \|^{m+1}_{L^{m+1}} \right)
+ \int_{\si(T)}^T \frac{1}{\nu^{m+1}} t^{m} \| P \|^{m+2}_{L^{m+2}} dt \le C.
\ea\ee
Combining (\ref{pd121}), (\ref{pd122}), and H\"older's inequality gives (\ref{pd011}).

We shall prove (\ref{pd122}) by induction.
First, (\ref{sjgj01}) ensures that (\ref{pd122}) holds for $m=1$.
Assume that (\ref{pd122}) holds for $m=n$, that is,
\be\la{pd123}\ba
\sup_{\si(T) \le t \le T} \left( \frac{1}{\nu^n} t^n \| P \|^{n+1}_{L^{n+1}} \right)
+ \int_{\si(T)}^T \frac{1}{\nu^{n+1}} t^{n} \| P \|^{n+2}_{L^{n+2}} dt \le C.
\ea\ee
Choosing $p=n+2$ in (\ref{sjgj5}), multiplying the resulting inequality by $\frac{1}{\nu^n} t^{n+1}$, and using (\ref{cp032}), (\ref{sjgj01}), and (\ref{pd11}), we arrive at
\be\la{pd124}\ba
& \frac{d}{dt} \left( \frac{1}{\nu^{n+1}} t^{n+1} \| P \|^{n+2}_{L^{n+2}} \right)
+ \frac{(n+2)\ga-1}{2 \nu^{n+2}} t^{n+1} \| P \|^{n+3}_{L^{n+3}} \\
& \le \frac{n+1}{\nu^{n+1}} t^n \| P \|^{n+2}_{L^{n+2}}
+ \frac{C}{\nu^{n+2}} t^{n+1} \| G \|^{2}_{L^2} \| \sqrt{\n} \dot{u} \|^{n+1}_{L^2} \\
& \le \frac{n+1}{\nu^{n+1}} t^n \| P \|^{n+2}_{L^{n+2}} + C \left( \| \na u \|^{2}_{L^2} + \frac{1}{\nu} \| P \|^2_{L^2} \right).
\ea\ee
Integrating (\ref{pd124}) over $[\si(T),T]$ and using (\ref{pd123}), (\ref{cp1}), and (\ref{cp032}), we conclude that (\ref{pd122}) holds for $m=n+1$.
By induction, we obtain (\ref{pd011}) and complete the proof of Lemma \ref{g1}.
\end{proof}

The following lemma plays a crucial role in deriving the upper bound for the density.
\begin{lemma}\la{l4}
Let $(\n,u)$ be a classical solution of \eqref{ns} on $\rr \times (0,T]$
with $ \tilde{\n}=0 $.
Then for any $r \in [2,\infty)$, there exists a positive constant $C$ depending on
$a$, $\Vert {\bar{x}}^a \rho_0 \Vert_{L^1},\ N_0$, $E_0$, $\| \n_0 \|_{L^\infty}$, and $r$
such that for all $t \in (0,T]$,
\be\la{cp4}\ba
\left( \int_{\rr}\rho |v|^r dx\right) ^{1/r} 
\le C (1+t)^4 \left( \| \sqrt{\rho} v \| _{L^2} + \| \nabla v \|_{L^2}\right),
\ea\ee
holds for any $v\in \left. \left\{ v\in D^1 ({\rr})\right| \sqrt{\rho} v \in L^2(\rr) \right\}$.
\end{lemma}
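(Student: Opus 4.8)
The plan is to obtain \eqref{cp4} from the weighted inequality of Lemma \ref{esnr}, applied at each fixed time to $\n(\cdot,t)$ with $M_3=\rs$ (permissible by \eqref{mdsj}) and weight exponent $\beta=a$. For this I need, for every $t\in(0,T]$, a radius $N_*(t)\ge 1$ and a fixed $M_4>0$ such that $\int_{B_{N_*(t)}}\n(\cdot,t)\,dx\ge M_4$, together with a bound on $\|\bar x^a\n(\cdot,t)\|_{L^1}$. Since the right side of \eqref{esnr2} is $CN_*^3(1+\|\bar x^a\n\|_{L^1})\big(\|\sqrt\n v\|_{L^2}+\|\na v\|_{L^2}\big)$, the $t$-dependence of $N_*(t)$ and of $\|\bar x^a\n(\cdot,t)\|_{L^1}$ is exactly what produces the factor $(1+t)^4$.

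\emph{Step 1: growth of the weighted density.} Testing $(\ref{ns})_1$ against $\bar x^a$ exactly as in \eqref{cp21} gives $\frac{d}{dt}\int\n\bar x^a\,dx\le C\big(\int\n|u|^2\,dx\big)^{1/2}\big(\int\n\bar x^a\,dx\big)^{1/2}$, and by the energy bound \eqref{cp1} the first factor is $\le(2E_0)^{1/2}$. Setting $y(t):=\int\n\bar x^a\,dx$ and integrating the differential inequality $y'\le CE_0^{1/2}y^{1/2}$ yields $y(t)\le\big(\|\bar x^a\n_0\|_{L^1}^{1/2}+CE_0^{1/2}t\big)^2\le C(1+t)^2$, with $C$ depending only on $E_0$ and $\|\bar x^a\n_0\|_{L^1}$. (This is the quantitative-in-$t$ form of \eqref{cp2}.)

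\emph{Step 2: lower bound for the local mass.} Fix $\varphi_R\in C_c^\infty(B_R)$ with $\varphi_R\equiv 1$ on $B_{R/2}$ and $|\na\varphi_R|\le C/R$. Testing $(\ref{ns})_1$ against $\varphi_R$ gives $\frac{d}{dt}\int\n\varphi_R\,dx=\int\n u\cdot\na\varphi_R\,dx$, so, using the conservation of mass $\int\n\,dx=\int\n_0\,dx=1$ (see \eqref{conv}, \eqref{rho0}) and \eqref{cp1},
\[
\Big|\frac{d}{dt}\int\n\varphi_R\,dx\Big|\le\frac{C}{R}\Big(\int\n|u|^2\,dx\Big)^{1/2}\Big(\int_{B_R\setminus B_{R/2}}\n\,dx\Big)^{1/2}\le\frac{CE_0^{1/2}}{R}.
\]
Integrating in time and using \eqref{rho00} (which gives $\int\n_0\varphi_R\,dx\ge\int_{B_{N_0}}\n_0\,dx\ge\tfrac12$ whenever $R\ge 2N_0$) we get $\int_{B_R}\n(\cdot,t)\,dx\ge\tfrac12-CE_0^{1/2}t/R$. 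Choosing $R=N_*(t):=\max\{1,\,2N_0,\,8CE_0^{1/2}t\}\le C(1+t)$ makes the last term $\le\tfrac14$, whence $\int_{B_{N_*(t)}}\n(\cdot,t)\,dx\ge\tfrac14=:M_4$. (One could instead read $N_*(t)$ off Chebyshev's inequality $\int_{|x|>R}\n\le R^{-a}y(t)$ together with Step 1; either way $N_*(t)\lesssim 1+t$.)

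\emph{Step 3: conclusion.} Applying Lemma \ref{esnr} with $M_3=\rs$, $M_4=\tfrac14$, $\beta=a$, $N_*=N_*(t)$, and substituting $N_*(t)\le C(1+t)$ and $1+\|\bar x^a\n(\cdot,t)\|_{L^1}\le C(1+t)^2$ into \eqref{esnr2}, one arrives at \eqref{cp4} after collecting the powers of $(1+t)$; the resulting constant depends only on $\rs,\ r,\ N_0,\ E_0$ and $\|\bar x^a\n_0\|_{L^1}$, as required. The real difficulty — the reason the Cauchy problem is harder than the periodic or bounded-domain case — is precisely that Poincar\'e's inequality fails on $\rr$: to control an $L^r(\n\,dx)$-norm one must localize to a ball carrying a definite fraction of the mass, and since the mass drifts toward infinity at a rate bounded only by the (finite) energy, that ball must be allowed to grow in $t$, which is exactly what forces the weighted-integrability assumption $\bar x^a\n_0\in L^1$ into the hypotheses. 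Keeping the growth of both $N_*(t)$ and $\|\bar x^a\n(\cdot,t)\|_{L^1}$ polynomial — and balancing them so as to lose no more than four powers of $1+t$ — is the technical heart of the argument.
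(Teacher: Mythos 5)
Your Steps 1--2 are exactly the paper's argument (test $(\ref{ns})_1$ against a cutoff to show a ball of radius $N_*(t)\le C(1+t)$ carries mass at least $\tfrac14$, then feed this into Lemma \ref{esnr}), but Step 3 contains a genuine power-counting error that leaves the stated estimate unproved. With your choices, \eqref{esnr2} gives the factor $N_*^3\bigl(1+\Vert{\bar{x}}^a\n\Vert_{L^1}\bigr)\le C(1+t)^3\cdot(1+t)^2=C(1+t)^5$, not $C(1+t)^4$: the quadratic growth of $\Vert{\bar{x}}^a\n(\cdot,t)\Vert_{L^1}$ from the ODE $y'\le Cy^{1/2}$ is one power too expensive, so ``collecting the powers of $(1+t)$'' yields $(1+t)^5$ and the claimed balancing to four powers is not achieved. (A minor additional point: your parenthetical Chebyshev alternative for $N_*(t)$ gives $N_*(t)\lesssim(1+t)^{2/a}$, which is worse than $1+t$ when $1<a<2$, so only the cutoff argument is safe.)

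The fix is the paper's maneuver in \eqref{cp46}--\eqref{cp48}: do not take $\beta=a$ in Lemma \ref{esnr}. Instead apply \eqref{esnr2} with a small exponent $\beta$, and bound the weighted mass by testing $(\ref{ns})_1$ against the weight $(1+|x|^2)^{1/2}$, whose gradient is bounded by $1$; then
$\frac{d}{dt}\int\n(1+|x|^2)^{1/2}dx\le\int\n|u|dx\le\bigl(\int\n\,dx\bigr)^{1/2}\bigl(\int\n|u|^2dx\bigr)^{1/2}\le C$
uniformly in time by \eqref{conv}, \eqref{rho0} and \eqref{cp1}, so $\int\n(1+|x|^2)^{1/2}dx\le C(1+t)$ grows only linearly. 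Since ${\bar{x}}^\beta\le C(1+|x|^2)^{1/2}$ for $\beta$ sufficiently small, this gives $1+\Vert{\bar{x}}^\beta\n\Vert_{L^1}\le C(1+t)$, and then $(1+t)^3\cdot(1+t)=(1+t)^4$, which is \eqref{cp4}. Your Step 1, i.e.\ the quantitative version of \eqref{cp21}, is correct but is simply not the right weight to insert into Lemma \ref{esnr}; as written, your argument only proves the lemma with $(1+t)^5$ in place of $(1+t)^4$.
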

\begin{proof}
First, for any integer $N>1$, let $\varphi_N$ be a smooth function  satisfies:
\be\la{cp41}\ba
0 \le \varphi_N \le 1, \quad \varphi_N=
\begin{cases}
1,\quad &\mathrm{ if \  } |x| \le N,\\
0,\quad &\mathrm{ if \  } |x| > 2N,
\end{cases}
\quad |\na \varphi_N | \le 2 N^{-1}.
\ea\ee
Multiplying $(\ref{ns})_1$ by $\varphi_N$ and integrating over $\rr$,
we deduce from (\ref{cp1}) and (\ref{cp41}) that
\be\la{cp42}\ba
\frac{d}{dt} \int \n \varphi_N dx =\int \n u \cdot \na \varphi_N dx 
\ge -2 N^{-1} \left( \int \n dx \right)^{\frac{1}{2}} 
\left( \int \n |u|^2 dx \right)^{\frac{1}{2}} \ge -2 \hat{C} N^{-1},
\ea\ee
which implies that for any $0 \le t \le T$,
\be\la{cp43}\ba
\int \n \varphi_N dx \ge \int \n_0 \varphi_N dx -2 \hat{C} N^{-1}t,
\ea\ee
where the positive constant $\hat{C}$ depends only on $\| {\bar{x}}^a \rho_0 \|_{L^1}$ and $E_0$.

Then, for $\tilde{N} \triangleq 4(1+N_0+4\hat{C}t)$, where $N_0$ is defined in \eqref{rho00},
we use (\ref{rho00}) to derive
\be\la{cp44}\ba
\int_{B_{N_1}} \n dx \ge \int \n \varphi_{\tilde{N}/2} dx
& \ge \int \n_0 \varphi_{\tilde{N}/2} dx - 4 \hat{C} \tilde{N}^{-1}t \\
& \ge \int \n_0 \varphi_{N_0} dx - 4 \hat{C} \tilde{N}^{-1}t \\
& \ge \int_{B_{N_0}} \n_0 dx - 4 \hat{C} \tilde{N}^{-1}t \ge \frac{1}{4}.
\ea\ee
Consequently, there exists a positive constant $N_1$ depending on
$N_0$, $\| {\bar{x}}^a \rho_0 \|_{L^1}$, and $E_0$,
such that for all $t \in (0,T]$,
\be\la{cp45}\ba
\int _{B_{N_1(1+t)}}\rho (x,t) dx \ge \frac{1}{4}.
\ea\ee
From (\ref{cp45}), (\ref{cp1}), and Lemma \ref{esnr}, we conclude that for any $r \in [2,\infty)$ and $\beta>0$,
\be\la{cp46}\ba
\left( \int _{\rr}\rho |v|^r dx\right) ^{1/r} 
& \le C (1+t)^3 (1+\Vert {\bar{x}}^\beta \rho \Vert_{L^1}) 
\left( \Vert \sqrt{\rho} v \Vert _{L^2} 
+ \Vert \nabla v \Vert _{L^2}\right).
\ea\ee
Next, multiplying $(\ref{ns})_1$ by $\left(1+|x|^2 \right)^{\frac{1}{2}}$
and integrating the resulting equation over $\rr$, we arrive at
\be\la{cp47}\ba
\frac{d}{dt}\int \rho (1+|x|^2)^{\frac{1}{2}} dx
&\le \int |x| (1+|x|^2)^{-\frac{1}{2}} \rho |u| dx \\
&\le \left( \int \rho dx\right) ^{1/2}\left( \int \rho |u|^2 dx\right) ^{1/2} \\
&=\left( \int \rho_0 dx\right) ^{1/2}\left( \int \rho |u|^2 dx\right) ^{1/2} \\
&\le \left( \int {\bar{x}}^a \rho_0 dx\right) ^{1/2}\left( \int \rho |u|^2 dx\right) ^{1/2} \\
&\le C,
\ea\ee
where in the last inequality we have used (\ref{cp1}).

Integrating (\ref{cp47}) over $(0,t)$ shows
\be\la{cp48}\ba
\int \rho (1+|x|^2)^{\frac{1}{2}} dx 
& \le \int \rho_0 (1+|x|^2)^{\frac{1}{2}} dx + C t \\
& \le \int {\bar{x}}^a \rho_0 dx + C t \\
& \le C(1+t).
\ea\ee
Combining this with (\ref{cp46}) and choosing $\beta$ sufficiently small, we obtain (\ref{cp4}) and finish the proof of Lemma \ref{l4}.
\end{proof}

With Lemmas \ref{l1}--\ref{l4} at hand, we are in a position to prove Proposition \ref{l5}.

\begin{proof}[Proof of Proposition \ref{l5}]
First, we use (\ref{gw}) to rewrite $(\ref{ns})_1$ as
\be\ba\la{2cp61}
\pa_t \log\n + u\cdot\na \log\n + \frac{1}{\nu}(\n^\ga + G) = 0.
\ea\ee
By making use of the fact that $\n^\ga \geq \ga\log\n + 1$, we have
\be\ba\la{2cp62}
\frac{d}{ds} \log \n(s) + \frac{\ga}{\nu}\log \n
\le \frac{1}{\nu} \| G \|_{L^\infty}.
\ea\ee
Applying the maximum principle to (\ref{2cp62}) leads to
\be\ba\la{2cp63}
\log \n(t)
& \le e^{-\frac{\ga}{\nu}t} \log \n(0)
+ \frac{1}{\nu} \int_0^t e^{-\frac{\ga}{\nu}(t-s)} \| G \|_{L^\infty} ds.
\ea\ee
Moreover, from (\ref{cp051}), (\ref{cp4}), and (\ref{p2}), we conclude that for any $2 \le p < \infty$,
\be\la{2cp64}\ba
\| \na G \|_{L^p} \le C \| \n \dot{u} \|_{L^p}
\le C(1+t)^4 \left( \| \sqrt{\n} \dot{u} \|_{L^2} + \| \na \dot{u} \|_{L^2} \right).
\ea\ee
Combining (\ref{gw}), (\ref{gn11}), (\ref{cp031}), (\ref{pd11}), (\ref{2cp64}), and the Gagliardo-Nirenberg inequality gives
\be\ba\la{2cp65}
\int_0^{\si(t)} \| G \|_{L^\infty} ds
& \le C \int_0^{\si(t)} \| G \|_{L^2}^{3/8} \| \na G \|_{L^5}^{5/8} ds \\
& = C \int_0^{\si(t)} \| G \|_{L^2} ^{3/8} \left(\si^2\| \na G \|^{2}_{L^5} \right)^{5/16} \si^{ -5/8 } ds \\
& \le C \nu ^{3/8} \int_0^{\si(t)} \left( \si^2 \| \sqrt{\n} \dot u \|^{2}_{L^2} + \si^2 \| \na \dot u \|^{2}_{L^2} \right)^{5/16}
\si^{-5/8} ds \\
& \le C \nu^{3/8} \left( \int_0^1 \si^{-10/11} ds \right)^{11/16} \\
& \le C \nu^{3/8},
\ea\ee
and
\be\la{2cp66}\ba
\int_{\si(T)}^T \| G \|^4_{L^\infty} dt
& \le C \int_{\si(T)}^T \| G \|^{35/9}_{L^{72}} \| \na G \|^{1/9}_{L^{72}} dt \\
& \le C \int_{\si(T)}^T \| G \|^{35/9}_{L^{72}}
t^{4/9} \left( \| \sqrt{\n} \dot{u} \|_{L^2} + \| \na \dot{u} \|_{L^2} \right)^{1/9} dt \\
& \le C \nu^{1/18} \int_{\si(T)}^T t^{-27/18}
\left( \| \sqrt{\n} \dot{u} \|_{L^2} + \| \na \dot{u} \|_{L^2} \right)^{1/9} dt \\
& \le C \nu^{1/18},
\ea\ee
where in the third inequality we have used the following estimate:
\be\nonumber\ba
\| G \|_{L^{72}} \le C \| G \|^{1/36}_{L^2} \| \na G \|^{1-1/36}_{L^2}
\le C \| G \|^{1/36}_{L^2} \| \sqrt{\n} \dot{u} \|^{1-1/36}_{L^2}
\le C \nu^{1/72} t^{-1+1/72},
\ea\ee
due to (\ref{gn11}), (\ref{p2}), (\ref{sjgj01}), and (\ref{pd11}).

By virtue of (\ref{2cp65}), (\ref{2cp66}), and H\"older's inequality, it holds that
\be\la{2cp67}\ba
\int_0^t e^{-\frac{\ga}{\nu}(t-s)} \| G \|_{L^\infty} ds
& = \int_0^{\si(t)} e^{-\frac{\ga}{\nu}(t-s)} \| G \|_{L^\infty} ds
+ \int_{\si(t)}^t e^{-\frac{\ga}{\nu}(t-s)} \| G \|_{L^\infty} ds \\
& \le C \int_0^{\si(t)} \| G \|_{L^\infty} ds
+ \left( \int_{\si(t)}^t e^{-\frac{4\ga}{3\nu}(t-s)} ds \right)^{\frac{3}{4}}
\left( \int_{\si(t)}^t \| G \|^4_{L^\infty} ds \right)^{\frac{1}{4}} \\
& \le C \nu^{\frac{5}{6}}.
\ea\ee
Substituting (\ref{2cp67}) into (\ref{2cp63}) yields
\be\ba\la{2cp68}
\log \n(t)
& \le e^{-\frac{\ga}{\nu}t} \log \left( 1 + \| \n_0 \|_{L^\infty} \right)
+ C \nu^{-\frac{1}{6}} \\
& \le \log \left( 1 + \| \n_0 \|_{L^\infty} \right)
+ M_1 \nu^{-\frac{1}{6}},
\ea\ee
where $M_1$ is a positive constant depending only on
$\mu$, $\ga$, $a$, $\Vert {\bar{x}}^a \rho_0 \Vert_{L^1}$, $N_0$, $E_0$, $\| \n_0 \|_{L^\infty}$, and $\|\na u_0\|_{L^2}$,
but is independent of $T$ and $\nu$.

Finally, we define
\be\ba\la{2cp69}
\nu_1 \triangleq \left( \frac{M_1}{\log \frac{3}{2}} \right)^6,
\ea\ee
which ensures (\ref{cp052}) when $\nu \geq \nu_1$.
This completes the proof of Proposition \ref{l5}.
\end{proof}

\subsection{Far-field density is away from vacuum}

In this subsection, we assume that $\tilde{\n}>0$, and that the initial data $(\n_0,u_0)$ satisfy (\ref{wsol1}) and $(\ref{wsol01})_2$ with $\n_0>0$.

We define
\be\ba\nonumber
E(T) \triangleq \sup_{0 \le t \le T} \si \left( \| \na u \|^2_{L^2} + \nu \| \div u \|^2_{L^2} \right) + \int_{0}^T \si \int \n |\dot{u}|^2 dx dt.
\ea\ee

We derive the following key a priori estimates on $(\n,u)$, which guarantees the local solution $(\n,u)$ to the global one.
\begin{proposition}\label{1cpp1}
There are two generic positive constants $\nu_2$ and $\mathbf{C_1}$ depending only on $\ga$, $\mu$, $E_0$, $\| \n_0 \|_{L^\infty}$, $\| \na u_0 \|_{L^2}$, and $\tilde{\n}$ such that if $(\rho,u)$ is a classical solution of \eqref{ns}--\eqref{i3} on $\rr \times(0,T]$ satisfying
\be\la{1cpp2}\ba
\sup_{\rr \times [0,T]} \n \leq 2 \left( 1 + \| \n_0 \|_{L^\infty} \right) e^{\frac{1}{\ga} \tilde{\n}^\ga}, \quad
E(T) \le 2 \mathbf{C_1},
\ea\ee
the following estimate holds:
\be\la{1cpp3}\ba
\sup_{\rr \times [0,T]} \n \leq \frac{3}{2} \left( 1 + \| \n_0 \|_{L^\infty} \right) e^{\frac{1}{\ga} \tilde{\n}^\ga}, \quad
E(T) \le \mathbf{C_1},
\ea\ee
provided that $\nu \ge \nu_2$.
\end{proposition}
\begin{proof}
Proposition \ref{1cpp1} is an easy consequence of the following Lemmas \ref{cpl3} and \ref{cpl5}, with $\nu_2$ as in (\ref{1cp69}) and $\mathbf{C_1}$ as in (\ref{1cp324}).
\end{proof}

We begin with the following standard energy estimate.
\begin{lemma}\la{cpl1}
Suppose that $(\n,u)$ is a classical solution to \eqref{ns} 
on  $\rr \times (0,T]$, then the following holds:
\be\ba\la{1cp1}
\sup_{0\leq t\leq T} \int \left( \frac{1}{2}\rho |u|^2 + H(\n) \right) dx
+ \int_0^T  \left( \mu \|\na u\|^2_{L^2} + (\mu+\lam) \|\div u\|^2_{L^2} \right) dt \leq E_0,
\ea\ee
where $E_0$ is defined by \eqref{e0}.
\end{lemma}
\begin{proof}
Multiplying $(\ref{ns})_2$ by $u$ and integrating by parts over $\rr$, we obtain (\ref{1cp1}) after using $(\ref{ns})_1$.
\end{proof}

\begin{lemma}\la{cpl2}
Let $(\n,u)$ be a classical solution to \eqref{ns}--\eqref{i3} satisfying \eqref{1cpp2}.
Then there exists a positive constant $\tilde{C}$ depending only on
$\ga$, $\mu$, $E_0$, $\| \n_0 \|_{L^\infty}$, and $\tilde{\n}$ such that
\be\la{1cp02}\ba
\sup_{0 \le t \le \si(T)} \left( \| \na u \|^2_{L^2} + \nu \| \div u \|^2_{L^2} \right)
+ \int_0^{\si(T)} \| \sqrt{\n} \dot{u}\|^2_{L^2} dt \le \tilde{C} \left( 1 + \nu \| \div u_0 \|^2_{L^2} + \| \na^\bot \cdot u_0 \|^2_{L^2} \right),
\ea\ee
and
\be\la{1cp002}\ba
\sup_{0 \le t \le \si(T)} t \left( \| \na u \|^2_{L^2} + \nu \| \div u \|^2_{L^2} \right)
+ \int_0^{\si(T)} t \| \sqrt{\n} \dot{u}\|^2_{L^2} dt \le \tilde{C}.
\ea\ee
\end{lemma}
\begin{proof}
Following the proof of Lemma \ref{l3}, we can obtain
\be\la{1cp21}\ba
\frac{d}{dt} A^2_1 + \| \sqrt{\n} \dot{u}\|^2_{L^2}
\le C A^2_1 \| \na u\|^2_{L^2}+C\| \na u\|^2_{L^2}+C\| P-P(\tilde{\n}) \|^2_{L^2},
\ea\ee
where $A_1$ is given by (\ref{defa1}).

By virtue of (\ref{1cpp2}), (\ref{1cp1}), (\ref{qkjsn}), and (\ref{gw}), it holds that
\be\la{1cp27}\ba
\| P-P( \tilde{\n}) \|^2_{L^2} \le C \| \n - \tilde{\n} \|^2_{L^2} \le C,
\ea\ee
and
\be\ba\la{1cp29}
\| \na u\|^2_{L^2} & \le C \left( \| \div u\|^2_{L^2}+\| \o \|^2_{L^2} \right) \\
& \le \frac{C}{\nu^2} \left( \| G \|^2_{L^2}+\| P-P( \tilde{\n}) \|^2_{L^2} \right) + C \| \o \|^2_{L^2} \\
& \le C A^2_1 + C.
\ea\ee
Combining (\ref{1cp21}) with (\ref{1cp27}) gives
\be\la{1cp210}\ba
& \frac{d}{dt} A^2_1 + \| \sqrt{\n} \dot{u}\|^2_{L^2}
\le C A^2_1 \| \na u \|^2_{L^2} + C \| \na u \|^2_{L^2} + C.
\ea\ee
Applying Gr\"onwall's inequality to (\ref{1cp210}) over $(0,\si(T))$ together with (\ref{1cp1}) and (\ref{1cp29}) gives (\ref{1cp02}).

On the other hand, multiplying (\ref{1cp210}) by $t$ shows
\be\la{1cp211}\ba
\frac{d}{dt} \left( t A^2_1 \right) + t \| \sqrt{\n} \dot{u}\|^2_{L^2}
\le C t A^2_1 \| \na u \|^2_{L^2} + C t \left( \| \na u \|^2_{L^2} + 1 \right) + \nu \| \div u \|^2_{L^2},
\ea\ee
Applying Gr\"onwall's inequality to (\ref{1cp211}) over $(0,\si(T))$ and using (\ref{1cp1}) and (\ref{1cp29}), we obtain (\ref{1cp002}).
This completes the proof of Lemma \ref{cpl2}.
\end{proof}

\begin{lemma}\la{cpl3}
There exist two positive constants $\hat{\nu}_2$ and $\mathbf{C_1}$ depending only on
$\ga$, $\mu$, $E_0$, $\| \n_0 \|_{L^\infty}$, and $\tilde{\n}$ such that,
if $(\n,u)$ is a classical solution to \eqref{ns}--\eqref{i3} satisfying \eqref{1cpp2}, then
\be\la{1cp03}\ba
E(T) \le \mathbf{C_1},
\ea\ee
provided $\nu \ge \hat{\nu}_2$.
\end{lemma}
\begin{proof}
First, if $T \le 1$, Lemma \ref{cpl2} directly gives (\ref{1cp03}) with $\mathbf{C_1} = \tilde{C}$.

Next, assume that $T>1$.
From (\ref{1cp002}), we obtain
\be\la{1cp0310}\ba
\sup_{0 \le t \le 1} t \left( \| \na u \|^2_{L^2} + \nu \| \div u \|^2_{L^2} \right)
+ \int_0^1 t \| \sqrt{\n} \dot{u}\|^2_{L^2} dt \le C.
\ea\ee
Multiplying $(\ref{ns})_2$ by $\dot{u}$ yields
\be\la{1cp31}\ba
\int \n |\dot{u}|^2 dx = - \int \dot{u} \cdot \na (P-P(\tilde{\n})) dx + \mu \int \Delta u \cdot \dot{u} dx + (\mu+\lam) \int \na \div u \cdot \dot{u} dx.
\ea\ee
Note that $P-P(\tilde{\n})$ satisfies
\be\la{1cp32}\ba
(P-P(\tilde{\n}))_t + u \cdot \na (P-P(\tilde{\n})) + \ga (P-P(\tilde{\n})) \div u + \ga P(\tilde{\n}) \div u = 0.
\ea\ee
Integrating by parts and using (\ref{1cp32}) lead to
\be\la{1cp33}\ba
- \int \dot{u} \cdot \na (P-P(\tilde{\n})) dx
& = \int \left( (\div u)_t (P-P(\tilde{\n})) - (u \cdot \na u) \cdot \na (P-P(\tilde{\n})) \right) dx \\
& = \left( \int \div u (P-P(\tilde{\n})) dx \right)_t
+ \int u \cdot \na (P-P(\tilde{\n})) \div u dx \\
& \quad + \int \ga P (\div u)^2 dx
- \int (u \cdot \na u) \cdot \na (P-P(\tilde{\n})) dx \\
& = \left( \int \div u (P-P(\tilde{\n})) dx \right)_t
- \int \left( (P-P(\tilde{\n})) - \ga P \right) (\div u)^2 dx \\
& \quad + \int \p_i u^j \p_j u^i (P-P(\tilde{\n})) dx \\
& \le \left( \int \div u (P-P(\tilde{\n})) dx \right)_t + C \| \na u \|^2_{L^2}.
\ea\ee
Integration by parts also implies
\be\la{1cp34}\ba
\mu \int \Delta u \cdot \dot{u} dx
& = -\frac{\mu}{2} \left( \| \na u \|^2_{L^2} \right)_t
- \mu \int \p_i u^j \p_i(u^k \p_k u^j) dx \\
& = -\frac{\mu}{2} \left( \| \na u \|^2_{L^2} \right)_t
- \mu \int \left( \p_i u^j \p_i u^k \p_k u^j + \p_i u^j u^k \p_k \p_i u^j \right) dx \\
& = -\frac{\mu}{2} \left( \| \na u \|^2_{L^2} \right)_t
- \mu \int \p_i u^j \p_i u^k \p_k u^j dx
+ \frac{\mu}{2} \int \div u |\na u|^2 dx \\
& = -\frac{\mu}{2} \left( \| \na u \|^2_{L^2} \right)_t
- \frac{\mu}{2} \int \div u |\na u|^2 dx
- \mu \int \div u \na u^1 \cdot \na^\bot u^2 dx,
\ea\ee
where in the last equality we have used the following fact:
\be\la{1cp35}\ba
\p_i u^j \p_i u^k \p_k u^j = \div u |\na u|^2 + \div u \na u^1 \cdot \na^\bot u^2.
\ea\ee
Moreover, we also have
\be\la{1cp36}\ba
& (\mu+\lam) \int \na \div u \cdot \dot{u} dx \\
& = - \frac{(\mu+\lam)}{2} \left( \| \div u \|^2_{L^2} \right)_t
- (\mu+\lam) \int \div u \div (u \cdot \na u) dx \\
& = -\frac{(\mu+\lam)}{2} \left( \| \div u \|^2_{L^2} \right)_t
- (\mu+\lam) \int \left( \div u \p_i u^j \p_j u^i + \div u u \cdot \na \div u \right) dx \\
& = -\frac{(\mu+\lam)}{2} \left( \| \div u \|^2_{L^2} \right)_t
- (\mu+\lam) \int \div u \p_i u^j \p_j u^i dx
+ \frac{(\mu+\lam)}{2} \int (\div u)^3 dx \\
& = -\frac{(\mu+\lam)}{2} \left( \| \div u \|^2_{L^2} \right)_t
- \frac{(\mu+\lam)}{2} \int (\div u)^3 dx
- 2 (\mu+\lam) \int \div u \na u^1 \cdot \na^\bot u^2 dx,
\ea\ee
where in the last equality we have used the identity
\be\la{1cp37}\ba
\p_i u^j \p_j u^i = (\div u)^2 + 2 \na u^1 \cdot \na^\bot u^2.
\ea\ee
Substituting (\ref{1cp33}), (\ref{1cp34}), and (\ref{1cp36}) into (\ref{1cp31}) yields
\be\la{1cp38}\ba
& \frac{d}{dt} \left( \frac{\mu}{2} \| \na u \|^2_{L^2} + \frac{(\mu+\lam)}{2} \| \div u \|^2_{L^2} - \int \div u (P-P(\tilde{\n})) dx \right)
+ \int \n |\dot{u}|^2 dx \\
& \le C \| \na u \|^2_{L^2} - \frac{\mu}{2} \int \div u |\na u|^2 dx
- \frac{(\mu+\lam)}{2} \int (\div u)^3 dx
- (3\mu+2\lam) \int \div u \na u^1 \cdot \na^\bot u^2 dx.
\ea\ee
Since $(\ref{gw})$ implies
\be\la{1cp380}\ba
\div u = \frac{1}{\nu} (G + (P-P(\tilde{\n}))).
\ea\ee
By virtue of (\ref{gw}), (\ref{1cp380}), and H\"older's inequality, it holds that
\be\la{1cp39}\ba
- \frac{\mu}{2} \int \div u |\na u|^2 dx
& = - \frac{\mu}{2 \nu} \int (G + (P-P(\tilde{\n}))) |\na u|^2 dx \\
& \le C \| \na u \|^2_{L^2} + \frac{C}{\nu} \| G \|_{L^4} \| \na u \|_{L^2} \| \na u \|_{L^4} \\
& \le C \| \na u \|^2_{L^2} + \frac{C}{\nu^3} \| G \|^4_{L^4} + \frac{C}{\nu} \| \na u \|^4_{L^4} \\
& \le C \left( 1 + \| \na u \|^2_{L^2} \right) \| \sqrt{\n} \dot{u} \|^2_{L^2} + \frac{C}{\nu^2} \| P-P(\tilde{\n}) \|^4_{L^4} + C \| \na u \|^2_{L^2},
\ea\ee
where in the third inequality we have used the following estimates:
\be\la{1cp310}\ba
\| G \|^4_{L^4} \le C \| G \|^2_{L^2}\| \na G \|^2_{L^2}
& \le C \nu^2 \left( \| \div u \|^2_{L^2} + \| P-P(\tilde{\n}) \|^2_{L^2} \right) \| \sqrt{\n} \dot{u} \|^2_{L^2} \\
& \le C \nu^2 \left( 1 + \| \div u \|^2_{L^2} \right) \| \sqrt{\n} \dot{u} \|^2_{L^2},
\ea\ee
and
\be\la{1cp311}\ba
\|\na u\|^4_{L^4} & \le C \left( \|\div u\|^4_{L^4} + \|\o \|^4_{L^4} \right) \\
& \le \frac{C}{\nu^4} \left( \| G \|^4_{L^4} + \| P-P(\tilde{\n}) \|^4_{L^4} \right)
+ C \|\o \|^2_{L^2} \| \na \o \|^2_{L^2} \\
& \le C \left( 1 + \| \div u \|^2_{L^2} + \| \o \|^2_{L^2} \right) \| \sqrt{\n} \dot{u} \|^2_{L^2} + \frac{C}{\nu^4} \| P-P(\tilde{\n}) \|^4_{L^4},
\ea\ee
due to (\ref{gn11}), (\ref{p2}), and (\ref{1cp27}).

It follows from (\ref{1cp380}), (\ref{1cp310}), and Cauchy's inequality that
\be\la{1cp312}\ba
- \frac{(\mu+\lam)}{2} \int (\div u)^3 dx
& \le \frac{C}{\nu} \int \left( |G|^2 + | P-P(\tilde{\n}) |^2 \right) |\div u| dx \\
& \le \frac{C}{\nu^3} \left( \| G \|^4_{L^4} + \| P-P(\tilde{\n}) \|^4_{L^4} \right) + C \nu \| \div u \|^2_{L^2} \\
& \le \frac{C}{\nu} \left( 1 + \| \div u \|^2_{L^2} \right) \| \sqrt{\n} \dot{u} \|^2_{L^2} + \frac{C}{\nu^3} \| P-P(\tilde{\n}) \|^4_{L^4} + C \nu \| \div u \|^2_{L^2}.
\ea\ee
Furthermore, (\ref{1cp380}) and Young's inequality give
\be\la{1cp313}\ba
- (3\mu+2\lam) \int \div u \na u^1 \cdot \na^\bot u^2 dx
& = - \frac{3\mu+2\lam}{\nu} \int (G + (P-P(\tilde{\n}))) \na u^1 \cdot \na^\bot u^2 dx \\
& \le C \| \na u \|^2_{L^2} + C \| G \|_{\mathcal{BMO}} \|\na u^1\cdot \na^{\bot}u^2\|_{\mathcal{H}^1} \\
& \le C \| \na u \|^2_{L^2} + C \| \sqrt{\n} \dot{u} \|_{L^2} \| \na u \|^2_{L^2} \\
& \le \frac{1}{2} \| \sqrt{\n} \dot{u} \|^2_{L^2} + C \| \na u \|^4_{L^2} + C \| \na u \|^2_{L^2}.
\ea\ee
Putting (\ref{1cp39}), (\ref{1cp312}), and (\ref{1cp313}) into (\ref{1cp38}) shows
\be\la{1cp314}\ba
& \frac{d}{dt} \left( \frac{\mu}{2} \| \na u \|^2_{L^2} + \frac{(\mu+\lam)}{2} \| \div u \|^2_{L^2} - \int \div u (P-P(\tilde{\n})) dx \right)
+ \frac{1}{2} \int \n |\dot{u}|^2 dx \\
& \le \frac{C}{\nu} \left( 1 + \| \na u \|^2_{L^2} \right) \| \sqrt{\n} \dot{u} \|^2_{L^2} + \frac{C}{\nu^3} \| P-P(\tilde{\n}) \|^4_{L^4} + C \| \na u \|^4_{L^2} \\
& \quad + C \left( \| \na u \|^2_{L^2} + \nu \| \div u \|^2_{L^2} \right).
\ea\ee
On the other hand, multiplying (\ref{1cp32}) by $3 (P-P(\tilde{\n}))^2$ and using (\ref{1cp380}), we derive
\be\la{1cp315}\ba
\frac{3\ga-1}{\nu} \| P-P(\tilde{\n}) \|^4_{L^4}
& = -\left( \int (P-P(\tilde{\n}))^3 dx \right)_t
- \frac{3\ga-1}{\nu} \int (P-P(\tilde{\n}))^3 G dx \\
& \quad - 3\ga P(\tilde{\n}) \int (P-P(\tilde{\n}))^2 \div u dx \\
& \le -\left( \int (P-P(\tilde{\n}))^3 dx \right)_t
+ \frac{\ga}{\nu} \| P-P(\tilde{\n}) \|^4_{L^4} + \frac{C}{\nu} \| G \|^4_{L^4} + C \nu \| \div u \|^2_{L^2},
\ea\ee
which yields
\be\la{1cp316}\ba
\frac{1}{\nu} \| P-P(\tilde{\n}) \|^4_{L^4}
& \le -\left( \int (P-P(\tilde{\n}))^3 dx \right)_t
+ \frac{C}{\nu} \| G \|^4_{L^4} + C \nu \| \div u \|^2_{L^2}.
\ea\ee
Combining (\ref{1cp314}) and (\ref{1cp316}) leads to
\be\la{1cp317}\ba
\frac{d}{dt} B(t) + \frac{1}{2} \int \n |\dot{u}|^2 dx
& \le \frac{C}{\nu} \left( 1 + \| \na u \|^2_{L^2} \right) \| \sqrt{\n} \dot{u} \|^2_{L^2} + C \| \na u \|^4_{L^2} + C \left( \| \na u \|^2_{L^2} + \nu \| \div u \|^2_{L^2} \right),
\ea\ee
with
\be\la{1cp318}\ba
B(t) \triangleq \frac{\mu}{2} \| \na u \|^2_{L^2} + \frac{(\mu+\lam)}{2} \| \div u \|^2_{L^2} - \int \div u (P-P(\tilde{\n})) dx
+ \frac{C}{\nu^2} \int (P-P(\tilde{\n}))^3 dx.
\ea\ee
Then, for any $t \in (1,T)$, integrating (\ref{1cp317}) over $(1,t)$ and using (\ref{1cp1}), we arrive at
\be\la{1cp319}\ba
& \frac{\mu}{2} \| \na u(t) \|^2_{L^2} + \frac{(\mu+\lam)}{2} \| \div u(t) \|^2_{L^2} - \int (\div u (P-P(\tilde{\n}))) (t) dx
+ \frac{1}{2} \int_{1}^t \| \sqrt{\n} \dot{u} \|^2_{L^2} ds \\
& \le B(1) + \frac{C}{\nu} \left( 1 + E_1(T) \right) E_1(T)
+ C \int_{1}^t \| \na u(\cdot,s) \|^4_{L^2} ds + C.
\ea\ee
Moreover, from (\ref{1cp27}), (\ref{1cp0310}), and Holder's inequality, we have
\be\la{1cp320}\ba
B(1) & = \frac{\mu}{2} \| \na u(\cdot,1) \|^2_{L^2} + \frac{(\mu+\lam)}{2} \| \div u(\cdot,1) \|^2_{L^2} \\
& \quad - \int (\div u (P-P(\tilde{\n})))(x,1) dx
+ \frac{C}{\nu^2} \int (P-P(\tilde{\n}))^3 (x,1) dx \\
& \le C \left( \| \na u(\cdot,1) \|^2_{L^2} + \nu \| \div u(\cdot,1) \|^2_{L^2} \right) + C \| (P-P(\tilde{\n}))(\cdot,1) \|^2_{L^2}
\le C,
\ea\ee
and
\be\la{1cp321}\ba
\int (\div u (P-P(\tilde{\n}))) (t) dx
\le \| \div u(\cdot,t) \|^2_{L^2} + C \| (P-P(\tilde{\n}))(\cdot,t) \|^2_{L^2}
\le \frac{1}{\nu} E_1(T) + C.
\ea\ee
Combining (\ref{1cp318}), (\ref{1cp319}), and (\ref{1cp320}) yields
\be\la{1cp322}\ba
& \| \na u(t) \|^2_{L^2} + \nu \| \div u(t) \|^2_{L^2} + \int_1^t \| \sqrt{\n} \dot{u} \|^2_{L^2} ds \\
& \le C_1 + \frac{C_2}{\nu} \left( 1 + E_1(T) \right) E_1(T)
+ C_3 \int_1^t \| \na u(\cdot,s) \|^4_{L^2} ds.
\ea\ee
Applying Gr\"onwall's inequality to (\ref{1cp322}), we obtain for any $t \in (1,T)$,
\be\la{1cp323}\ba
& \| \na u(t) \|^2_{L^2} + \nu \| \div u(t) \|^2_{L^2} + \int_1^t \| \sqrt{\n} \dot{u} \|^2_{L^2} ds \\
& \le \left( C_1 + \frac{C_2}{\nu} \left( 1 + E_1(T) \right) E_1(T) \right)
\exp{ \left( C_3 \int_1^t \| \na u(\cdot,s) \|^2_{L^2} ds \right) } \\
& \le \left( C_1 + \frac{C_2}{\nu} \left( 1 + E_1(T) \right) E_1(T) \right)
\exp{ \left( C_3 E_0 \right) }.
\ea\ee
We set
\be\la{1cp324}\ba
\mathbf{C_1} \triangleq C_1 \exp{ \left( C_3 E_0 \right) } + \tilde{C},
\ea\ee
and
\be\la{1cp325}\ba
\hat{\nu}_2 \triangleq 4 C_2 \exp{ \left( C_3 E_0 \right) } \left( 1 + 2 C_1 \exp{ \left( C_3 E_0 \right) } + 2 \tilde{C} \right).
\ea\ee
where $\tilde{C}$ is given in Lemma \ref{cpl2}.

Thus, it follows from (\ref{1cp002}), (\ref{1cp323}), (\ref{1cp324}), and (\ref{1cp325}) that (\ref{1cp03}) holds provided $\nu \ge \hat{\nu}_2$.
This completes the proof of Lemma \ref{cpl3}.
\end{proof}

\begin{lemma}\la{cpl4}
Let $(\n,u)$ be a classical solution to \eqref{ns}--\eqref{i3} satisfying \eqref{1cpp2}.
Then there exists a positive constant $C$ depending only on
$\ga$, $\mu$, $E_0$, $\| \n_0 \|_{L^\infty}$, and $\tilde{\n}$ such that
\be\ba\la{1cp04}
\sup_{0\le t\le T}
\si^2 \int\n|\dot u|^2dx
+\int_0^{T} \si^2 \| \na\dot u\|^2_{L^2} dt \le C.
\ea\ee
\end{lemma}
\begin{proof}
First, using (\ref{gw}), we rewrite $(\ref{ns})_2$ as
\be\la{1cp41}\ba
\n \dot{u} = \mu \Delta u+\frac{\nu-\mu}{\nu} \na G -\frac{\mu}{\nu} \na(P-P(\tilde{\n})).
\ea\ee

Following the proof for the periodic case, we apply the operator 
$ \dot u^j[\pa/\pa t+\div(u\cdot)]$
to $ (\ref{1cp41})^j,$ sum over $j$, and integrate by parts over $\rr$ to obtain
\be\la{1cp42}\ba
& \left(\frac{1}{2}\int\rho|\dot{u}|^2dx \right)_t\\
& = \mu\int\dot{u}^j \left[\Delta u_t^j + \text{div}(u\Delta u^j) \right] dx 
-\frac{\mu}{\nu}\int\dot{u}^j \left[\p_jP_t+\text{div}(u \p_j (P-P(\tilde{\n})) \right]dx \\
& \quad + \frac{\nu-\mu}{\nu} \int \dot{u}^j \left[ \p_t\p_j G + \text{div} (u\p_j G) \right] dx \\
& \triangleq\sum_{i=1}^{3}N_i.
\ea\ee
By virtue of (\ref{1cpp2}), (\ref{p2}), and (\ref{1cp380}), and adapting the argument of Lemma \ref{g1}, we arrive at
\be\la{1cp43}\ba
N_1 + N_2 \le -\frac{5 \mu}{8} \| \na \dot{u} \|_{L^2}^2 + C \| \na u \|_{L^4}^4 + \frac{C}{\nu^4} \| P-P(\tilde{\n}) \|^4_{L^4}.
\ea\ee
and
\be\la{1cp44}\ba
N_3 & \le -\frac{\nu-\mu}{\nu} \frac{d}{dt} \left( \int G \p_i u \cdot \na u^i dx \right)
- \frac{\nu-\mu}{2\nu^2} \| \dot{G} \|^2_{L^2}
+ \frac{\mu}{4} \| \na \dot{u} \|_{L^2}^{2} \\
& \quad + C \| \sqrt{\n} \dot{u} \|^2_{L^2} \| \na u \|^2_{L^2} + \frac{C}{\nu} \| G \|^4_{L^4}
+ \frac{C}{\nu^3} \| P-P(\tilde{\n}) \|^4_{L^4} + C \| \na u \|^4_{L^4}.
\ea\ee
Substituting (\ref{1cp43}) and (\ref{1cp44}) into (\ref{1cp42}) and using (\ref{1cp310}), (\ref{1cp311}), and (\ref{1cp316}) imply
\be\la{1cp45}\ba
&\left(\int\rho|\dot{u}|^2dx + \frac{2(\nu-\mu)}{\nu} \int G \p_i u \cdot \na u^i dx \right)_t
+ \frac{3\mu}{4} \| \na \dot{u} \|_{L^2}^2 + \frac{\nu-\mu}{\nu^2} \| \dot{G} \|^2_{L^2} \\
& \le C \| \sqrt{\n} \dot{u} \|^2_{L^2} \| \na u \|^2_{L^2} + \frac{C}{\nu} \| G \|^4_{L^4}
+ \frac{C}{\nu^3} \| P-P(\tilde{\n}) \|^4_{L^4} + C \| \na u \|^4_{L^4} \\
& \le C \| \sqrt{\n} \dot{u} \|^2_{L^2} \left( 1 + \| \na u \|^2_{L^2} + \nu \| \div u \|^2_{L^2} \right)
+ \frac{C}{\nu^3} \| P-P(\tilde{\n}) \|^4_{L^4} \\
& \le C \| \sqrt{\n} \dot{u} \|^2_{L^2} \left( 1 + \| \na u \|^2_{L^2} + \nu \| \div u \|^2_{L^2} \right)
- \frac{C}{\nu} \left( \int (P-P(\tilde{\n}))^3 dx \right)_t.
\ea\ee
Multiplying (\ref{1cp45}) by $\si^2$ yields
\be\la{1cp46}\ba
&\left( \si^2 \int\rho|\dot{u}|^2dx + \frac{2(\nu-\mu)}{\nu} \si^2 \int G \p_i u \cdot \na u^i dx \right)_t
+ \frac{3\mu}{4} \si^2 \| \na \dot{u} \|_{L^2}^2 + \frac{\nu-\mu}{\nu^2} \si^2 \| \dot{G} \|^2_{L^2} \\
& \le 2 \si' \si \left(\int\rho|\dot{u}|^2dx + \frac{2(\nu-\mu)}{\nu} \int G \p_i u \cdot \na u^i dx \right)
+ C \si \| \sqrt{\n} \dot{u} \|^2_{L^2} \\
& \quad - \frac{C}{\nu} \left( \si^2 \int (P-P(\tilde{\n}))^3 dx \right)_t
+ \frac{C}{\nu} \si' \si \left( \int (P-P(\tilde{\n}))^3 dx \right).
\ea\ee
In addition, from (\ref{gw}), (\ref{p2}), and Young's inequality, we deduce that
\be\la{1cp47}\ba
\frac{2(\nu-\mu)}{\nu} \left| \int G \p_i u \cdot \na u^i dx \right|
& \le 2 \left| \int \left( G (\div u)^2 + 2 G \na u^1 \cdot \na^\bot u^2 \right) dx \right| \\
& \le \frac{C}{\nu^2} \| G \|^3_{L^3} + \frac{C}{\nu^2} \| P-P(\tilde{\n}) \|^3_{L^3}
+ C \| \na G \|_{L^2} \| \na u \|^2_{L^2} \\
& \le \frac{C}{\nu^2} \| G \|^2_{L^2} \| \na G \|_{L^2} + \frac{C}{\nu^2} \| P-P(\tilde{\n}) \|^3_{L^3}
+C \| \sqrt{\n} \dot{u} \|_{L^2} \| \na u \|^2_{L^2} \\
& \le \frac{1}{4} \| \sqrt{\n} \dot{u} \|^2_{L^2}
+\frac{C}{\nu^4} \| G \|^4_{L^2} + C \| \na u \|^4_{L^2} + C \\
& \le \frac{1}{4} \| \sqrt{\n} \dot{u} \|^2_{L^2} + C \| \na u \|^4_{L^2} + C.
\ea\ee

Integrating (\ref{1cp46}) over $(0,T)$, and using (\ref{1cpp2}), (\ref{1cp1}), and (\ref{1cp47}),
we get (\ref{1cp04}), thereby completing the proof of Lemma \ref{cpl4}.
\end{proof}

\begin{lemma}\la{cpl5}
There exists a positive constant $\nu_2$ depending only on
$\ga$, $\mu$, $E_0$, $\| \n_0 \|_{L^\infty}$, $\| \na u_0 \|_{L^2}$, and $\tilde{\n}$ such that,
if $(\n,u)$ is a classical solution to \eqref{ns}--\eqref{i3} satisfying \eqref{1cpp2}, then
\be\ba\la{1cp051}
\sup_{0\leq t\leq T} \|\n\|_{L^\infty} 
\leq \frac{3}{2} \left( 1 + \| \n_0 \|_{L^\infty} \right) e^{\frac{1}{\ga} \tilde{\n}^\ga},
\ea\ee
provided $\nu \geq \nu_2$.
\end{lemma}
\begin{proof}
First, rewriting $(\ref{ns})_1$ by using (\ref{gw}) as
\be\ba\la{1cp61}
\pa_t \log\n + u\cdot\na \log\n + \frac{1}{\nu}(\n^\ga - \tilde{\n}^\ga + G) = 0.
\ea\ee
Since $\n^\ga \geq \ga\log\n + 1$, we obtain
\be\ba\la{1cp62}
\frac{d}{ds} \log \n(s) + \frac{\ga}{\nu}\log \n
\le \frac{1}{\nu} \tilde{\n}^\ga + \frac{1}{\nu} \| G \|_{L^\infty}.
\ea\ee
Applying the maximum principle yields
\be\ba\la{1cp63}
\log \n(t)
& \le e^{-\frac{\ga}{\nu}t} \log \n(0)
+ \frac{1}{\nu} \tilde{\n}^\ga \int_0^t e^{-\frac{\ga}{\nu}(t-s)} ds
+ \frac{1}{\nu} \int_0^t e^{-\frac{\ga}{\nu}(t-s)} \| G \|_{L^\infty} ds.
\ea\ee

Next, we give a Poincar\'e-type inequality, whose proof can be found in \cite{LZ2}.
For any $v \in H^1(\rr)$, there exists a positive constant $C$ depending on $\tilde{\n}$ such that
\be\la{1cp640}\ba
\| v \|^2_{L^2} \le C \left( \Vert \sqrt{\rho} v \Vert^2_{L^2}+\| \n-\tilde{\rho} \|^2_{L^2} \| \na v \|^2_{L^2} \right).
\ea\ee
This combined with (\ref{p2}), (\ref{qkjsn}), (\ref{1cp1}) shows that for any $2 \le s<\infty$,
\be\la{1cp64}\ba
\| \na G \|_{L^s} + \| \na \o \|_{L^s} \le C \left( \| \sqrt{\n} \dot{u} \|_{L^2} + \| \na \dot{u} \|_{L^2} \right).
\ea\ee
In view of (\ref{gw}), (\ref{gn11}), (\ref{1cp02}), (\ref{1cp27}), and (\ref{1cp64}), we have
\be\ba\la{1cp65}
\int_0^{\si(t)} \| G \|_{L^\infty} ds
& \le C \int_0^{\si(t)} \| G \|_{L^2}^{3/8} \| \na G \|_{L^5}^{5/8} ds \\
& = C \int_0^{\si(t)} \| G \|_{L^2} ^{3/8} \left(\si^2\| \na G \|^{2}_{L^5} \right)^{5/16} \si^{ -5/8 } ds \\
& \le C \nu ^{3/8} \int_0^{\si(t)} \left( \si^2 \| \sqrt{\n} \dot u \|^{2}_{L^2} + \si^2 \| \na \dot u \|^{2}_{L^2} \right)^{5/16}
\si^{-5/8} ds \\
& \le C \nu^{3/8} \left( \int_0^1 \si^{-10/11} ds \right)^{11/16} \\
& \le C \nu^{3/8},
\ea\ee
and
\be\ba\la{1cp66}
\int_{\si(t)}^t \| G \|^3_{L^\infty} ds
& \le C \int_{\si(t)}^t \| G \|_{L^2} \| \na G \|^2_{L^4} ds \\
& \le C \nu^{1/2} \int_{\si(t)}^t \left( \| \sqrt{\n} \dot{u} \|^2_{L^2} + \| \na \dot{u} \|^2_{L^2} \right) ds \\
& \le C \nu^{1/2}.
\ea\ee
From (\ref{1cp65}), (\ref{1cp66}), and H\"older's inequality, we conclude that
\be\la{1cp67}\ba
\int_0^t e^{-\frac{\ga}{\nu}(t-s)} \| G \|_{L^\infty} ds
& = \int_0^{\si(t)} e^{-\frac{\ga}{\nu}(t-s)} \| G \|_{L^\infty} ds
+ \int_{\si(t)}^t e^{-\frac{\ga}{\nu}(t-s)} \| G \|_{L^\infty} ds \\
& \le C \int_0^{\si(t)} \| G \|_{L^\infty} ds
+ \left( \int_{\si(t)}^t e^{-\frac{3\ga}{2\nu}(t-s)} ds \right)^{\frac{2}{3}}
\left( \int_{\si(t)}^t \| G \|^3_{L^\infty} ds \right)^{\frac{1}{3}} \\
& \le C \nu^{\frac{5}{6}}.
\ea\ee
Putting (\ref{1cp67}) into (\ref{1cp63}) gives
\be\ba\la{1cp68}
\log \n(t)
& \le e^{-\frac{\ga}{\nu}t} \log \left( 1 + \| \n_0 \|_{L^\infty} \right)
+ \frac{1}{\ga} \tilde{\n}^\ga ( 1-e^{-\frac{\ga}{\nu}t} ) + C \nu^{-\frac{1}{6}} \\
& \le \log \left( 1 + \| \n_0 \|_{L^\infty} \right)
+ \frac{1}{\ga} \tilde{\n}^\ga + M_2 \nu^{-\frac{1}{6}},
\ea\ee
where $M_2$ is a positive constant depending only on 
$\tilde{\n}$, $\ga$, $\mu$, $\|\n_0\|_{L^\infty}$, $E_0$, and $\|\na u_0\|_{L^2}$, but is independent of $T$ and $\nu$.

Finally, set
\be\ba\la{1cp69}
\nu_2 \triangleq \max \left\{ \hat{\nu}_2, \left( \frac{M_2}{\log \frac{3}{2}} \right)^6 \right\},
\ea\ee
with $\hat{\nu}_2$ given in (\ref{1cp325}).
Then, when $\nu\geq\nu_2$, we obtain (\ref{1cp051}) and complete the proof of Lemma \ref{cpl5}.
\end{proof}

\section{A Priori Estimates \uppercase\expandafter{\romannumeral2}: Higher Order Estimates}

To extend the local classical solution globally in time,
in this section we establish the higher-order estimates, which are similar to those in \cite{HL,HLX2,LX2,LLL}.

\subsection{Far-field density is vacuum}

In this subsection, we assume that $\nu \ge \nu_1$ where $\nu_1$ is determined in Proposition \ref{l5},
and let $(\n,u)$ be a classical solution of (\ref{ns})--(\ref{i3}) on $\rr \times (0,T]$ satisfying (\ref{cp051}).

First, we assume that the initial data $(\n_0,u_0)$ 
satisfying $\eqref{ssol1}_1$.
\begin{lemma}\la{s21}
There is a positive constant $C$ depending only on  
$T,\ \ga,\ \mu,\ \nu,\ E_0,\ \| {\bar{x}}^a \rho_0 \|_{L^1}$, $\|\n_0\|_{L^\infty}$ and
$\| \na u_0 \|_{L^2}$, such that
\be\la{s411} \ba
\sup_{0\le t\le T} \si \int\n|\dot u|^2dx+\int_0^{ T} \si \|\na\dot u\|^2_{L^2}dt\le C.
\ea\ee	
\end{lemma}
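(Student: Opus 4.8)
The plan is to revisit the weighted energy estimate for $\sqrt{\n}\dot u$ already carried out in Lemma \ref{g1}, but now multiplying by the single power $\si$ rather than $\si^2$. Recall that in the proof of Lemma \ref{g1} we obtained the basic differential inequality \eqref{pd118}, namely
\be\la{s21plan1}\ba
&\left(\int\rho|\dot{u}|^2dx + \frac{2(\nu-\mu)}{\nu} \int G \p_i u \cdot \na u^i dx \right)_t
+ \frac{3\mu}{4} \| \na \dot u \|_{L^2}^2 + \frac{\nu-\mu}{\nu^2} \| \dot G \|^2_{L^2} \\
&\quad \le C \| \na u \|^2_{L^2} + C \| \sqrt{\n} \dot u \|^2_{L^2} \| \na u \|^2_{L^2}
+ \frac{C}{\nu^2} \| G \|^4_{L^4} + \frac{C}{\nu^2} \| P-P(\tilde{\n}) \|^4_{L^4} + C \| \na u \|^4_{L^4},
\ea\ee
and here $\tilde{\n}=0$, so the pressure terms simplify. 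The first step is to multiply \eqref{s21plan1} by $\si=\min\{1,t\}$ (note $0\le\si'\le1$ and $\si'=0$ for $t\ge1$), producing the time-derivative of $\si\int\n|\dot u|^2 + \frac{2(\nu-\mu)}{\nu}\si\int G\p_iu\cdot\na u^i$ plus the good terms $\frac{3\mu}{4}\si\|\na\dot u\|_{L^2}^2$, against a right-hand side that now contains the extra term $\si'(\int\n|\dot u|^2 + \frac{2(\nu-\mu)}{\nu}\int G\p_iu\cdot\na u^i)$. Since $\si'$ is supported on $[0,1]$, this term is bounded by $C\int\n|\dot u|^2 + C\|\sqrt{\n}\dot u\|_{L^2}\|\na u\|_{L^2}^2 + C\nu^{-4}\|G\|_{L^2}^4 + C\|\na u\|_{L^2}^4$ using the estimate \eqref{pd120} exactly as in Lemma \ref{g1}; after absorbing the $\si$-free occurrence of $\int\n|\dot u|^2$ into the time-integral via \eqref{cp031}, this causes no trouble.

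Next I would control the integrated right-hand side over $(0,T)$. The term $\int_0^T\|\na u\|_{L^2}^2\,dt$ is bounded using \eqref{cp39} together with \eqref{cp031} (which gives $\sup_t A_1^2 \le C$, hence $\|\na u\|_{L^2}^2 \le CA_1^2 + C \le C$) and $T<\infty$; the term $\int_0^T\|\sqrt{\n}\dot u\|_{L^2}^2\|\na u\|_{L^2}^2\,dt \le C\int_0^T\|\sqrt{\n}\dot u\|_{L^2}^2\,dt \le C$ again by \eqref{cp031}. For $\nu^{-2}\int_0^T\|G\|_{L^4}^4\,dt$ one uses \eqref{gn11} and \eqref{p2} to write $\|G\|_{L^4}^4 \le C\|G\|_{L^2}^2\|\na G\|_{L^2}^2 \le C\|G\|_{L^2}^2\|\sqrt{\n}\dot u\|_{L^2}^2$, and since $\|G\|_{L^2}^2 \le C\nu^2 A_1^2 \le C\nu^2$ the whole thing is $\le C\int_0^T\|\sqrt{\n}\dot u\|_{L^2}^2\,dt \le C$. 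The term $\int_0^T\|\na u\|_{L^4}^4\,dt$ is handled as in \eqref{pd122}: $\|\na u\|_{L^4}^4 \le C\nu^{-4}\|G\|_{L^4}^4 + C\|\o\|_{L^2}^2\|\na\o\|_{L^2}^2 \le C\|\sqrt{\n}\dot u\|_{L^2}^2(1+\|G\|_{L^2}^2\nu^{-4}+\|\o\|_{L^2}^2)$, again integrable in time by \eqref{cp031}. Finally, to convert $\sup_t(\si\int\n|\dot u|^2 + \frac{2(\nu-\mu)}{\nu}\si\int G\p_iu\cdot\na u^i)$ into $\sup_t\si\int\n|\dot u|^2$ one absorbs the $G$-correction term using \eqref{pd120} (multiplied by $\si\le1$): it is bounded by $\frac14\|\sqrt{\n}\dot u\|_{L^2}^2 + C\nu^{-4}\|G\|_{L^2}^4 + C\|\na u\|_{L^2}^4 \le \frac14\|\sqrt{\n}\dot u\|_{L^2}^2 + C$, and the $\frac14\|\sqrt{\n}\dot u\|_{L^2}^2$ is absorbed into the left-hand side's $\si\int\n|\dot u|^2$ term times a constant. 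Then Gr\"onwall's inequality (the coefficient multiplying $\si\int\n|\dot u|^2$ on the right is $C\|\na u\|_{L^2}^2 + C$, which is $L^1$ in $t$) closes the argument.

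The main obstacle, or rather the main point requiring care, is the initial-layer behavior: unlike the $\si^2$-weighted estimate \eqref{pd11} which starts from a fully controlled situation, the $\si$-weighted estimate must be compatible with the fact that we only know $A_1^2(t)$ is bounded (via \eqref{cp031}) rather than $\si A_1^2$ being small. Concretely, the danger is the term $\si' \int\n|\dot u|^2\,dt$ integrated over $(0,1)$: a priori $\int\n|\dot u|^2$ need not be integrable near $t=0$ without a weight. This is precisely why the $\si$-weighted bound \eqref{cp031} — which gives $\int_0^T\|\sqrt{\n}\dot u\|_{L^2}^2\,dt \le C$ (the \emph{unweighted} time-integral is finite, by the first line of \eqref{cp031}) — is the key input: it shows $\int_0^1\si'\int\n|\dot u|^2\,dt \le \int_0^1\int\n|\dot u|^2\,dt \le C$, so no blow-up occurs. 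Everything else is a routine repackaging of the computations already done for Lemma \ref{g1} with one fewer power of $\si$, so once this compatibility is noted the proof is short.
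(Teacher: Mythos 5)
Your proposal is correct and follows essentially the same route as the paper's own proof of Lemma \ref{s21}: the paper likewise multiplies the differential inequality \eqref{pd118} (after simplifying it to \eqref{s417} via \eqref{s418}) by $\sigma$, integrates over $(0,T)$, controls the $G$-correction term by \eqref{pd120}, and uses the unweighted bound \eqref{cp031} together with $\int_0^T\|\nabla u\|_{L^4}^4\,dt\le C$ (its \eqref{s419}), which is admissible here precisely because the constant is allowed to depend on $\|\nabla u_0\|_{L^2}$ --- the point you correctly single out as the key difference from Lemma \ref{g1}. The only cosmetic omission is the pressure contributions $\nu^{-2}\|P\|_{L^4}^4$ (and $\nu^{-2}\|P\|_{L^3}^3$ coming from \eqref{pd120}), which are trivially bounded since $0\le\rho\le 2\|\rho_0\|_{L^\infty}$ and $\rho^\gamma$ is bounded in $L^\infty(0,T;L^1)$ by \eqref{cp1}.
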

\begin{proof}
First, we deduce from (\ref{cp051}), (\ref{gw}), (\ref{cp1}) and (\ref{cp031}) that
\be\la{s418} \ba
\sup_{0\le t\le T} \left( \| \n \|_{L^\infty} + \| \na u \|_{L^2} \right) 
+ \int_0^T \left( \| \na u\|^2_{L^2}+ \| \n^{1/2} \dot{u} \|^2_{L^2} \right)dt \le C.
\ea \ee
Moreover, it follows from (\ref{pd118}) and (\ref{s418}) that
\be\la{s417}\ba
&\left(\int\rho|\dot{u}|^2dx + \frac{2(\nu-\mu)}{\nu} \int G \p_i u \cdot \na u^i dx \right)_t
+ \frac{3\mu}{4} \| \na \dot{u} \|_{L^2}^2 \\
& \le C \| \na u \|^2_{L^2} + C \| \sqrt{\n} \dot{u} \|^2_{L^2}
+ \frac{C}{\nu^2} \| P-P(\tilde{\n}) \|^4_{L^4} + C \| \na u \|^4_{L^4}.
\ea\ee
Combining (\ref{p2}), (\ref{s418}), (\ref{dc1}) and (\ref{gn11}) implies
\be\la{s419} \ba
\int_0^{T}\|\na u\|^4_{L^4}dt & \le C\int_0^{ T}\|\div u\|^4_{L^4} +\|\o \|^4_{L^4}dt\\
&\le C\int_0^{T} \|G\|^2_{L^2}\|\nabla G\|^2_{L^2}
+ \| P \|^4_{L^4} + \|\o \|^2_{L^2}\|\nabla \o \|^2_{L^2}dt \\
&\le C + C\int_0^{T} \|\nabla G\|^2_{L^2} + \|\nabla \o \|^2_{L^2}dt \\
&\le C.
\ea\ee
Multiplying (\ref{s417}) by $\si$ and integrating the resulting equation over $(0,T)$,
after using (\ref{pd120}), (\ref{s418}) and (\ref{s419}), we obtain (\ref{s411}).
\end{proof}

\begin{lemma}\la{s22}
For any $2 < p < \infty$, there exists a positive constant $C$ depending only on 
$T,\ q,\ \ga,\ \mu,\ \nu,\ E_0,\ N_0$, $\| {\bar{x}}^a \rho_0 \|_{L^1}$, 
$\|\n_0\|_{L^\infty}$ and $\| \na u_0 \|_{L^2}$, such that
\be\la{s431} \ba
&\sup_{0\le t\le T} \left( \| \n \|_{H^1 \cap W^{1,q}} + \| \na u \|_{L^2} + t \| \na^2 u \|^2_{L^2} \right) \\
& + \int_0^T \left( \|\nabla^2 u\|^{2}_{L^2}+\|\nabla^2 u\|^{(q+1)/q}_{L^q}+t \|\nabla^2 u\|_{L^q}^2 \right) dt\le C.
\ea\ee
\end{lemma}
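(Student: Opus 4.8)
The goal is to propagate the density regularity $\n \in H^1 \cap W^{1,q}$ and simultaneously upgrade the velocity regularity to $\na u \in L^2(0,T;H^1)$ with the weighted-in-time bounds indicated. The natural order is: (i) control $\| \na u \|_{L^q}$ and $\| \na^2 u \|_{L^2}$ in terms of $\| \n \dot u \|_{L^2}$ and the pressure, via the elliptic estimates of Lemma \ref{estg} and the div-curl estimate Lemma \ref{dc}; (ii) derive the transport estimate for $\| \na \n \|_{L^2 \cap L^q}$ along the flow, which couples back to $\| \na u \|_{L^\infty}$; (iii) close the loop using the Beale-Kato-Majda inequality Lemma \ref{bkm} together with a Gr\"onwall argument. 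Concretely, I would first write, from $(\ref{ns})_2$ rewritten as $\n \dot u = \na G + \mu \na^\bot \o$ and the definition $G = \nu \div u - (P - P(\tilde\n))$ (here with $\tilde\n = 0$, so $P(\tilde\n)=0$), the bound
\be\ba\nonumber
\| \na^2 u \|_{L^r} \le C\big( \| \na G \|_{L^r} + \| \na \o \|_{L^r} + \| \na P \|_{L^r} \big) \le C\big( \| \n \dot u \|_{L^r} + \| \na P \|_{L^r} \big),
\ea\ee
for $r = 2$ and $r = q$, using Lemma \ref{estg} with $k=1$, Lemma \ref{dc}, and the elementary identity $\nu \na \div u = \na G + \na P$. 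The $L^2$ case is controlled directly by (\ref{cp031}) and (\ref{s418}); the $L^q$ case requires $\| \n \dot u \|_{L^q}$, which is where Lemma \ref{l4} (with $r = q$) and the Gagliardo-Nirenberg interpolation $\| \n \dot u \|_{L^q} \le C \| \sqrt\n \dot u \|_{L^2}^{\theta} \| \na \dot u \|_{L^2}^{1-\theta}$-type bounds (combined with $\| \n \dot u \|_{L^2} \le C \| \sqrt\n \dot u \|_{L^2}$ since $\n$ is bounded) enter, together with (\ref{s411}) for the time-weighted $\| \na \dot u \|_{L^2}$ control.

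Next I would estimate $\| \na \n \|_{L^r}$ for $r \in \{2, q\}$. Differentiating $(\ref{ns})_1$ spatially gives a transport equation for $\na \n$ with a source $\na \n \cdot \na u + \n \na \div u$, so along characteristics
\be\ba\nonumber
\frac{d}{dt} \| \na \n \|_{L^r} \le C \| \na u \|_{L^\infty} \| \na \n \|_{L^r} + C \| \n \na \div u \|_{L^r},
\ea\ee
and $\| \n \na \div u \|_{L^r} \le C \| \na \div u \|_{L^r} \le C( \| \na G \|_{L^r} + \| \na P \|_{L^r} )/\nu$, with $\| \na P \|_{L^r} \le C \| \n \|_{L^\infty}^{\ga - 1} \| \na \n \|_{L^r}$. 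Thus the pressure gradient term can be absorbed into the Gr\"onwall factor, and the whole estimate hinges on a bound for $\int_0^T \| \na u \|_{L^\infty} \, dt$. For this I invoke Lemma \ref{bkm}: $\| \na u \|_{L^\infty} \le C( \| \div u \|_{L^\infty} + \| \o \|_{L^\infty} ) \log(e + \| \na^2 u \|_{L^q}) + C \| \na u \|_{L^2} + C$, and I control $\| \div u \|_{L^\infty} + \| \o \|_{L^\infty}$ by $\| G \|_{L^\infty} + \| P \|_{L^\infty} + \| \o \|_{L^\infty}$, which via Gagliardo-Nirenberg (\ref{gn12}) and Lemma \ref{estg} is bounded by a power of $\| \sqrt\n \dot u \|_{L^2} + \| \na \dot u \|_{L^2}$ (plus $\| \n \|_{L^\infty}$), integrable in time after inserting the $\si$-weight from (\ref{s411}) and handling the $\log$ by the standard Gr\"onwall-with-logarithm trick (bounding $\log(e + \| \na^2 u \|_{L^q})$ by $\log(e + \sup_{[0,t]} \| \na \n \|_{L^q} + \ldots)$ and closing).

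The main obstacle I anticipate is the coupling between $\| \na \n \|_{L^q}$ and $\| \na^2 u \|_{L^q}$: the velocity estimate needs $\| \na P \|_{L^q} \sim \| \na \n \|_{L^q}$, the density estimate needs $\int \| \na u \|_{L^\infty}$ which through BKM needs $\log \| \na^2 u \|_{L^q}$, and the integrability in time of the high-norm quantities is only borderline (note the exponents $(q+1)/q$ and the $t$-weights), so the bookkeeping of which quantities are $L^1_t$, $L^2_t$, or only $L^{(q+1)/q}_t$ near $t = 0$ must be done carefully — in particular using $\si = \min\{1,t\}$ to absorb the singularity of $\| \na \dot u \|_{L^2}$ at the origin while still extracting an $L^{(q+1)/q}(0,T)$ bound for $\| \na^2 u \|_{L^q}$ without the weight. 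Once $\int_0^T \| \na u \|_{L^\infty} dt \le C$ is in hand, Gr\"onwall closes the density estimate, which feeds back to give $\| \na P \|_{L^2 \cap L^q} \le C$, hence $\| \na^2 u \|_{L^2} \le C$, $\int_0^T \| \na^2 u \|_{L^q}^{(q+1)/q} dt \le C$, and — after multiplying the relevant differential inequality by $t$ and using (\ref{s411}) again — the time-weighted bounds $\sup_{[0,T]} t \| \na^2 u \|_{L^2}^2 + \int_0^T t \| \na^2 u \|_{L^q}^2 dt \le C$, completing (\ref{s431}).
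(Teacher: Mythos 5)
Your plan follows essentially the same route as the paper's proof: the elliptic/div-curl bound $\|\na^2 u\|_{L^p}\le C\|\n\dot u\|_{L^p}+C\|\na\n\|_{L^p}$, the transport inequality for $\|\na\n\|_{L^p}$, the Beale--Kato--Majda logarithmic Gr\"onwall closure, and the use of the weighted Lemma \ref{l4} together with the $\si$-weighted bound (\ref{s411}) to control $\|\n\dot u\|_{L^q}$ in time. The only point you leave schematic --- the borderline time integrability near $t=0$ --- is exactly what the paper settles via the interpolation (\ref{s422}) between $\|\n\dot u\|_{L^2}$ and $\|\n\dot u\|_{L^{q^2}}$ (the latter via Lemma \ref{l4}), which yields $\int_0^T\big(\|\n\dot u\|_{L^q}^{1+1/q}+t\|\n\dot u\|_{L^q}^2\big)dt\le C$ and hence the stated exponents; your outline is consistent with this.
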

\begin{proof}
First, for any $p \in [2,q]$, we deduce from $(\ref{ns})_1$ that $|\na \n|^p$ satisfies
\be\la{s432}\ba
& (|\nabla\n|^p)_t + \text{div}(|\nabla\n|^p u)+ (p-1)|\nabla\n|^p\text{div}u  \\
&+ p|\nabla\n|^{p-2} \p_i\n \p_i u^j \p_j\n +
p \n |\nabla\n|^{p-2}\p_i\n  \p_i \text{div}u = 0.
\ea\ee
Integrating (\ref{s432}) over $\rr$ gives
\be\la{s433} \ba
\frac{d}{dt} \norm[L^p]{\nabla\n}  
&\le C \norm[L^{\infty}]{\nabla u}  \norm[L^p]{\nabla\n} +C\|\na^2 u\|_{L^p} \\ 
&\le C(1+\norm[L^{\infty}]{\nabla u} ) \norm[L^p]{\nabla\n}+C \|\n\dot u\|_{L^p}, \ea\ee 
where we have used the following estimate:
\be\la{s434}\ba
\|\na^2 u\|_{L^p}
&\le C(\|\na \div u\|_{L^p}+\|\na \o\|_{L^p}) \\
&\le C (\| \nabla G\|_{L^p}+ \|\nabla P \|_{L^p})+ C\|\na \o\|_{L^p} \\
&\le C\|\n\dot u\|_{L^p} + C  \|\nabla \n \|_{L^p},
\ea\ee
owing to (\ref{p2}) and (\ref{dc1}).

In addition, with the help of (\ref{gn11}) and (\ref{s418}), we have
\be\la{s435}\ba 
\|\div u\|_{L^\infty}+\|\o\|_{L^\infty} 
& \le C + C \|\na G\|_{L^q}^{q/(2(q-1))} +C \|\na \o\|_{L^q}^{q/(2(q-1))} \\
& \le C + C \|\n\dot u\|_{L^q}^{q/(2(q-1))},
\ea\ee 
which together with Lemma \ref{bkm} and (\ref{s434}) implies
\be\la{s436}\ba   \|\na u\|_{L^\infty }  
&\le C\left(\|{\rm div}u\|_{L^\infty }+ \|\o\|_{L^\infty } \right)\log(e+\|\na^2 u\|_{L^q}) +C\|\na u\|_{L^2} +C \\
&\le C\left(1+\|\n\dot u\|_{L^q}^{q/(2(q-1))}\right)\log(e+\|\rho \dot u\|_{L^q} +\|\na \rho\|_{L^q}) +C\\
&\le C\left(1+\|\n\dot u\|_{L^q} \right) \log(e+ \|\na \rho\|_{L^q}).
\ea\ee
Then, taking $p=q$ in (\ref{s433}) and applying, we derive
\be\la{s437}\ba
\frac{d}{dt} \log(e+ \|\na \rho\|_{L^q})
\le C\left(1+\|\n\dot u\|_{L^q} \right) \log(e+ \|\na \rho\|_{L^q}).
\ea\ee 
On the other hand, by virtue of (\ref{esnr2}) and H\"{o}lder's inequality, it holds that
\be\la{s422}\ba
\| \rho \dot u\|_{L^q} 
& \le C\| \rho \dot u\|_{L^2}^{2(q-1)/(q^2-2)}
\| \n \dot{u} \|_{L^{q^2}}^{q(q-2)/(q^2-2)} \\ 
& \le C\| \rho \dot u\|_{L^2}^{2(q-1)/(q^2-2)} 
\left( \| \sqrt{\n} \dot u\|_{L^2}+\| \na \dot u\|_{L^2} \right)^{q(q-2)/(q^2-2)} \\ 
& \le C\| \sqrt{\n}  \dot u\|_{L^2} + C\| \sqrt{\n} \dot u\|_{L^2}^{2(q-1)/(q^2-2)}
\|\na \dot u\|_{L^2}^{q(q-2)/(q^2-2)},
\ea\ee
which together with (\ref{s411}), (\ref{s422}) and Young's inequality yields
\be\la{s423} \ba
& \int_0^T \left( \|\rho \dot u\|^{1+1 /q}_{L^q}+t\| \n \dot u\|^2_{L^q} \right) dt \\
& \le C+C \int_0^T\left( \| \sqrt{\n}  \dot u\|_{L^2}^2 +  t\|\na \dot u\|_{L^2}^2+ 
t^{-(q^3-q^2-2q)/(q^3-q^2-2q+2)} \right)dt \\ 
& \le C.
\ea\ee

Consequently, by using (\ref{s437}), (\ref{s423}) and Gr\"onwall's inequality, we derive
\be\la{s439}\ba
\sup\limits_{0\le t\le T} \|\nabla
\rho\|_{L^q}\le  C,
\ea\ee
which together with (\ref{s411}), (\ref{s434}), (\ref{s423}) and (\ref{s439}) results in
\be\la{s4310}\ba 
\int_0^T \left( \|\nabla^2 u\|^{(q+1)/q}_{L^q}+t \|\nabla^2 u\|_{L^q}^2 \right) dt \le C.
\ea\ee

Moreover, choosing $p=2$ in (\ref{s433}) and applying (\ref{s411}), (\ref{s418}), (\ref{s423}) and Gr\"onwall's inequality, we obtain
\be\la{}\ba
\sup\limits_{0\le t\le T} \left( \|\nabla \rho\|_{L^2} + t \| \na^2 u \|^2_{L^2} \right)
+ \int_0^T \| \na^2 u \|^2_{L^2} dt
\le C,
\ea\ee
which together with (\ref{s439}), (\ref{s4310}) and (\ref{s418}) gives (\ref{s431}), 
and we finish the proof of Lemma \ref{s22}.
\end{proof}

\begin{lemma}\la{s23}
There exists a constant $C $ depending only on $\mu,\ \ga,\ T,\ N_0,\ E_0,\ q$, 
$\|\n_0\|_{L^\infty}$, $\| \na u_0 \|_{L^2}$,
$\| {\bar{x}}^a \rho_0 \|_{L^1}$, and $\|\na(\bar x^a\n_0)\|_{L^2\cap L^q}$, such that
\be\la{s05}\ba
&\sup_{0\le t\le T} \|  \bar x^a\n \|_{L^1\cap H^1\cap W^{1,q}}  \le C.
\ea\ee
\end{lemma}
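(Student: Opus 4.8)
The plan is to propagate the weighted bound on $\bar x^a\n$ from the initial data by differentiating the transport equation $(\ref{ns})_1$ multiplied by a weight, in the same spirit as Lemma \ref{l4} and the estimates in \cite{LX2,LLL}. First I would note that $\|\bar x^a\n\|_{L^1}$ is already controlled by (\ref{cp2}), so the work is entirely in the $L^2$ and $L^q$ bounds of $\na(\bar x^a\n)$. To set this up, I would compute the equation satisfied by $w:=\bar x^a\n$: since $\n_t+u\cdot\na\n+\n\div u=0$, one gets $w_t+u\cdot\na w = \bar x^a\n\,(a\,\bar x^{-1}u\cdot\na\bar x) - \n\bar x^a\div u = w\,\big(a\,(u\cdot\na\bar x)/\bar x - \div u\big)$, so $w$ solves a transport equation with a zeroth-order term involving $u\cdot\na\bar x/\bar x$ and $\div u$.

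Next I would differentiate this equation in space and test against $|\na w|^{p-2}\na w$ for $p\in\{2,q\}$, mimicking (\ref{s432})--(\ref{s433}). This produces
\[
\frac{d}{dt}\|\na w\|_{L^p}\le C\big(1+\|\na u\|_{L^\infty}\big)\|\na w\|_{L^p}
+ C\,\|\,\na\big(a(u\cdot\na\bar x)/\bar x-\div u\big)\,\|_{L^p}\,\|w\|_{L^\infty}\cdot(\cdots),
\]
where the commutator/source terms must be handled carefully. The key quantities to bound are: $\|\na u\|_{L^\infty}$, which is already in $L^1(0,T)$ by (\ref{s436}) and the estimates of Lemma \ref{s22}; the weight factors $|\na\bar x|/\bar x\lesssim 1$ and $|\na^2\bar x|\,\bar x/|\na\bar x|^2\lesssim$ bounded, which follow from the explicit form (\ref{wsol02}) of $\bar x$; and terms of the form $\|\bar x^{-1}u\|_{L^\infty}$ or $\|\bar x^{a-1}u\,\na\n\|_{L^p}$, which one controls by interpolating the weighted inequality (\ref{WPE1}) (or Lemma \ref{esnr}) together with the already-established $\|\na u\|_{L^2}$, $\|\na^2 u\|_{L^2\cap L^q}$ bounds from Lemma \ref{s22}. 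The point is that every term is either absorbed into $C(1+\|\na u\|_{L^\infty})\|\na w\|_{L^p}$ or bounded by a time-integrable quantity times lower-order norms, so Gr\"onwall's inequality closes the estimate for $p=2$ first and then for $p=q$.

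The main obstacle I anticipate is the treatment of the source term $\na(\bar x^a\n\,u\cdot\na\bar x/\bar x)$ in $L^q$: it contains $\bar x^{a-1}\na\n\cdot u$ and $\bar x^{a-1}\n\na u$ and $\bar x^{a}\n\,u\cdot(\text{second derivatives of }\bar x)/\bar x$, and these must be bounded by $\|\na w\|_{L^q}$ (times an $L^1_t$ factor) plus controlled remainders. The factor $\bar x^{a-1}\na\n\cdot u$ needs $u$ in $L^\infty$ against a near-full-weight gradient of density; one writes $\bar x^{a-1}|\na\n| = \bar x^{-1}|\na w| + (\text{lower order})$ and uses $\|u\bar x^{-1+\epsilon'}\|_{L^\infty}\le C$ for small $\epsilon'$ via (\ref{WPE1}) with the logarithmic slack in (\ref{wsol02}) — this is exactly why the peculiar logarithmic weight $\bar x=(e+|x|^2)^{1/2}\log^2(e+|x|^2)$ was chosen, so that $\bar x^{-1}$ carries enough logarithmic decay to kill the borderline growth. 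Once this weighted $L^\infty$ bound on $u$ is in place, the rest is a routine Gr\"onwall argument paralleling Lemma \ref{s22}, and combining with (\ref{cp2}) yields (\ref{s05}).
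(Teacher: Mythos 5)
Your proposal follows essentially the same route as the paper: there, too, one writes the transport equation for $v=\bar x^a\n$, differentiates it, bounds $\|\na v\|_{L^p}$ for $p\in\{2,q\}$ using the weighted estimates $\|u\bar x^{-\ve}\|_{L^{s/\ve}}\le C$ obtained from (\ref{WPE1}) together with the $\na u$ bounds of Lemma \ref{s22}, and closes with Gr\"onwall's inequality and (\ref{cp2}). Two cosmetic caveats: the paper closes the $L^q$ estimate first, because the source terms carry $\|v\|_{L^\infty}$, which is controlled through $\|\na v\|_{L^q}$ rather than $\|\na v\|_{L^2}$ (to do $p=2$ first you would instead interpolate $\|v\|_{L^s}$ between $\|v\|_{L^1}$ and $\|\na v\|_{L^2}$), and the weighted bound on $u$ you invoke is not a uniform-in-time $L^\infty$ bound but one of size $C(1+\|\na u\|_{W^{1,q}})$, which is all Gr\"onwall needs since this quantity is integrable in time by (\ref{s431}).
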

\begin{proof}
First, multiplying $(\ref{ns})_1$ by ${\bar{x}}^a$ and integrating over $\rr$, we derive after using (\ref{cp1}) that
\be\la{cp12}\ba
\frac{d}{dt} \int \n {\bar{x}}^a dx 
& \le C \int \n |u| {\bar{x}}^{a-1} \log^2(e+|x|^2) dx \\
& \le C \left( \int \n {\bar{x}}^{2a-2} \log^4(e+|x|^2) dx \right)^{\frac{1}{2}}
\left( \int \n |u|^2 dx \right)^{\frac{1}{2}} \\
& \le C \left( \int \n {\bar{x}}^a dx \right)^{\frac{1}{2}},
\ea\ee
which together with Gr\"onwall's inequality gives
\be\ba\la{cp13}
\sup_{0\leq t\leq T} \int \n {\bar{x}}^a dx \leq C.
\ea\ee
From (\ref{WPE1}), (\ref{pt1}), and (\ref{s431}), we deduce that for any $\ve\in(0,1]$ and any $s>2$,
\be\la{s51}\ba
\|u\bar x^{-\ve}\|_{L^{s/\ve}}\le C(\ve,s).
\ea\ee
Moreover, according to $(\ref{ns})_1$, we derive that $ v\triangleq\n\bar x^a$ satisfies
\bnn\ba
v_t+u\cdot\na v-a vu\cdot\na \log \bar x+v\div u=0,
\ea\enn
which together with (\ref{s51}) and H\"older's inequality implies that for any $p\in [2,q]$,
\be\la{s52}\ba 
(\|\na v\|_{L^p} )_t
& \le C(1+\|\na u\|_{L^\infty}+\|u\cdot \na \log \bar x\|_{L^\infty}) \|\na v\|_{L^p} \\
&\quad +C\|v\|_{L^\infty}\left( \||\na u||\na\log \bar x|\|_{L^p}+\||  u||\na^2\log \bar x|\|_{L^p}+\| \na^2 u \|_{L^p}\right)\\
& \le C(1 +\|\na u\|_{W^{1,q}})  \|\na v\|_{L^p} \\
& \quad + C \|v\|_{L^\infty} \left( \|\na u\|_{L^p} 
+ \| u \bar x^{-2/5} \|_{L^{4p}} \|\bar x^{-3/2}\|_{L^{4p/3}} + \|\na^2 u\|_{L^p}\right) \\
& \le C(1 +\|\na^2u\|_{L^p}+\|\na u\|_{W^{1,q}})(1+ \|\na v\|_{L^p}+\|\na v\|_{L^q}). \ea\ee
Taking $p=q$ in (\ref{s52}) and applying (\ref{s431}) and Gr\"onwall's inequality, we obtain
\be\la{s53}\ba
\sup\limits_{0\le t\le T}\|\na (\n \bar x^a)\|_{L^q} \le C.
\ea\ee

In addition, choosing $p=2$ in (\ref{s52}) and combining (\ref{s431}), (\ref{s53}) and Gr\"onwall's inequality yields
\bnn
\sup\limits_{0\le t\le T}\|\na(\n \bar x^a)\|_{L^2 } \le C,
\enn
which together with (\ref{s53}) and (\ref{cp13}) implies (\ref{s05}).
This completes the proof of Lemma \ref{s23}.
\end{proof}

\begin{lemma}\la{s24}  
There exists a constant $C $ depending only on $\mu,\ \ga,\ T,\ N_0,\ E_0,\ q$, 
$\|\n_0\|_{L^\infty}$, $\| \na u_0 \|_{L^2}$,
$\| {\bar{x}}^a \rho_0 \|_{L^1}$, and $\|\na(\bar x^a\n_0)\|_{L^2\cap L^q}$, such that
\be\la{s06}\ba
\sup_{0\leq t\leq T }t\left(\| \sqrt{\n} u_t\|^2_{L^2}+\|\na u\|^2_{H^1} \right)+\int_0^Tt\|\na u_t\|_{L^2}^2dt\le C.
\ea\ee
\end{lemma}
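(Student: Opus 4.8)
The plan is to run the standard $\sqrt{\n}u_t$-energy estimate with the time weight $\si = \min\{1,t\}$ (here simply $t$ for $t\le 1$, and then $t$ is bounded so the weight is harmless for $t\ge 1$), exploiting all the lower-order bounds already in hand: the energy bound \eqref{cp1}, the $\n\bar x^a$ bounds \eqref{cp2}, \eqref{s05}, the effective-flux/vorticity estimates \eqref{cp031}--\eqref{cp032}, the material-derivative bounds \eqref{pd11} and \eqref{s411}, and the elliptic/$W^{1,q}$ estimates \eqref{s431} together with the weighted inequalities of Lemmas \ref{l4} and \ref{WPE}. I would first record that, since $u_t = \dot u - u\cdot\na u$, controlling $\sqrt{\n}u_t$ and $\na u_t$ reduces to controlling $\sqrt{\n}\dot u$, $\na\dot u$ and the convective terms $\sqrt\n\,|u||\na u|$, $\na(u\cdot\na u)$; the first two are supplied (after multiplying by $t$) by \eqref{s411}, and the convective terms are handled via Gagliardo--Nirenberg \eqref{gn11}, the $L^\infty$-in-time bound on $\na u$ and the $L^2(0,T;H^1)$ bound on $\na u$ from \eqref{s431}, plus the weighted estimate \eqref{cp4} to pay for the $\n$-weight near spatial infinity where Poincaré fails.

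The core computation is to differentiate $(\ref{ns})_2$ in $t$, multiply by $u_t$, integrate over $\rr$, and use $(\ref{ns})_1$ to rewrite $\rho_t = -\div(\rho u)$. This produces
\be\la{s24pf1}\ba
\frac12\frac{d}{dt}\int \n|u_t|^2dx + \int\left(\mu|\na u_t|^2 + (\mu+\lm)(\div u_t)^2\right)dx
= -\int \n u\cdot\na\left(u_t\cdot\big(u_t+u\cdot\na u\big)\right)dx \\
\quad -\int \n u_t\cdot(u\cdot\na u)_t dx + \int P_t\,\div u_t\,dx,
\ea\ee
and then every term on the right is estimated by $\frac{\mu}{4}\|\na u_t\|_{L^2}^2$ plus lower-order quantities. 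For the density-weighted terms I would use Hölder together with Lemma \ref{l4} (in the form \eqref{cp4}) to bound $\|\n^{1/2}u_t\|_{L^4}$ and $\|\n^{1/2}(u\cdot\na u)\|_{L^2}$; for $\int P_t\div u_t$ use $P_t = -\div(Pu) - (\g-1)P\div u$ and integrate by parts, controlling it by $\|\na u\|_{L^4}^2 + \|\div u_t\|_{L^2}\|\na u\|_{L^2}$ which is absorbed. After collecting terms one arrives at a differential inequality of the schematic form
\be\la{s24pf2}\ba
\frac{d}{dt}\left(t\int\n|u_t|^2dx\right) + t\int\mu|\na u_t|^2dx
\le \int\n|u_t|^2dx + t\,\big(B(t)\big)\int\n|u_t|^2dx + t\,C(t),
\ea\ee
where $\int_0^T B(t)\,dt \le C$ and $\int_0^T tC(t)\,dt\le C$, the latter using \eqref{s411}, \eqref{s419}, \eqref{s431} and \eqref{cp4}; Grönwall then yields the bound on $\sup_t t\|\sqrt\n u_t\|_{L^2}^2 + \int_0^T t\|\na u_t\|_{L^2}^2dt$. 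Finally, $t\|\na u\|_{H^1}^2$ is recovered from the elliptic estimate \eqref{s434} applied to \eqref{pd12}, writing $\|\na^2 u\|_{L^2}\le C\|\n\dot u\|_{L^2} + C\|\na\n\|_{L^2}$ and using \eqref{s411}, \eqref{s05}; combined with \eqref{cp1} this closes \eqref{s06}.

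The main obstacle is the failure of Poincaré's inequality on $\rr$: the terms $\int\n u\cdot\na(\cdots)$ and $\int\n u_t\cdot(u\cdot\na u)_t$ contain factors of $u$ and $u_t$ that are not controlled in $L^2$ by $\|\na\cdot\|_{L^2}$ alone. This is precisely where Lemma \ref{l4} is essential — it lets me write $\|\n^{1/4}u_t\|_{L^4}^2 \le C(1+t)^4(\|\sqrt\n u_t\|_{L^2}^2 + \|\na u_t\|_{L^2}^2)$ and similarly trade the bare $u$ in convective terms for $\|\na u\|_{L^2}$ at the cost of polynomial-in-$t$ constants (harmless on a finite interval). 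Care is needed so that the $\|\na u_t\|_{L^2}^2$ produced by these weighted estimates comes with a small enough coefficient (a factor $1/\nu$ or a factor of $t$ near $t=0$) to be absorbed into the left-hand side; the time weight $\si$ and the already-established smallness of $\si^2\|\na\dot u\|_{L^2}^2$ in \eqref{pd11} are what make this work.
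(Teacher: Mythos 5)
Your proposal follows essentially the same route as the paper: differentiate $(\ref{ns})_2$ in time, test with $u_t$, use the weighted estimates of Lemma \ref{l4}/Lemma \ref{esnr} (and \eqref{WPE1}) to compensate for the failure of Poincar\'e's inequality, then multiply by $t$ and integrate using \eqref{s411}, \eqref{s431} and the relation $u_t=\dot u-u\cdot\na u$. The only minor deviations are cosmetic: the paper bounds $\|P_t\|_{L^2}$ directly via the weighted density bound \eqref{s05} instead of your integration by parts, and it avoids Gr\"onwall by converting $\|\sqrt{\n}u_t\|_{L^2}^2$ into integrable quantities through \eqref{s67}, while the $t\|\na u\|_{H^1}^2$ part is already contained in \eqref{s431} as you note.
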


\begin{proof}
First, it follows from (\ref{WPE1}), (\ref{cp4}) and (\ref{s431}) that
for any $\eta\in(0,1]$ and any $s>2$,
\be\la{s61}\ba
\|\n^\eta u \|_{L^{s/\eta}}+ \|u\bar x^{-\eta}\|_{L^{s/\eta}}\le C(\eta,s).
\ea\ee 

Then, differentiating $(\ref{ns})_2$ with respect to $t$, we derive
\be\la{s620}\ba
& \n u_{tt}+\n u\cdot \na u_t-\mu\Delta u_t-( \mu+\lm)\na  \div u_t  \\ 
&= -\n_t(u_t+u\cdot\na u)-\n u_t\cdot\na u -\na P_t.
\ea\ee
Multiplying (\ref{s620}) by $u_t$ and integrating the resulting equation over $\rr$, 
then using $(\ref{ns})_1$ yields
\be\la{s62}\ba
& \frac{1}{2}\frac{d}{dt} \int \n |u_t|^2dx+\int \left(\mu|\na u_t|^2+( \mu+\lm)(\div u_t)^2  \right)dx \\
& =-2\int \n u \cdot \na  u_t\cdot u_tdx  -\int \n u \cdot\na (u\cdot\na u\cdot u_t)dx\\
& \quad-\int \n u_t \cdot\na u \cdot  u_tdx
+\int P_{t}\div u_{t} dx \\
& \quad \triangleq I_1+I_2+I_3+I_4.
\ea\ee
We conclude from (\ref{s61}), (\ref{s431}), (\ref{gn11}) and H\"older's inequality that
\be\la{s63}\ba
I_1+I_2 
& \le C \int  \n |u| \left( |\na u_t| |u_t| + |\na u|^2 |u_t| + |u| |u_t| |\na^2 u| + |u| |\na u| |\na u_t| \right) dx \\
& \le C \| \sqrt{\n} u\|_{L^{6}}\| \sqrt{\n} u_{t} \|_{L^{2}}^{1/2} \| \sqrt{\n} u_{t}\|_{L^{6}}^{1/2}\left(\| \na u_{t}\|_{L^{2}}+\| \na u\|_{L^{4}}^{2} \right) \\
&\quad + C\|\n^{1/4} u \|_{L^{12}}^{2}\| \sqrt{\n} u_{t}\|_{L^{2}}^{1/2} \| \sqrt{\n} u_{t}\|_{L^{6}}^{1/2} \| \na^{2} u \|_{L^{2}} 
+ C \| \sqrt{\n} u\|_{L^{8}}^{2}\|\na u\|_{L^{4}} \| \na u_{t}\|_{L^{2}} \\
& \le C  \| \sqrt{\n} u_{t}\|_{L^{2}}^{1/2}\left( \| \sqrt{\n} u_{t}\|_{L^{2}} +\| \na u_{t}\|_{L^{2}}\right)^{1/2}\left( \| \na u_{t}\|_{L^{2}} +  \| \na^{2} u \|_{L^{2}}+1\right) \\
& \quad + C \| \na u_{t}\|_{L^{2}} \left( 1 +  \| \na^{2} u \|_{L^{2}} \right) \\
&\le  \de\| \na u_{t}\|_{L^{2}}^{2}+C(\de)  \left(\| \na^{2} u \|_{L^{2}}^{2} +  \|\n^{1/2} u_{t}\|_{L^{2}}^{2}+1\right).
\ea\ee
where in the third inequality, we have used the following estimate
\be\la{s64}\ba
\| \sqrt{\n} u_t\|_{L^6} \le C \| \sqrt{\n} u_t\|_{L^2}+C \|\na u_t\|_{L^2},
\ea\ee
due to (\ref{esnr2}) and (\ref{cp45}).

In addition, by virtue of (\ref{s64}), (\ref{s431}) and Young's inequality, we have
\be\la{s65}\ba
I_3 + I_4 & \le C \int \n |u_t|^{2}|\na u | + |P_t| |\na u_t| dx \\
& \le C \| \na u\|_{L^{2}} \| \sqrt{\n} u_{t}\|_{L^{6}}^{3/2}
\| \sqrt{\n} u_{t}\|_{L^{2}}^{1/2} + \| P_t \|_{L^2} \| \na u_t \|_{L^2} \\
& \le \de \| \na u_{t}\|_{L^{2}}^{2}+C(\de) \left(\| \na^{2} u \|_{L^{2}}^{2} +
\| \sqrt{\n} u_{t}\|_{L^{2}}^{2}+1\right),
\ea\ee
where in the last inequality we have used the following fact:
\be\la{s66}\ba 
\|P_t\|_{L^2 } &\le C\|\bar x^{-a} u\|_{L^{2q/(q-2)}}\|\n\|_{L^\infty}^{\ga-1}\|\bar x^a \na \n\|_{  L^q}+C\|\na u\|_{L^2 }\le C,
\ea\ee
owing to $(\ref{ns})_1$, (\ref{s61}) and (\ref{s431}).

Moreover, it follows from (\ref{gn11}), (\ref{s61}) and H\"older's inequality that
\be\la{s67}\ba
\| \sqrt{\n} u_t \|^2_{L^2} 
& \le C \left( \| \sqrt{\n} \dot{u} \|^2_{L^2}
+ \| \sqrt{\n} u \cdot \na u \|^2_{L^2} \right) \\
& \le C \left( \| \sqrt{\n} \dot{u} \|^2_{L^2}
+ \| \sqrt{\n} u \|^2_{L^6} \| \na u \|^2_{L^3} \right) \\
& \le C \left( \| \sqrt{\n} \dot{u} \|^2_{L^2}
+ \| \na^2 u \|^2_{L^2} \right).
\ea\ee

Substituting (\ref{s63}) and (\ref{s65}) into (\ref{s62}), taking $\de$ suitably small, and applying (\ref{s67}), we derive
\be\la{s68}\ba
\frac{d}{dt} \int \n |u_t|^2dx+\mu\int   |\na u_t|^2 dx
\le C \left(\| \na^{2} u \|_{L^{2}}^{2} + \| \sqrt{\n} \dot{u} \|^2_{L^2} + 1 \right).
\ea\ee

Multiplying (\ref{s68}) by $t$, we derive
\be\la{s69}\ba
\frac{d}{dt} \left( t \int \n |u_t|^2 dx \right) + \mu t \int |\na u_t|^2 dx
\le C \left(\| \na^{2} u \|_{L^{2}}^{2} + \| \sqrt{\n} \dot{u} \|^2_{L^2} + 1 \right).
\ea\ee
Integrating (\ref{s69}) over $(0,T)$ implies (\ref{s06}) and finished the proof of Lemma \ref{s24}.
\end{proof}

From now on, we assume that the initial data $(\n_0,u_0)$ satisfy $(\ref{csol1})_1$ and the compatibility condition (\ref{csol2}).
\begin{lemma}\la{c21}
There exists a constant $C $ depending only on $\mu,\ \ga,\ T,\ N_0,\ E_0,\ q$, 
$\|\n_0\|_{L^\infty}$, $\| \na u_0 \|_{H^1}$,
$\| {\bar{x}}^a \rho_0 \|_{L^1}$, $\|\na(\bar x^a\n_0)\|_{L^2\cap L^q}$
and $\| g \|_{L^2}$, such that
\be\la{c01}\ba
\sup_{0\leq t\leq T} \left(\| \sqrt{\n} u_t\|^2_{L^2}+\|\na u\|^2_{H^1} \right)
+ \int_0^T \|\na u_t\|_{L^2}^2 dt \le C.
\ea\ee
\end{lemma}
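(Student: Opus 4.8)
The plan is to strip the temporal weight $t$ from the estimate $(\ref{s06})$ of Lemma \ref{s24}; this becomes possible precisely because the compatibility condition $(\ref{csol2})$ provides a bound for $\sqrt{\n}u_t$ at the initial time. The starting point is the differential inequality $(\ref{s68})$ derived in the proof of Lemma \ref{s24}, namely
\[
\frac{d}{dt}\int\n|u_t|^2dx+\mu\int|\na u_t|^2dx\le C\left(\|\na^2u\|_{L^2}^2+\|\sqrt{\n}\dot u\|_{L^2}^2+1\right).
\]
Both terms on the right-hand side are already known to be time-integrable on $(0,T)$: indeed $\int_0^T\|\na^2u\|_{L^2}^2\,dt\le C$ by $(\ref{s431})$ and $\int_0^T\|\sqrt{\n}\dot u\|_{L^2}^2\,dt\le C$ by $(\ref{s418})$. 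Hence, once the initial quantity $\int\n_0|u_t(\cdot,0)|^2dx$ is controlled, a direct integration in time gives $\sup_{0\le t\le T}\int\n|u_t|^2dx+\int_0^T\|\na u_t\|_{L^2}^2\,dt\le C$.

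To control the initial term I would evaluate $(\ref{ns})_2$ at $t=0$: this yields $\n_0\dot u(\cdot,0)=\mu\Delta u_0+(\mu+\lm)\na\div u_0-\na P(\n_0)=-\sqrt{\n_0}g_3$ by $(\ref{csol2})$, so that $\int\n_0|\dot u(\cdot,0)|^2dx=\int_{\{\n_0>0\}}|g_3|^2dx\le\|g_3\|_{L^2}^2$. Writing $\sqrt{\n_0}u_t(\cdot,0)=\sqrt{\n_0}\dot u(\cdot,0)-\sqrt{\n_0}\,u_0\cdot\na u_0$, it remains to bound the convective contribution, which I would do via $\|\sqrt{\n_0}\,u_0\cdot\na u_0\|_{L^2}\le\|\sqrt{\n_0}u_0\|_{L^6}\|\na u_0\|_{L^3}$; here Lemma \ref{esnr} (applicable since ${\bar{x}}^a\n_0\in L^1$ and $(\ref{rho00})$ holds) gives $\|\sqrt{\n_0}u_0\|_{L^6}\le C(\|\sqrt{\n_0}u_0\|_{L^2}+\|\na u_0\|_{L^2})\le C$, while $\|\na u_0\|_{L^3}\le C\|\na u_0\|_{L^2}^{2/3}\|\na^2u_0\|_{L^2}^{1/3}\le C$ thanks to $\na^2u_0\in L^2$ from $(\ref{csol1})_1$. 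This gives $\int\n_0|u_t(\cdot,0)|^2dx\le C$, and hence the bounds on $\sqrt{\n}u_t$ and $\na u_t$ claimed in $(\ref{c01})$.

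Finally, to obtain $\sup_{0\le t\le T}\|\na u\|_{H^1}\le C$, I would invoke the elliptic estimate $\|\na^2u\|_{L^2}\le C\|\n\dot u\|_{L^2}+C\|\na\n\|_{L^2}$ (the $p=2$ case of $(\ref{s434})$), combined with $\|\n\dot u\|_{L^2}\le\|\n\|_{L^\infty}^{1/2}\|\sqrt{\n}\dot u\|_{L^2}$ and the splitting $\|\sqrt{\n}\dot u\|_{L^2}\le\|\sqrt{\n}u_t\|_{L^2}+\|\sqrt{\n}\,u\cdot\na u\|_{L^2}$; the convective term is estimated by $\|\sqrt{\n}u\|_{L^6}\|\na u\|_{L^3}\le C\|\na^2u\|_{L^2}^{1/3}$ using $(\ref{s61})$ and the interpolation $\|\na u\|_{L^3}\le C\|\na u\|_{L^2}^{2/3}\|\na^2u\|_{L^2}^{1/3}$ together with $(\ref{s431})$. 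Since $\sup_t\|\sqrt{\n}u_t\|_{L^2}\le C$ is now available and $\|\na\n\|_{L^2}\le C$ by $(\ref{s431})$, this reduces to $\|\na^2u\|_{L^2}\le C+C\|\na^2u\|_{L^2}^{1/3}$, and Young's inequality absorbs the last term to produce $\sup_t\|\na^2u\|_{L^2}\le C$; with $\sup_t\|\na u\|_{L^2}\le C$ from $(\ref{s418})$ this is exactly $(\ref{c01})$. I expect the only delicate points to be the handling of the initial term---where one must work through the product $\n_0\dot u(\cdot,0)$, which may vanish with $\n_0$, and check that $\sqrt{\n_0}\,u_0\cdot\na u_0\in L^2$ via the weighted estimate of Lemma \ref{esnr}---and the self-improving structure of the final elliptic bound; everything else is routine.
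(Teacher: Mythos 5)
Your argument is correct, and it reaches \eqref{c01} by a route that is close in spirit to, but not identical with, the paper's. The paper uses the compatibility condition at the level of the material derivative: it sets $\sqrt{\n}\dot u(\cdot,0)=g_3$, integrates the unweighted $\dot u$--energy inequality \eqref{s417} over $(0,T)$ to get $\sup_t\|\sqrt{\n}\dot u\|_{L^2}^2+\int_0^T\|\na\dot u\|_{L^2}^2\,dt\le C$ (this is \eqref{c411}), deduces $\int_0^T\big(\|\na^2u\|_{L^2}^2+\|\na^2u\|_{L^q}^2\big)dt\le C$ from the elliptic estimate \eqref{s434} and \eqref{s422}, and only then integrates \eqref{s68}; the pointwise bound $\sup_t\|\na^2u\|_{L^2}\le C$ then falls out directly from \eqref{s434} with $p=2$, since $\sup_t\|\n\dot u\|_{L^2}$ is already controlled. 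You instead skip the $\dot u$--level estimate entirely, observing that the right-hand side of \eqref{s68} is already time-integrable by \eqref{s418} and \eqref{s431}, and you use the compatibility condition only to bound the initial value of $\int\n|u_t|^2\,dx$ via the decomposition $u_t(\cdot,0)=\dot u(\cdot,0)-u_0\cdot\na u_0$; you then recover $\sup_t\|\na^2u\|_{L^2}$ through the self-improving elliptic bound $\|\na^2u\|_{L^2}\le C+C\|\na^2u\|_{L^2}^{1/3}$ and Young's inequality. Both closures are sound. What the paper's order of operations buys is that it also produces $\sup_t\|\sqrt{\n}\dot u\|_{L^2}$, $\int_0^T\|\na\dot u\|_{L^2}^2dt$ and the $L^2(0,T;L^q)$ bound on $\na^2u$ (quantities reused later, e.g.\ in the higher-order estimates and the incompressible-limit argument), and it avoids the absorption step; what your version buys is brevity and an explicit treatment of the initial datum for $\sqrt{\n}u_t$, a point the paper passes over silently when it integrates \eqref{s68} from $t=0$. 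Two small touch-ups: Lemma \ref{esnr} bounds $\big(\int\n_0|u_0|^6dx\big)^{1/6}$, so to get $\|\sqrt{\n_0}u_0\|_{L^6}$ you should insert the factor $\|\n_0\|_{L^\infty}^{1/3}$ (harmless, since the constant is allowed to depend on $\|\n_0\|_{L^\infty}$); and the identification of $\int\n|u_t|^2dx$ at $t=0$ with the quantity computed from \eqref{csol2} should, strictly speaking, be performed at the level of the approximate solutions with continuity in time, exactly as the paper's convention $\sqrt{\n}\dot u(\cdot,0)=g_3$ does --- you flag this yourself, and it is at the same level of rigor as the paper.
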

\begin{proof}
First, taking into account on the compatibility condition (\ref{csol2}), we define
\be\la{c413}\ba
\sqrt{\n} \dot{u}(x,t=0)=g(x).
\ea\ee
Integrating (\ref{s417}) over $(0,T)$
and using (\ref{s418}) and (\ref{s419}), we derive
\be\la{c411}\ba
\sup_{0\le t\le T}
\int\n|\dot u|^2dx
+\int_0^{ T}\int|\na\dot u|^2dxdt\le C,
\ea\ee
which together with (\ref{s431}), (\ref{s434}) and (\ref{s422}) implies
\be\la{c0411}\ba
\int_0^{T} \| \na^2 u \|^2_{L^2} + \| \na^2 u \|^2_{L^q} dxdt \le C.
\ea\ee
Then, integrating (\ref{s68}) over $(0,T)$, with the help of (\ref{c0411}),
we obtain (\ref{c01}) and finish the proof of Lemma \ref{c21}.
\end{proof}

In order to extend the local classical solution, we need the following higher-order estimates.
These estimates can be shown using similar arguments as in \cite{LLL}, so the proofs are omitted.
\begin{lemma}\la{c26}
There exists a positive constant C depending only on $T,\ N_0,\ \mu,\ \nu,\ \ga,\ E_0$,
$\|\n_0\|_{L^\infty}$, $\| \na u_0 \|_{H^1}$,
$\| {\bar{x}}^a \rho_0 \|_{L^1}$,
$\|\na ( \bar x^a \rho_0 ) \|_{L^2 \cap L^{q}}$, $\|\bar x^{\de_0} \na^2 \n \|_{L^2}$,
$ \|\bar x^{\de_0} \na^2 P \|_{L^2}$ and $\|g\|_{L^2}$, such that
\be\la{c421}\ba
\sup_{0\le t\le T} \left( \| \bar x^{\de_0} \na^2 \n \|_{L^2}
+ \| \bar x^{\de_0} \na^2 P \|_{L^2} \right) 
\le C,
\ea\ee

\be\la{c442}\ba
\sup\limits_{0\le t\le T} t \| \na u_t\|^2_{L^2}
+ \int_0^T t\left(\|\n^{1/2}u_{tt}\|^2_{L^2}+\| \na^2 u_t\|^2_{L^2}\right)dt
\le C,
\ea\ee

\be\la{c451} \ba
\sup_{0\leq t\leq T}\left(\|\nabla^2 \n\|_{L^q } +\|\nabla^2 P  \|_{L^q }\right) \leq C,
\ea\ee

\be\la{c461}\ba
& \sup_{0\leq t\leq T} t \left(\| \sqrt{\n} u_{tt}\|_{L^2}+  \|\na^3 u \|_{L^2 \cap L^q}
+ \| \na u_t \|_{H^1} + \| \na^2 (\n u) \|_{L^{(q+2)/2}} \right) \\
& \quad + \int_{0}^T t^2 \left( \|\nabla u_{tt}\|_{L^2}^2 + \| u_{tt} \bar x^{-1} \|_{L^2}^2 \right) dt
\leq C.
\ea\ee
\end{lemma}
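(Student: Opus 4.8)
The plan is to follow the bootstrap scheme of \cite{LLL} (see also \cite{HLX2}), carrying the weights $\bar x^{\delta_0}$ and $\bar x^a$ throughout, since $\OM=\rr$ and $\n$ vanishes at infinity so that Poincar\'e's inequality is unavailable and must be replaced by Lemmas \ref{WPE} and \ref{esnr}. The estimates already obtained in Lemmas \ref{s21}--\ref{c21} serve as the baseline: $\na u\in L^\infty(0,T;H^1)\cap L^1(0,T;L^\infty)$, $\na^2 u\in L^2(0,T;L^2\cap L^q)$, $\na\n\in L^\infty(0,T;L^2\cap L^q)$, $\bar x^a\n\in L^\infty(0,T;H^1\cap W^{1,q})$, $\sqrt{\n}u_t\in L^\infty(0,T;L^2)$, $\na u_t\in L^2(\rr\times(0,T))$, $\sqrt{\n}\dot u\in L^\infty(0,T;L^2)$, $\na\dot u\in L^2(\rr\times(0,T))$, together with the weighted bounds $\|u\bar x^{-\eta}\|_{L^{s/\eta}}\le C$ (from \eqref{s61}). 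The new quantities are produced in the order: first the weighted second-order density bounds \eqref{c421} and their $L^q$ analogue \eqref{c451}; then the time-weighted energy estimate for $u_{tt}$, which yields \eqref{c442}; and finally the elliptic upgrades giving $\na^3 u$, $\na u_t\in H^1$ and $\na^2(\n u)$ in \eqref{c461}.

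For \eqref{c421} and \eqref{c451} I would differentiate $(\ref{ns})_1$ twice in space and multiply by $\bar x^{\delta_0}$, obtaining the transport equation satisfied by $v:=\bar x^{\delta_0}\na^2\n$ (and the identical one for $\bar x^{\delta_0}\na^2 P$), whose right-hand side is a sum of $\bar x^{\delta_0}\n\na^3 u$, $\bar x^{\delta_0}\na\n\otimes\na^2 u$, lower-order products, and commutator terms produced by the weight, involving $|u||\na\log\bar x|$ and $|u||\na^2\log\bar x|$, which are controlled by $\|u\bar x^{-\eta}\|_{L^{s/\eta}}\le C$. Testing in $L^2$ against $v$ and in $L^q$ against $|v|^{q-2}v$ and using $\na u\in L^1(0,T;L^\infty)$ reduces everything, via Gr\"onwall, to bounding $\int_0^T\|\na^3 u\|_{L^2\cap L^q}\,dt$ (with a time weight allowed in the $L^q$ slot). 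By Lemmas \ref{estg} and \ref{dc} applied to the elliptic systems \eqref{p1}, $\|\na^3 u\|_{L^p}\le C\left(\|\na(\n\dot u)\|_{L^p}+\|\na^2 P\|_{L^p}\right)$, and $\|\na(\n\dot u)\|_{L^p}$ is handled by H\"older's inequality, \eqref{gn11} and the weighted inequalities once $\na\dot u$ (resp.\ $\na u_t$) is under control; hence the density bounds and the velocity bounds must be closed together in a single Gr\"onwall argument.

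For \eqref{c442} and the $u_{tt}$-part of \eqref{c461} I would differentiate $(\ref{ns})_2$ once in time to obtain the equation for $u_{tt}$ (as in \eqref{s62}), test it against $u_{tt}$, and estimate the right-hand side — the terms $\int\n u\cdot\na u_t\cdot u_{tt}$, $\int\n u_t\cdot\na u\cdot u_{tt}$, $\int\na P_t\cdot u_{tt}$ and those carrying $\n_t$ — by H\"older's inequality, \eqref{gn11}, \eqref{esnr2}, \eqref{WPE1} and $\|P_t\|_{L^2}\le C$, $\|P_{tt}\|_{L^2}\le C$ (the latter from $(\ref{ns})_1$, \eqref{s61} and the weighted density estimates just proved), arriving at $\frac{d}{dt}\left(\|\na u_t\|_{L^2}^2+(\mu+\lm)\|\div u_t\|_{L^2}^2\right)+\|\sqrt{\n}u_{tt}\|_{L^2}^2\le C\left(\|\na u_t\|_{L^2}^2+\|\na^2 u\|_{L^2}^2+1\right)$; multiplying by $t$, integrating, using \eqref{c01} and the elliptic bound $\|\na^2 u_t\|_{L^2}\le C\left(\|\sqrt{\n}u_{tt}\|_{L^2}+\cdots\right)$ for the $u_t$-system gives \eqref{c442}. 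Differentiating once more in time, testing the $u_{ttt}$-equation against $u_{tt}$, multiplying by $t^2$ and absorbing the term $2t\|\sqrt{\n}u_{tt}\|_{L^2}^2$ by \eqref{c442} yields $\sup_t t\|\sqrt{\n}u_{tt}\|_{L^2}^2+\int_0^T t^2\|\na u_{tt}\|_{L^2}^2\,dt\le C$, and $\int_0^T t^2\|u_{tt}\bar x^{-1}\|_{L^2}^2\,dt\le C$ then follows from \eqref{WPE1}. The remaining bounds in \eqref{c461} are elliptic consequences: $t\|\na u_t\|_{H^1}^2\le C$ by applying Lemmas \ref{estg}, \ref{dc} to the $u_t$-system with right-hand side $\n u_{tt}+\n u\cdot\na u_t+\n_t(u_t+u\cdot\na u)+\n u_t\cdot\na u+\na P_t$; $t\|\na^3 u\|_{L^2\cap L^q}^2\le C$ from the elliptic estimates for $G,\o$ together with the time-weighted bound on $\na u_t$ from \eqref{c442} and $\na\n\in L^q$; and $t\|\na^2(\n u)\|_{L^{(q+2)/2}}^2\le C$ from Leibniz's rule using \eqref{c421}, \eqref{c451} and $\na^2 u\in L^q$. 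The main obstacle is precisely this circular coupling — \eqref{c421}/\eqref{c451} need $\na^3 u$, which via elliptic regularity needs the $u_{tt}$-level estimates, which in turn need the weighted density bounds to control $P_t,P_{tt}$ and the weight commutators in the unbounded domain — combined with the absence of Poincar\'e's inequality. As in \cite{LLL} this is resolved by running one Gr\"onwall inequality on the sum of all of these quantities, observing that every dangerous term carries either a factor integrable in $t$ because of $\int_0^T\|\na\dot u\|_{L^2}^2\,dt\le C$ (from \eqref{c411}) or a sub-critical power that is absorbed, and by systematically pairing $\|\sqrt{\n}\,\cdot\,\|_{L^2}$ with $\|\na\,\cdot\,\|_{L^2}$ (Lemma \ref{esnr}) and using the $\bar x$-weighted inequality (Lemma \ref{WPE}) in place of Poincar\'e's inequality.
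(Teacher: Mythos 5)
The paper itself omits the proof of Lemma \ref{c26}, stating only that the estimates follow by arguments similar to \cite{LLL} (cf. \cite{HLX2}), and your outline reproduces exactly that standard weighted bootstrap --- transport estimates for $\bar x^{\delta_0}\nabla^2\rho$, $\bar x^{\delta_0}\nabla^2 P$ closed via the elliptic bound $\|\nabla^3 u\|_{L^p}\le C\left(\|\nabla(\rho\dot u)\|_{L^p}+\|\nabla^2 P\|_{L^p}\right)$, time-differentiated energy estimates tested against $u_{tt}$ with weights $t$ and $t^2$, and Lemmas \ref{WPE}, \ref{esnr} in place of Poincar\'e's inequality --- so it is essentially the same approach. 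The only slips are cosmetic and harmless: $\|P_{tt}\|_{L^2}$ is controlled only in $L^2(0,T)$ (not uniformly in time), since it involves $\|\nabla u_t\|_{L^2}$, and the $t^2$-weighted argument you describe yields $\sup_t t^2\|\sqrt{\rho}\,u_{tt}\|_{L^2}^2\le C$ rather than the written $\sup_t t\|\sqrt{\rho}\,u_{tt}\|_{L^2}^2\le C$, which is precisely what \eqref{c461} requires.
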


\subsection{Far-field density is away from vacuum}

In this subsection, we assume that $\nu \ge \nu_2$ where $\nu_2$ is determined in Lemma \ref{cpl5},
and let $(\n,u)$ be a classical solution of (\ref{ns})--(\ref{i3}) on $\rr \times (0,T]$ satisfying (\ref{1cpp2}).

First, we assume that the initial data $(\n_0,u_0)$ satisfying $(\ref{ssol1})_2$.
\begin{lemma}\la{lsn21}
There is a positive constant $C$ depending only on  
$T,\ \ga,\ \mu,\ \nu,\ E_0$, $\| \rho_0 \|_{L^\infty}$, $\tilde{\n}$ and
$\| \na u_0 \|_{L^2}$, such that
\be\la{sn411} \ba
\sup_{0\le t\le T} \si \int\n|\dot u|^2dx+\int_0^{ T} \si \|\na\dot u\|^2_{L^2}dt\le C.
\ea\ee	
\end{lemma}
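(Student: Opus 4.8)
The plan is to repeat the proof of Lemma \ref{s21} (the case $\tilde{\n}=0$) step by step, with the vacuum-specific ingredients — the conserved-mass identity and the weighted inequalities of Lemmas \ref{l4} and \ref{esnr} — replaced by the tools available when $\tilde{\n}>0$: the $L^2$-bound \eqref{cp02} on the pressure perturbation together with the estimates \eqref{cp033}, \eqref{pd011} from Lemmas \ref{l3} and \ref{g1}. (The $\tilde{\n}>0$ analogue of Lemmas \ref{l4}, \ref{esnr} is Lemma \ref{LZ1}, which will enter the higher-order estimates of this case but is not needed at the present, energy-level, stage.)

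First I would record the elementary bounds. Since $(\n,u)$ satisfies \eqref{cp051}, i.e.\ $0\le\n\le 2\|\n_0\|_{L^\infty}$, the definition \eqref{gw}, the energy estimate \eqref{cp1}, the bound \eqref{cp033} (whose right side is bounded by $C(\|\na u_0\|_{L^2}^2+E_0)$, since $A_1^2(0)$ is controlled by $\|\na u_0\|_{L^2}^2$, $E_0$ and the other listed quantities), and \eqref{cp39} together yield $\sup_{0\le t\le T}(\|\n\|_{L^\infty}+\|\na u\|_{L^2})+\int_0^T(\|\na u\|_{L^2}^2+\|\sqrt{\n}\du\|_{L^2}^2)\,dt\le C$. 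Moreover, from $\n\le 2\|\n_0\|_{L^\infty}$ one has $\|P-P(\tilde{\n})\|_{L^\infty}\le C$, hence $\|P-P(\tilde{\n})\|_{L^4}^4\le\|P-P(\tilde{\n})\|_{L^\infty}^2\|P-P(\tilde{\n})\|_{L^2}^2\le C$ by \eqref{cp02}. This $L^2\cap L^\infty$ bound on $P-P(\tilde{\n})$ is precisely the substitute for the finite-mass argument used when $\tilde{\n}=0$, and it makes every pressure term harmless.

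Next I would reuse the differential inequality \eqref{pd118}, which was derived in Lemma \ref{g1} with no sign hypothesis on $\tilde{\n}$ and therefore applies here. Inserting the bounds just obtained, together with $\|G\|_{L^4}^4\le C\|G\|_{L^2}^2\|\na G\|_{L^2}^2\le C\|\sqrt{\n}\du\|_{L^2}^2$ (via \eqref{gn11}, \eqref{p2}, \eqref{cp1}), reduces \eqref{pd118} to $\big(\int\n|\du|^2\,dx+\tfrac{2(\nu-\mu)}{\nu}\int G\,\p_i u\cdot\na u^i\,dx\big)_t+\tfrac{3\mu}{4}\|\na\du\|_{L^2}^2\le C+C\|\sqrt{\n}\du\|_{L^2}^2+C\|\na u\|_{L^4}^4$, the analogue of \eqref{s417}. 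A side computation using \eqref{dc1}, \eqref{p2} and \eqref{gn11} gives $\|\na u\|_{L^4}^4\le C\nu^{-4}(\|G\|_{L^4}^4+\|P-P(\tilde{\n})\|_{L^4}^4)+C\|\o\|_{L^2}^2\|\na\o\|_{L^2}^2\le C+C\|\sqrt{\n}\du\|_{L^2}^2$, so that $\int_0^T\|\na u\|_{L^4}^4\,dt\le C$ by \eqref{cp033} — the analogue of \eqref{s419}.

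Finally, I would multiply the reduced inequality by $\si=\min\{1,t\}$, control the term $\tfrac{2(\nu-\mu)}{\nu}\int G\,\p_i u\cdot\na u^i\,dx$ from above and below by \eqref{pd120} (which bounds its absolute value by $\tfrac14\|\sqrt{\n}\du\|_{L^2}^2+C$ once $\|G\|_{L^2}$, $\|\na u\|_{L^2}$ and $\|P-P(\tilde{\n})\|_{L^3}$ are seen to be bounded), and integrate over $(0,t)$ for $t\in(0,T]$. The boundary term at $t=0$ vanishes because $\si(0)=0$; this is the point that keeps the constant independent of $g_3$ and dispenses with any compatibility condition at this level. Since $\si'$ is supported in $(0,1)$ and the remaining right-hand terms are $t$-integrable by \eqref{cp033} and the previous step, a direct integration gives \eqref{sn411}. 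I do not expect a genuine obstacle: every auxiliary fact used for $\tilde{\n}=0$ has a $\tilde{\n}>0$ counterpart already available, and the only new feature — the non-decay of $P-P(\tilde{\n})$ at spatial infinity — is neutralised by its $L^2\cap L^\infty$ membership.
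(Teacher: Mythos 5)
Your argument is correct and follows essentially the same route as the paper's proof: the paper likewise takes the differential inequality \eqref{pd118} from Lemma \ref{g1}, uses \eqref{cp1}, \eqref{cp02}, \eqref{cp033} to reduce it to the analogue of \eqref{s417}, proves $\int_0^T\|\na u\|_{L^4}^4\,dt\le C$ via \eqref{dc1}, \eqref{p2}, \eqref{gn11}, and then multiplies by $\si$ and integrates, controlling the cross term $\frac{2(\nu-\mu)}{\nu}\int G\,\p_i u\cdot\na u^i\,dx$ by \eqref{pd120}. No gaps.
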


\begin{proof}
First, it follows from (\ref{cp1}) and (\ref{1cpp2}) that
\be\la{sn418}\ba
\sup_{0\le t\le T} \| \na u\|_{L^2} + \int_0^T \left( \| \na u\|^2_{L^2}+ \| \n^{1/2} \dot{u} \|^2_{L^2} \right)dt \le C.
\ea\ee
Then, analogue to the proof of Lemma \ref{s21}, we have
\be\la{sn417}\ba
&\left(\int\rho|\dot{u}|^2dx + \frac{2(\nu-\mu)}{\nu} \int G \p_i u \cdot \na u^i dx \right)_t
+ \frac{3\mu}{4} \| \na \dot{u} \|_{L^2}^2 \\
& \le C \| \na u \|^2_{L^2} + C \| \sqrt{\n} \dot{u} \|^2_{L^2}
+ \frac{C}{\nu^2} \| P-P(\tilde{\n}) \|^4_{L^4} + C \| \na u \|^4_{L^4}.
\ea\ee
In addition, by virtue of (\ref{p2}), (\ref{sn418}), (\ref{dc1}) and (\ref{gn11}), it holds that
\be\la{sn419}\ba
\int_0^{T}\|\na u\|^4_{L^4}dt & \le C\int_0^{ T}\|\div u\|^4_{L^4} +\|\o \|^4_{L^4}dt\\
&\le C\int_0^{T} \|G\|^2_{L^2}\|\nabla G\|^2_{L^2}
+ \| P-P(\tilde{\rho}) \|^4_{L^4} + \|\o \|^2_{L^2}\|\nabla \o \|^2_{L^2}dt \\
&\le C + C\int_0^{T} \|\nabla G\|^2_{L^2} + \|\nabla \o \|^2_{L^2}dt \\
&\le C.
\ea\ee
Multiplying (\ref{sn417}) by $\si$ and integrating the resulting equation over $(0,T)$,
with the help of (\ref{pd120}), (\ref{sn418}) and (\ref{sn419}), we derive (\ref{sn411}).
\end{proof}

\begin{lemma}\la{lsn22}
For any $2 < p < \infty$, there exists a positive constant $C$ depending only on 
$T,\ p,\ \ga,\ \mu,\ \nu,\ E_0,\ \| \rho_0 \|_{L^\infty}$, $\| \na \n_0 \|_{L^2 \cap L^q}$, $\tilde{\n}$ and
$\| \na u_0 \|_{L^2}$, such that
\be\la{sn431}\ba
&\sup_{0\le t\le T} \left( \| \n - \tilde{\rho} \|_{H^1 \cap W^{1,q}} + \| u \|_{H^1} 
+ t \| \na^2 u \|^2_{L^2} + t \| \sqrt{\n} u_t \|^2_{L^2} + \| \n_t \|_{L^2} \right) \\
& + \int_0^T \left( \| u \|^{2}_{H^2}+\|\nabla^2 u\|^{(q+1)/q}_{L^q}+t \|\nabla^2 u\|_{L^q}^2 
+ \| \sqrt{\n} u_t \|^2_{L^2} + t \| u_t \|^2_{H^1} \right) dt
\le C.
\ea\ee
\end{lemma}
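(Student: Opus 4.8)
The plan is to follow the Case~I argument of Lemmas~\ref{s22}--\ref{s24} almost verbatim, the only structural difference being that the weighted estimates forced by the failure of Poincar\'e's inequality when $\tilde\n=0$ are replaced here by the simpler interpolation of Lemma~\ref{LZ1}, which is available because $\n-\tilde\n\in L^\infty(0,T;L^2)$ by (\ref{wsol01}) and (\ref{wsol2}). Throughout one uses freely $0\le\n\le 2\|\n_0\|_{L^\infty}$, (\ref{cp1}), (\ref{cp02}), (\ref{cp033}) and (\ref{sn411}).

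First I would record the baseline bounds: from (\ref{sn418}) one has $\sup_{[0,T]}\|\na u\|_{L^2}\le C$, and then Lemma~\ref{LZ1} with $v=u$ together with the Gagliardo--Nirenberg inequality (\ref{gn11}) gives $\sup_{[0,T]}\|u\|_{H^1}\le C$ and $\sup_{[0,T]}\|u\|_{L^s}\le C(s)$ for every $s<\infty$. Next, exactly as in (\ref{s434}), the elliptic estimate (\ref{p2}), the div--curl inequality (\ref{dc1}) and the bound $\|\na P\|_{L^p}=\ga\|\n^{\ga-1}\na\n\|_{L^p}\le C\|\na\n\|_{L^p}$ give $\|\na^2u\|_{L^p}\le C\|\n\du\|_{L^p}+C\|\na\n\|_{L^p}$ for $p\in[2,q]$. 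To control $\|\n\du\|_{L^q}$, Lemma~\ref{LZ1} with $v=\du$ yields $\|\du\|_{L^2}\le C(\|\sqrt\n\du\|_{L^2}+\|\na\du\|_{L^2})$, hence by (\ref{gn11}) $\|\n\du\|_{L^q}\le C\|\du\|_{L^q}\le C\|\sqrt\n\du\|_{L^2}^{2/q}\|\na\du\|_{L^2}^{1-2/q}+C\|\na\du\|_{L^2}$; combining this with (\ref{cp033}), (\ref{sn411}) and Young's inequality, splitting the time integral at $t=1$ where $\si=t$, gives the analogue of (\ref{s423}), namely $\int_0^T(\|\n\du\|_{L^q}^{(q+1)/q}+t\|\n\du\|_{L^q}^2)\,dt\le C$.

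With these pieces the density estimate closes as in Lemma~\ref{s22}: the Beale--Kato--Majda inequality (\ref{bkm1}) combined with (\ref{gn11}) and (\ref{p2}) gives $\|\na u\|_{L^\infty}\le C(1+\|\n\du\|_{L^q})\log(e+\|\na\n\|_{L^q})$, while differentiating $(\ref{ns})_1$ and integrating over $\rr$ yields, for $p=q$, $\frac{d}{dt}\log(e+\|\na\n\|_{L^q})\le C(1+\|\n\du\|_{L^q})\log(e+\|\na\n\|_{L^q})$; Gr\"onwall's inequality together with the integrability just obtained gives $\sup_{[0,T]}\|\na\n\|_{L^q}\le C$, and taking $p=2$ gives $\sup_{[0,T]}\|\na\n\|_{L^2}\le C$. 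Substituting back into the bound for $\|\na^2u\|_{L^p}$ and using (\ref{cp033}), (\ref{sn411}) and the $t$-weighted bound on $\|\n\du\|_{L^q}$ produces $\int_0^T\|\na^2u\|_{L^2}^2\,dt\le C$, $\sup_{[0,T]}t\|\na^2u\|_{L^2}^2\le C$ and $\int_0^T(\|\na^2u\|_{L^q}^{(q+1)/q}+t\|\na^2u\|_{L^q}^2)\,dt\le C$; moreover $\sup_{[0,T]}\|\n_t\|_{L^2}\le C$ follows from $\n_t=-\n\div u-u\cdot\na\n$, H\"older's inequality and the $L^s$-bound on $u$.

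Finally, for the terms with $u_t$ I would proceed as in the proof of Lemma~\ref{s24}: differentiating $(\ref{ns})_2$ in time, testing with $u_t$ and integrating gives $\frac{d}{dt}\|\sqrt\n u_t\|_{L^2}^2+\mu\|\na u_t\|_{L^2}^2\le C(\|\na^2u\|_{L^2}^2+\|\sqrt\n\du\|_{L^2}^2+1)$, the convective terms now being handled simply via $\|u\|_{L^s}\le C$, via $\|\sqrt\n u_t\|_{L^6}\le C(\|\sqrt\n u_t\|_{L^2}+\|\na u_t\|_{L^2})$ (from Lemma~\ref{LZ1} and (\ref{gn11})), and via $\|P_t\|_{L^2}\le C$ (since $P_t=-\ga P\div u-u\cdot\na P$). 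Since $\|\sqrt\n u_t\|_{L^2}^2\le C(\|\sqrt\n\du\|_{L^2}^2+\|\na^2u\|_{L^2}^2+1)$, integrating in $t$ gives $\int_0^T\|\sqrt\n u_t\|_{L^2}^2\,dt\le C$; multiplying the differential inequality by $t$ and integrating then yields $\sup_{[0,T]}t\|\sqrt\n u_t\|_{L^2}^2+\int_0^Tt\|\na u_t\|_{L^2}^2\,dt\le C$, and a last application of Lemma~\ref{LZ1} with $v=u_t$ turns this into the stated bound on $\int_0^Tt\|u_t\|_{H^1}^2\,dt$. Collecting all the estimates gives (\ref{sn431}). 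The main obstacle is the step carried out in the preceding paragraph: extracting the correct $t$-weighted integrability of $\|\n\du\|_{L^q}$ from the time-degenerate estimates (\ref{cp033})--(\ref{sn411}) and closing the logarithmic Gr\"onwall inequality for $\|\na\n\|_{L^q}$; everything else is a routine adaptation of Case~I once Poincar\'e's inequality has been bypassed by Lemma~\ref{LZ1}.
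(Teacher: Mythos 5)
Your proposal is correct, and for the bulk of the lemma it coincides with the paper's proof: the paper disposes of the bound \eqref{sn432} for $\|\n-\tilde\n\|_{H^1\cap W^{1,q}}$, $\|u\|_{H^1}$, $t\|\na^2u\|_{L^2}^2$ and the time integrals of $\|u\|_{H^2}^2$, $\|\na^2u\|_{L^q}^{(q+1)/q}$, $t\|\na^2u\|_{L^q}^2$ with exactly the phrase ``analogous to Lemma \ref{s22}, employing \eqref{LZ02} and \eqref{cp02}'', which is precisely your program of replacing the weighted estimates \eqref{esnr2}, \eqref{cp4} by Lemma \ref{LZ1} (note that \eqref{cp02} plus the upper bound of $\n$ gives $\sup_t\|\n-\tilde\n\|_{L^2}\le C$, so the constant in \eqref{LZ02} is uniform in time, as your argument needs), and the $\|\n_t\|_{L^2}$ bound is the same computation as \eqref{sn4311}. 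Where you genuinely diverge is the $u_t$-part: you re-run the Case I machinery of Lemma \ref{s24}, differentiating $(\ref{ns})_2$ in time, testing with $u_t$, and closing a $t$-weighted Gr\"onwall argument, whereas the paper never touches the time-differentiated equation here. It simply writes $u_t=\dot u-u\cdot\na u$ and estimates, as in \eqref{sn4312}--\eqref{sn4313}, $\|\sqrt\n u_t\|_{L^2}^2\le\|\sqrt\n\dot u\|_{L^2}^2+C\|\na^2u\|_{L^2}^2$ and $\|\na u_t\|_{L^2}^2\le\|\na\dot u\|_{L^2}^2+C\|\na^2u\|_{L^q}^2+\|\na u\|_{L^4}^4$, after which all five $u_t$-terms in \eqref{sn431} follow at once from \eqref{cp033}, \eqref{sn411}, \eqref{sn419}, \eqref{sn432} and \eqref{LZ02}, with no new energy estimate. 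Your route is also sound — in the $\tilde\n>0$ setting the convective terms are controllable by $\|u\|_{L^s}\le C$ and $\|\sqrt\n u_t\|_{L^6}\le C(\|\sqrt\n u_t\|_{L^2}+\|\na u_t\|_{L^2})$ via Lemma \ref{LZ1}, the $t$-weight removes any need for a compatibility condition, and you correctly get the unweighted $\int_0^T\|\sqrt\n u_t\|_{L^2}^2\,dt$ from the decomposition rather than from the differential inequality — but it is strictly more work than needed, since the material-derivative bounds \eqref{cp033} and \eqref{sn411} already encode all the time-weighted information the lemma requires; its only payoff is that it would also give a bound on $\sup_t t\|\na u_t\|_{L^2}^2$, which the statement does not ask for.
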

\begin{proof}
First, following an argument analogous to the proof of Lemma \ref{s22} and employing estimates (\ref{LZ02}) and (\ref{1cp27}), we obtain
\be\la{sn432}\ba
&\sup_{0\le t\le T} \left( \| \n - \tilde{\rho} \|_{H^1 \cap W^{1,q}} + \| u \|_{H^1} 
+ t \| \na^2 u \|^2_{L^2} \right) \\
& + \int_0^T \left( \| u \|^{2}_{H^2}+\|\nabla^2 u\|^{(q+1)/q}_{L^q}+t \|\nabla^2 u\|_{L^q}^2 \right) dt
\le C.
\ea\ee
Then, it follows from $(\ref{ns})_1$, (\ref{LZ02}), (\ref{sn432}) and H\"older's inequality, we obtain
\be\la{sn4311}\ba
\| \n_t\|_{L^2}\le
C\|u\|_{L^{2q/(q-2)}}\|\nabla \n \|_{L^q}+C\|\n\|_{L^\infty} \|\nabla u\|_{L^2} \le C.
\ea\ee

In addition, we deduce from (\ref{sn432}), (\ref{gn11}) and H\"older's inequality that
\be\la{sn4312}\ba 
\int\rho|u_t|^2dx 
&\le \int\rho| \dot u |^2dx+\int \n |u\cdot\na u|^2dx \\
&\le \int\rho| \dot u |^2dx+C \| u \|_{L^4}^2 \| \na u\|_{L^4}^2 \\ 
&\le \int\rho| \dot u |^2dx+C\| \na^2 u\|_{L^2}^2.\ea \ee 
Similarly, we also have 
\be\la{sn4313}\ba 
\|\nabla u_t\|_{L^2}^2 
&\le \| \nabla \dot u \|_{L^2}^2+ \| \nabla(u\cdot\nabla u)\|_{L^2}^2  \\ 
&\le \|\nabla \dot u\|_{L^2}^2+ \|u\|_{L^{2q/(q-2)}}^2\|\nabla^2u \|_{L^q}^2+ \| \nabla u \|_{L^4}^4 \\ 
&\le \|\nabla \dot u\|_{L^2}^2+C\|\nabla^2u \|_{L^q}^2+ \| \nabla u \|_{L^4}^4,
\ea\ee 
which together with (\ref{LZ02}), (\ref{sn411}), (\ref{sn432}), (\ref{sn4312}) and (\ref{sn4313}) yields
\be\la{sn4314}\ba
\sup_{0\le t\le T} t \| \sqrt{\n} u_t \|^2_{L^2} 
+ \int_0^T \| \sqrt{\n} u_t \|^2_{L^2} + t\|u_t\|_{H^1}^2 dt
\le C.
\ea\ee
Combining this with (\ref{sn432}) leads to (\ref{sn431}), and we finish the proof of Lemma \ref{lsn22}.
\end{proof}

Thereafter, we assume the initial data $(\n_0,u_0)$ satisfy both the regularity condition $(\ref{csol1})_2$
and the compatibility condition (\ref{csol2}).
\begin{lemma}\la{lcn21}
There exists a positive constant $C$ depending only on  
$T,\ p,\ \ga,\ \mu,\ \nu,\ E_0,\ \| \rho_0 \|_{L^\infty}$, $\| \na \n_0 \|_{L^2 \cap L^q}$, $\tilde{\n}$,
$\| \na u_0 \|_{H^1}$ and $\| g \|_{L^2}$, such that
\be\la{cn4301}\ba
\sup_{0\le t\le T} \left( \| u \|^2_{H^2} + \| \sqrt{\n} u_t \|^2_{L^2} \right)
+ \int_0^T \left( \|\nabla^2 u\|_{L^q}^2 + \| u_t \|^2_{H^1} \right) dt
\le C.
\ea\ee
\end{lemma}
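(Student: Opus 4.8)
The plan is to run the argument of Lemma \ref{c21} in the case $\tilde{\n}>0$, replacing the weighted Poincar\'e device used there by the estimate (\ref{LZ02}) of Lemma \ref{LZ1}, which is available because (\ref{sn431}) gives $\sup_{0\le t\le T}\|\n-\tilde{\n}\|_{L^2}\le C$. First I would use the compatibility condition (\ref{csol2}) together with the identity $\n\du=\mu\Delta u+(\mu+\lm)\na\div u-\na P$ (a consequence of $(\ref{ns})$) to get $\sqrt{\n_0}\,\du(\cdot,0)=-g_3\in L^2$, so that $\int\n|\du|^2\,dx$ is finite at $t=0$ with a bound depending only on $\|g_3\|_{L^2}$. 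Every term on the right-hand side of the differential inequality (\ref{sn417}) is then integrable on $(0,T)$: $\|\na u\|_{L^2}^2$ and $\|\sqrt{\n}\,\du\|_{L^2}^2$ by (\ref{sn418}), $\|P-P(\tilde{\n})\|_{L^4}^4\le\|P-P(\tilde{\n})\|_{L^\infty}^2\|P-P(\tilde{\n})\|_{L^2}^2\le C$ by (\ref{cp02}) and (\ref{cp051}), and $\|\na u\|_{L^4}^4$ by (\ref{sn419}); the correction term $\frac{2(\nu-\mu)}{\nu}\int G\,\pa_i u\cdot\na u^i\,dx$ appearing inside the time-derivative on the left is bounded via (\ref{pd120}). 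Integrating (\ref{sn417}) over $(0,t)$ and absorbing the correction term upgrades the weighted estimate (\ref{sn411}) to $\sup_{0\le t\le T}\int\n|\du|^2\,dx+\int_0^T\|\na\du\|_{L^2}^2\,dt\le C$.

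The second step is to bootstrap the spatial second-order regularity of $u$. Writing $(\ref{ns})_2$ through $G$ and $\o$ as in (\ref{gw}), Lemmas \ref{estg} and \ref{dc} give, for $p\in\{2,q\}$,
\[
\|\na^2 u\|_{L^p}\le C\left(\|\n\du\|_{L^p}+\|\na\n\|_{L^p}+\|\na P\|_{L^p}\right)\le C\left(\|\n\du\|_{L^p}+1\right),
\]
using (\ref{sn431}) and $\|\na P\|_{L^p}\le C\|\n\|_{L^\infty}^{\ga-1}\|\na\n\|_{L^p}$. For $p=2$, $\|\n\du\|_{L^2}\le\|\n\|_{L^\infty}^{1/2}\|\sqrt{\n}\,\du\|_{L^2}$ and Step 1 yield $\sup_{0\le t\le T}\|\na^2 u\|_{L^2}\le C$. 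For $p=q$, (\ref{LZ02}) combined with (\ref{gn11}) gives $\|\du\|_{L^q}\le C\|\du\|_{L^2}^{2/q}\|\na\du\|_{L^2}^{1-2/q}\le C(\|\sqrt{\n}\,\du\|_{L^2}+\|\na\du\|_{L^2})$, hence $\|\n\du\|_{L^q}\le C(\|\sqrt{\n}\,\du\|_{L^2}+\|\na\du\|_{L^2})$; squaring, integrating in time, and invoking Step 1 together with (\ref{sn418}) give $\int_0^T(\|\na^2 u\|_{L^2}^2+\|\na^2 u\|_{L^q}^2)\,dt\le C$.

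The bounds on $u_t$ then follow from the identities already established in the proof of Lemma \ref{lsn22}. Indeed, (\ref{sn4312}) together with Step 1 and $\sup_{0\le t\le T}\|\na^2 u\|_{L^2}\le C$ gives $\sup_{0\le t\le T}\|\sqrt{\n}\,u_t\|_{L^2}^2\le C$, while (\ref{sn4313}) together with Steps 1--2 and (\ref{sn419}) gives $\int_0^T\|\na u_t\|_{L^2}^2\,dt\le C$. Finally $\|u\|_{H^2}\le C(\|u\|_{H^1}+\|\na^2 u\|_{L^2})\le C$ by (\ref{sn431}) and Step 2, and $\|u_t\|_{L^2}^2\le C(\|\sqrt{\n}\,u_t\|_{L^2}^2+\|\na u_t\|_{L^2}^2)$ by (\ref{LZ02}), so that $\int_0^T\|u_t\|_{H^1}^2\,dt\le C$; collecting these estimates yields (\ref{cn4301}).

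The only place where more than routine bookkeeping is needed is the $L^q$-in-space control of $\na^2 u$ in Step 2: since Poincar\'e's inequality fails on $\rr$, $\|\du\|_{L^q}$ cannot be estimated directly from $\|\na\du\|_{L^2}$, and the estimate must be routed through Lemma \ref{LZ1} --- which is exactly where the hypothesis $\n_0-\tilde{\n}\in L^2$ enters. Once the compatibility condition (\ref{csol2}) is used to supply a finite initial value for $\sqrt{\n}\,\du$, everything else is a rerun of the unweighted versions of the estimates in Lemmas \ref{lsn21}--\ref{lsn22}.
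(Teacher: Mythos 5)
Your proposal is correct and follows essentially the same route as the paper: use the compatibility condition to give $\sqrt{\n}\dot u$ a finite $L^2$ value at $t=0$, integrate (\ref{sn417}) with (\ref{sn418}), (\ref{sn419}) and (\ref{pd120}) to upgrade (\ref{sn411}) to the unweighted bound on $\sup_t\|\sqrt{\n}\dot u\|_{L^2}^2+\int_0^T\|\na\dot u\|_{L^2}^2dt$, and then pass to (\ref{cn4301}) via the elliptic estimate (\ref{s434}) and the identities (\ref{sn4312})--(\ref{sn4313}). The only cosmetic difference is that where the paper cites (\ref{s422}) for the $L^q$ bound on $\n\dot u$, you rederive it through Lemma \ref{LZ1} and (\ref{gn11}), which is exactly the adaptation the $\tilde{\n}>0$ case requires.
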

\begin{proof}
In view of the compatibility condition (\ref{csol2}), we are able to define
\be\la{cn413}\ba
\sqrt{\n} \dot{u}(x,t=0)=g(x).
\ea\ee
Then, integrating (\ref{sn417}) over $(0,T)$ and applying (\ref{sn418}) and (\ref{sn419}), we derive
\be\la{cn411}\ba
\sup_{0\le t\le T}
\int\n|\dot u|^2dx
+\int_0^{T}\int|\na\dot u|^2dxdt\le C,
\ea\ee
which together with (\ref{s434}), (\ref{s422}), (\ref{sn4312}) and (\ref{sn4313}) yields (\ref{cn4301}),
and we finish the proof of Lemma \ref{lsn22}.
\end{proof}

To extend the local classical solution, we require the following higher-order estimates.
Since the proofs of these estimates are similar to those in \cite{HLX2}, we omit the proofs.
\begin{lemma}\la{lcn26}
There exists a positive constant C depending only on $T,\ \mu,\ \nu,\ \ga,\ E_0$, 
$\tilde{\rho}$, $\| \rho_0-\tilde{\rho} \|_{W^{2,q}}$, $\| \na^2 P(\rho_0) \|_{L^q}$, $\| \na u_0 \|_{H^1}$ and $\|g\|_{L^2}$, such that
\be\la{cn431}\ba 
& \sup_{t\in[0,T]} \left(\norm[H^2]{\rho-\tilde{\rho}} 
+ \norm[H^2]{ P(\rho)-P(\tilde{\rho}) }+ \|\n_t\|_{H^1}+\|P_t\|_{H^1} \right) \\
&\quad + \int_0^T\left(\| \na^3 u\|^2_{L^2}+\|\n_{tt}\|_{L^2}^2 +\|P_{tt}\|_{L^2}^2\right)dt \le C,
\ea\ee

\be\la{cn442}\ba
\sup\limits_{0\le t\le T} t \left(\| \na u_t\|^2_{L^2}+\| \na^3 u\|^2_{L^2} \right) 
+ \int_0^T t\left(\| \sqrt{\n} u_{tt}\|^2_{L^2}+\| \na^2 u_t\|^2_{L^2}\right)dt
\le C,
\ea\ee

\be\la{cn451} \ba
\sup_{0\leq t\leq T}\left(\|\nabla^2 \n\|_{L^q } +\|\nabla^2 P  \|_{L^q }\right) \leq C,
\ea\ee

\be\la{cn461}\ba
\sup_{0\leq t\leq T} t\left( \| \sqrt{\n} u_{tt}\|_{L^2}+  \|\na^3 u \|_{L^q} + \|\na^2
u_t \|_{L^2}  \right) +\int_{0}^T  t^2 \|\nabla u_{tt}\|_{L^2}^2 dt\leq C.
\ea\ee
\end{lemma}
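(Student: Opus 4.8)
The plan is to follow the framework of \cite{HLX2} (and its two-dimensional Cauchy-problem counterpart \cite{LLL}), the only structural change being that every appeal to Poincar\'e's inequality is replaced by Lemma \ref{LZ1}, exactly as in the lower-order estimates of Lemmas \ref{lsn21}--\ref{lcn21}. Throughout, I would repeatedly combine the elliptic estimate \eqref{p2} of Lemma \ref{estg}, the div--curl estimate \eqref{dc1} of Lemma \ref{dc} and the identity $\mu\Delta u+(\mu+\lm)\na\div u=\rho\dot u+\na(P-P(\tilde{\rho}))$ to bound $\na^3 u$ in $L^2$ and $L^q$ by $\na(\rho\dot u)$ and $\na^2 P$, and conversely use the transport structure of $\rho$ and $P$ to feed $\na^3 u$ back into the density estimates; all two-dimensional interpolations use \eqref{gn11}--\eqref{gn12} together with $\|u\|_{L^p}\le C(\|\sqrt{\rho}\,u\|_{L^2}+\|\na u\|_{L^2})$ from Lemma \ref{LZ1}.

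First I would integrate the Beale--Kato--Majda inequality of Lemma \ref{bkm}: since Lemmas \ref{lsn22} and \ref{lcn21} already give $\na u\in L^\infty(0,T;L^2)$, $\na\rho\in L^\infty(0,T;L^2\cap L^q)$ and $\int_0^T\|\na^2 u\|_{L^q}^2\,dt\le C$, this yields $\int_0^T\|\na u\|_{L^\infty}\,dt\le C$. Next, differentiating the pressure equation $P_t+u\cdot\na P+\ga P\div u=0$ twice in space and testing against $\na^2 P$ in $L^2$ and against $|\na^2 P|^{q-2}\na^2 P$ in $L^q$ gives a Gr\"onwall inequality for $\|\na^2 P\|_{L^2\cap L^q}$ whose forcing is controlled by $\|\na u\|_{L^\infty}\|\na^2 P\|_{L^2\cap L^q}$ plus $\|\na^3 u\|_{L^2\cap L^q}$; using the elliptic bound $\|\na^3 u\|_{L^p}\le C\|\na(\rho\dot u)\|_{L^p}+C\|\na^2 P\|_{L^p}$ and the fact that $\na(\rho\dot u)=\na(\rho u_t)+\na(\rho u\cdot\na u)\in L^2(0,T;L^2)$ — which follows from $u_t\in L^2(0,T;H^1)$ (Lemma \ref{lcn21}), the density bounds, and the interpolations above — the loop closes and yields $\sup_t\|\na^2 P\|_{L^2}^2+\int_0^T\|\na^3 u\|_{L^2}^2\,dt\le C$. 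The same computation applied to $\rho$ gives $\sup_t\|\na^2\rho\|_{L^2}\le C$, and then $\rho_t=-\div(\rho u)$, $P_t=-u\cdot\na P-\ga P\div u$ give $\rho_t,P_t\in L^\infty(0,T;H^1)$, while one further $t$-derivative combined with $u_t\in L^2(0,T;H^1)$ gives $\rho_{tt},P_{tt}\in L^2(0,T;L^2)$; this establishes \eqref{cn431}.

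For \eqref{cn442} I would differentiate $(\ref{ns})_2$ in $t$, test the result against $u_{tt}$, multiply by $t$, and integrate in space and time, treating the right-hand side exactly as in the proof of Lemma \ref{lsn22} but now with the available bounds on $\na^2 u$ and $\na^2 P$; this produces $\sup_t t\|\na u_t\|_{L^2}^2+\int_0^T t\|\sqrt{\rho}\,u_{tt}\|_{L^2}^2\,dt\le C$, and inserting $\na u_t$ into the elliptic estimate for the $u_t$-equation and into the momentum system gives $\int_0^T t\|\na^2 u_t\|_{L^2}^2\,dt\le C$ and $\sup_t t\|\na^3 u\|_{L^2}^2\le C$. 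With the weighted control of $\na^2 u_t$ in hand, the $L^q$ transport estimate for $\na^2\rho$ and $\na^2 P$ closes unconditionally — a H\"older argument near $t=0$ absorbs the factor $t^{-(q-2)/(q+2)}$ coming from $\|\na u_t\|_{L^q}\le C\|\na u_t\|_{L^2}^{2/q}\|\na^2 u_t\|_{L^2}^{1-2/q}$, while $\na(\rho u\cdot\na u)$ is handled in $L^1(0,T;L^q)$ using the exponent $(q+1)/q$ already present in \eqref{sn431} — giving \eqref{cn451}. Finally, for \eqref{cn461} I would differentiate $(\ref{ns})_2$ twice in $t$, test against $u_{tt}$, weight by $t^2$, and integrate; combined with the $L^q$ elliptic estimates for $\na^3 u$ and the $W^{2,q}$ bounds on $\rho-\tilde{\rho}$ and $P-P(\tilde{\rho})$, this yields $\sup_t t\bigl(\|\sqrt{\rho}\,u_{tt}\|_{L^2}+\|\na^3 u\|_{L^q}+\|\na^2 u_t\|_{L^2}\bigr)+\int_0^T t^2\|\na u_{tt}\|_{L^2}^2\,dt\le C$.

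The main obstacle is the circular coupling between the density and velocity estimates: the transport bounds for $\na^2 P$ and $\na^2\rho$ require $\na^3 u\in L^1(0,T;L^2\cap L^q)$, while the elliptic bound for $\na^3 u$ requires $\na^2 P$ together with $\na(\rho\dot u)$. In a bounded or periodic domain one closes this with a Poincar\'e-type bound on $u_t$; here one must instead exploit that Lemma \ref{lcn21} already supplies $u_t\in L^2(0,T;H^1)$ \emph{without} a time weight, so that $\na(\rho\dot u)\in L^2(0,T;L^2)$ and the Gr\"onwall loop for the $L^2$ part of \eqref{cn431} is unconditional. The $L^q$ part is more delicate — it genuinely needs the weighted estimate for $\na^2 u_t$ from \eqref{cn442} — which is why $\na^3 u$ in \eqref{cn461} carries the weight $t$, and care near $t=0$ is required at each step where a negative power of $t$ appears.
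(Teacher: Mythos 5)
Your outline follows exactly the route the paper intends: the paper omits the proof of this lemma, saying it is ``similar to those in \cite{HLX2}'', and your sketch reconstructs that standard scheme (integrability of $\|\na u\|_{L^\infty}$ in time, transport estimates for $\na^2\n$ and $\na^2 P$ coupled through the elliptic estimates \eqref{p2}, \eqref{dc1} to $\na^3 u$, then time-weighted energy estimates for $u_{tt}$ and $\na^2 u_t$) with the right adaptations for the Cauchy problem with $\tilde\n>0$, namely Lemma \ref{LZ1} in place of Poincar\'e and the unweighted $u_t\in L^2(0,T;H^1)$ from Lemma \ref{lcn21} closing the $L^2$ Gr\"onwall loop before the weighted $L^q$ step. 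This is essentially the same approach as the paper's (omitted) argument; only cosmetic points would need tightening in a full write-up, e.g.\ the treatment of the right-hand side in your \eqref{cn442} step is modeled on the computation of Lemma \ref{s24} (using the $\n_t,P_t\in L^\infty(0,T;H^1)$ and $\n_{tt},P_{tt}\in L^2(0,T;L^2)$ bounds you establish first) rather than Lemma \ref{lsn22}, and the precise negative power of $t$ absorbed by H\"older near $t=0$ is immaterial as long as it is less than one.
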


\section{Proofs of Theorem \ref{th0}--\ref{th3}}
This section presents the proof of our main theorems.

\noindent\textbf{Proof of Theorem \ref{th2}}.
Let $(\n_0,u_0)$  be the initial data in Theorem \ref{th2}, 
satisfying (\ref{wsol1}), $(\ref{wsol01})_1$, $(\ref{ssol1})_1$, $(\ref{csol1})_1$, and (\ref{csol2}).
For $\de \in (0,1)$, following an approach similar to that in \cite{HL}, we may construct a sequence of smooth functions $\n_0^\de$ satisfying
\be\la{czbj1}\ba
\de \le \n_0^\de \le \| \n_0 \|_{L^\infty} + 1, \quad \frac12 \le \int_{B_{N_0}} \n^\de_0 dx
\le \int_{\rr} \n^\de_0dx \le \frac32,
\ea\ee
and
\be\la{czbj2}\ba
\begin{cases}
\bar x^a \n_{0}^{\de} \rightarrow \bar x^a \n_{0} \quad & {\rm in}\,\, L^1(\rr)\cap H^{1}(\rr)\cap W^{1,q}(\rr), \\
( \na^2 \n^\de_0,\na^2 P(\n_0^\de) ) \rightarrow ( \na^2 \n_0,\na^2 P(\n_0) ) \quad & {\rm in}\,\, L^{q}(\rr), \\
\bar{x}^{\de_0} ( \na^2 \n^\de_0,\na^2 P(\n_0^\de) ) \rightarrow ( \na^2 \n_0,\na^2 P(\n_0) ) \quad & {\rm in}\,\, L^2(\rr),
\end{cases}
\ea\ee
as $\de \rightarrow 0$.

By the local existence result in Lemma \ref{lct},
there exists a $T_\de>0$ such that the problem (\ref{ns})--(\ref{i3}) with $\tilde{\n}=0$ and the initial data $(\n_0^\de,u_0)$
has a unique classical solution $(\n^\de,u^\de)$ on $\rr \times (0,T_\de]$.
The a priori estimates Proposition \ref{l5} and Lemmas \ref{c21} and \ref{c26} ensure that this local classical solution $(\n^\de,u^\de)$ can be extended to $(0,T]$ for any $T>0$, provided $\nu \ge \nu_1$.
Furthermore, $(\n^\de,u^\de)$ satisfies all the estimates listed in Lemmas \ref{c21} and \ref{c26} with $C$ independent of $\de$.
Letting $\de \to 0$ and applying standard arguments (see \cite{HL,LX2}), we deduce that the problem (\ref{ns})--(\ref{i3}) with $\tilde{\n}=0$ has a global classical solution $(\n,u)$ satisfying the properties listed in Theorem \ref{th2} when $\nu \ge \nu_1$.
Moreover, the proof of uniqueness of $(\n,u)$ satisfying (\ref{ssol4}) and (\ref{csol3}) is similar to \cite{LLL}.

In view of Proposition \ref{1cpp1}, Lemmas \ref{lsn21}--\ref{lcn26},
and following the arguments similar to the case $\tilde{\n}=0$, we can obtain that for $\nu \ge \nu_2$, the problem (\ref{ns})--(\ref{i3}) with $\tilde{\n}>0$ has a unique global classical solution $(\n,u)$ satisfying
(\ref{2wsol2}), (\ref{ssol5}), and (\ref{csol4}).
It remains to prove (\ref{cpwsol4}).

From (\ref{1cpp2}), (\ref{1cp02}), (\ref{1cp310}), (\ref{1cp311}), and (\ref{1cp316}), we conclude that
\be\la{qkjltb1}\ba
\int_1^\infty \left( \| \n - \tilde{\n} \|^4_{L^4} + \| \na u \|^2_{L^2} + \| \na u \|^4_{L^4} \right) dt \le C.
\ea\ee
Multiplying $(\ref{ns})_1$ by $4(\n-\tilde{\n})^3$ and integrating by parts over $\rr$, we obtain
\be\la{qkjltb2}\ba
\frac{d}{dt} \left( \| \n - \tilde{\n} \|^4_{L^4} \right)
& = \int \left( (\n - \tilde{\n})^4 \div u - 4 (\n - \tilde{\n})^3 \n \div u \right) dx \\
& \le C \| \n - \tilde{\n} \|^4_{L^4} + C \| \na u \|^2_{L^2},
\ea\ee
which yields that, for all $1 \le N \le s \le N+1 \le t \le N+2$,
\be\la{qkjltb3}\ba
\| \n(\cdot,t) - \tilde{\n} \|^4_{L^4}
\le \| \n(\cdot,s) - \tilde{\n} \|^4_{L^4} + C \int_N^{N+1} \left( \| \n - \tilde{\n} \|^4_{L^4} + C \| \na u \|^2_{L^2} \right) dt.
\ea\ee
Integrating (\ref{qkjltb3}) with respect to $s$ over $[N,N+1]$ gives
\be\la{qkjltb4}\ba
\| \n(\cdot,t) - \tilde{\n} \|^4_{L^4}
\le C \int_N^{N+1} \left( \| \n - \tilde{\n} \|^4_{L^4} + C \| \na u \|^2_{L^2} \right) dt,
\ea\ee
which together with (\ref{qkjltb1}) implies
\be\la{qkjltb5}\ba
\lim_{t \to \infty} \| \n(\cdot,t) - \tilde{\n} \|_{L^4} = 0.
\ea\ee
It follows from (\ref{1cpp2}), (\ref{1cp02}), (\ref{qkjltb5}), and H\"older's inequality that for any $s \in (2,\infty)$,
\be\la{qkjltb6}\ba
\lim_{t \to \infty} \| \n(\cdot,t) - \tilde{\n} \|_{L^s} = 0.
\ea\ee
On the other hand, direct calculation shows
\be\la{qkjltb7}\ba
\int_1^\infty \left| \frac{d}{dt}(\| \na u \|^2_{L^2}) \right| dt
& = 2 \int_1^\infty \left| \int \p_i u^j \p_i u^j_t dx \right| dt \\
& = 2 \int_1^\infty \left| \int \p_i u^j \p_i \left( \dot{u}^j - u^k \p_k u^j \right) dx \right| dt \\
& = \int_1^\infty \left| \int \left( 2 \p_i u^j \p_i \dot{u}^j - 2 \p_i u^j \p_i u^k \p_k u^j + |\na u|^2 \div u \right) dx \right| dt \\
& \le C \int_1^\infty \left( \| \na \dot{u} \|^2_{L^2} + \| \na u \|^2_{L^2} + \| \na u \|^4_{L^4} \right) dt \le C,
\ea\ee
which along with (\ref{qkjltb1}) leads to
\be\la{qkjltb8}\ba
\lim_{t \to \infty} \| \na u \|_{L^2} = 0.
\ea\ee
Furthermore, using (\ref{1cp27}) and (\ref{p2}), we derive for any $2 \le r <\infty$,
\be\la{qkjltb9}\ba
\| \na u \|_{L^r}
& \le C \left( \| \div u \|_{L^r} + \| \o \|_{L^r} \right) \\
& \le C \left( \| G \|_{L^r} + \| P-P(\tilde{\n}) \|_{L^r} + \| \o \|_{L^r} \right) \\
& \le C \left( 1 + \| \na u \|_{L^2} + \| \n \dot{u} \|_{L^2} \right),
\ea\ee
which yields
\be\la{qkjltb10}\ba
\sup_{1 \le t <\infty} \| \na u \|_{L^r} \le C.
\ea\ee
From (\ref{qkjltb10}), (\ref{qkjltb8}), and H\"older's inequality, we conclude that for any $2 \le r <\infty$,
\be\la{qkjltb11}\ba
\lim_{t \to \infty} \| \na u \|_{L^r} = 0.
\ea\ee
Combining (\ref{qkjltb6}) with (\ref{qkjltb11}) yields (\ref{cpwsol4}), thus completing the proof of Theorem \ref{th2}.

\noindent\textbf{Proofs of Theorems \ref{th0} and \ref{th1}}.
By employing standard compactness arguments in \cite{F,L2,LZZ}, the proofs are similar to that of Theorem \ref{th2}, and hence are omitted.

\noindent\textbf{Proof of Theorem \ref{th01}}.
For any $0<T<\infty$, when $\nu>\nu_1$,
we deduce from (\ref{wsol2}), (\ref{cp1}), (\ref{cp032}), (\ref{pd11}) and (\ref{pt1})
that $\{\n^{\nu}\}_\nu$ is bounded in $L^\infty(0,T;L^1) \cap L^\infty(\rr \times (0,T))$, and for any $0<\tau<T$, $0<R<\infty$,
$\{u^{\nu}\}_\nu$ is bounded in $L^\infty(\tau,T;H^1(B_R))\cap L^2(0,T;H^1(B_R))$,
and $\{ \na u^{\nu}\}_\nu$ is bounded in $L^\infty(\tau,T;L^2(\rr))\cap L^2(0,T;L^2(\rr))$.

Moreover, it follows from (\ref{pt1}), (\ref{gw}), (\ref{p2}), (\ref{gn11})
and H\"older's inequality that
\be\la{ins01}\ba
\| u^{\nu}_t \|_{L^2(B_R)}
& \le C \left( \| \dot{u^{\nu}} \|_{L^2(B_R)} + \| u^{\nu} \cdot \na u^{\nu} \|_{L^2(B_R)} \right) \\
& \le C \left( \| \sqrt{\n^{\nu}} \dot{u^{\nu}} \|_{L^2(B_R)} 
+ \| \na \dot{u^{\nu}} \|_{L^2(B_R)}
+ \| u^{\nu} \|_{L^4(B_R)} \| \na u^{\nu} \|_{L^4(B_R)} \right) \\
& \le C \| \sqrt{\n^{\nu}} \dot{u^{\nu}} \|_{L^2} + C \| \na \dot{u^{\nu}} \|_{L^2}
+ \frac{C}{\nu} \| u^{\nu} \|_{H^1(B_R)} \| G^\nu \|^{1/2}_{L^2} \| \n^\nu \dot{u^{\nu}} \|^{1/2}_{L^2} \\
& \quad + C \| u^{\nu} \|_{H^1(B_R)} 
\left( \| \o^\nu \|^{1/2}_{L^2} \| \n^\nu \dot{u^{\nu}} \|^{1/2}_{L^2}
+\| P^\nu \|_{L^4} \right),
\ea\ee
which, together with (\ref{cp032}) and (\ref{pd11}), 
implies that $\{u^{\nu}\}_\nu$ is bounded in $H^1(\tau,T;L^2(B_R))$ 
for every fixed constant $R$.

Therefore, without loss of generality, we may assume
there exists $(\n,u)$ which satisfies that, for any $0<R<\infty$,
$\n \in L^\infty(\rr \times (0,T))$, $u \in L^\infty(\tau,T;H^1(B_R))\cap L^2(0,T;H^1(B_R))$,
$u_t \in L^2(B_R \times (\tau,T))$,
$ \na u \in L^\infty(\tau,T;L^2(\rr))\cap L^2(0,T;L^2(\rr)) $,
and a subsequence $(\n^n,u^n)$ of $(\n^{\nu},u^{\nu})$ such that
\be\la{ins1}\ba
\begin{cases}
\n^n \rightharpoonup \n  \mbox{ weakly * in } L^\infty(\rr \times (0,T)),\\
u^n \rightharpoonup u  \mbox{ weakly * in } \ L^\infty(\tau,T;H^1(B_R))\cap L^2(0,T;H^1(B_R)), \\
u^n \to u  \mbox{ strongly  in } \ L^\infty(\tau,T;L^p(B_R)), \\
\na	u^n \rightharpoonup \na u  \mbox{ weakly * in } \ L^\infty(\tau,T;L^2(\rr))\cap L^2(0,T;L^2(\rr)),
\end{cases}
\ea\ee 
for any $1 \le p < \infty$ and $0<R<\infty$.

Furthermore, we set $G^n:=n\div u^n-P^n$ and $\o^n:=\na^\bot \cdot u^n$.
It follows from (\ref{cp1}), Sobolev embedding and H\"older's inequality that for any $2<r<\infty$
\be\la{ins03}\ba
\| G^n \|_{L^r} 
& = \| (-\Delta)^{-1} \div (\n^n \dot{u^n}) \|_{ L^r } \\
& \le C\| \n^n \dot{u^n} \|_{ L^{\frac{2r}{r+2}} } \\
& \le C\| \sqrt{\n^n} \|_{ L^{2r} }  \| \sqrt{\n^n} \dot{u^n} \|_{L^2} \\
& \le C\| \n^n \|^{\frac{1}{2}}_{ L^{r} }  \| \sqrt{\n^n} \dot{u^n} \|_{L^2}.
\ea\ee
Combining this with (\ref{p2}) and (\ref{cp032}), 
we conclude that for any $2<r<\infty$ and $0<R<\infty$,
$\{G^n\}_n$ is bounded in $L^2(\tau,T;L^r) \cap L^2(\tau,T;H^1(B_R))$,
$\{\na G^n\}_n$ and $\{\o^n\}_n$ are bounded in $L^2(\tau,T;L^2(\rr))$.
Hence, without loss of generality, we can assume that there exist $\pi \in L^2(\tau,T;L^r) \cap L^2(\tau,T;H^1(B_R))$
and $\na \pi \in L^2(\tau,T;L^2(\rr)) $ such that
\be\la{ins3}\ba
G^n \rightharpoonup - \pi \quad  \mbox{ weakly in } L^2(\tau,T;H^1(B_R)),
\ea\ee
for any $0<R<\infty$.
Then, from $(\ref{ns})_2$, we deduce that $(\n^n,u^n)$ satisfies
\be\la{ins4}\ba
(\n^n u^n)_t+\div(\n^n u^n\otimes u^n) -\na G^n -\mu \na^{\bot} w^n =0.
\ea\ee
By taking the limit as $n \to \infty$, we obtain that $(\n,u)$ satisfies
\be\la{ins6}\ba
\begin{cases}
\n_t+\div(\n u)=0,\\
(\n u)_t+\div(\n u\otimes u) -\mu \na^{\bot} w + \na \pi =0,
\end{cases} 
\ea\ee
On the other hand, based on (\ref{cp1}), (\ref{gw}) and (\ref{cp032}), we derive
\be\la{ins7}\ba
\div u^n \to 0  \  \mbox{ strongly in } L^2(\rr \times (0,T))\cap L^\infty( (\tau,T); L^2),
\ea\ee
which together with (\ref{ins1}) yields $\div u=0$.
This fact, together with the equality $\Delta u = \na \div u + \na^{\bot} w $, leads to $\na^{\bot} w=\Delta u$.
Hence, $(\n,u)$ satisfies (\ref{isol2}) and (\ref{isol3}).

Next, if the initial data $(\n_0,u_0)$ satisfies (\ref{ws}) and (\ref{lws00}),
for any $0<T<\infty$, when $\nu \ge \nu_1$,
we deduce from (\ref{pt1}), (\ref{cp1}), (\ref{cp031}) and (\ref{wsol2})
that $\{\n^{\nu}\}_\nu$ is bounded in $L^\infty(\rr \times (0,T))$,
and $\{\na u^{\nu}\}_\nu$ is bounded in $L^\infty(0,T;L^2)$.
Moreover, by directly integrating (\ref{pd118}) over $(0,T)$ and define
$\sqrt{\n} \du (x,t=0)= \sqrt{\n_0} g_1$,
after using (\ref{pd120}) and (\ref{cp031}), we derive that
$\{\sqrt{\n^\nu} \dot{u^{\nu}}\}_\nu$ is bounded in $L^\infty(0,T;L^2)$
and $\{\na \dot{u^{\nu}}\}_\nu$ is bounded in $L^2(\rr \times (0,T))$.
In addition, from (\ref{p2}) we obtain that the sequences
$\{\na G^{\nu}\}_\nu$ and $\{\na \o^{\nu}\}_\nu$ are bounded in $L^\infty(0,T;L^2)$.
Therefore, by arguments similar to those above,
we conclude that the sequence $(\n^{\nu},u^{\nu})$ has a subsequence that converges to the global solution of (\ref{isol2}), and $(\n,u)$ satisfies (\ref{lws1}).
Then, according to the result in \cite[Proposition 4.2]{PT}, the system (\ref{isol2}) with initial data $(\n_0,u_0)$ satisfying (\ref{lws1}) has a unique global solution.
This implies that the whole sequence $(\n^{\nu},u^{\nu})$ converges to the global solution of (\ref{isol2}), and $(\n,u)$ satisfies (\ref{lws1}).
This completes the proof of Theorem \ref{th01}.

\noindent\textbf{Proof of Theorem \ref{th001}}.
For any $0<T<\infty$, when $\nu>\nu_2$,
we deduce from (\ref{wsol2}), (\ref{cp1}), (\ref{1cpp2}) and (\ref{LZ02})
that $\{\n^{\nu}\}_\nu$ is bounded in $L^\infty(\rr \times (0,T))$, and for any $0<\tau<T$,
$\{u^{\nu}\}_\nu$ is bounded in $L^\infty(\tau,T;H^1)\cap L^2(0,T;H^1)$.
Additionally, similar to (\ref{ins01}), by using
(\ref{pt1}), (\ref{gw}), (\ref{p2}), (\ref{gn11}) and Lemma \ref{LZ1} leads to
\be\la{}\ba
\| u^{\nu}_t \|_{L^2}
& \le C \| \sqrt{\n^{\nu}} \dot{u^{\nu}} \|_{L^2} + C \| \na \dot{u^{\nu}} \|_{L^2}
+ \frac{C}{\nu} \| u^{\nu} \|_{H^1} \| G^\nu \|^{1/2}_{L^2} \| \n^\nu \dot{u^{\nu}} \|^{1/2}_{L^2} \\
& \quad + C \| u^{\nu} \|_{H^1} 
\left( \| \o^\nu \|^{1/2}_{L^2} \| \n^\nu \dot{u^{\nu}} \|^{1/2}_{L^2}
+\| P^\nu -P(\tilde{\n}) \|_{L^4} \right),
\ea\ee
which along with (\ref{1cpp2}) and (\ref{1cp04}), we obtain $\{u^{\nu}\}_\nu$ is bounded in $H^1(\tau,T;L^2(B_R))$.

Therefore, with a slight abuse of notation, 
there exists a subsequence $(\n^n,u^n)$ of $(\n^{\nu},u^{\nu})$ and
$\n \in L^\infty(\rr \times (0,T)), u \in L^\infty(\tau,T;H^1)\cap L^2(0,T;H^1)$ such that
\be\la{inss1}\ba
\begin{cases}
\n^n \rightharpoonup \n  \mbox{ weakly * in } L^\infty(\rr \times (0,T)), \\
u^n \rightharpoonup u  \mbox{ weakly * in } \ L^2(0,T;H^1) \cap L^\infty(\tau,T;H^1) \\
u^n \to u  \mbox{ strongly  in } \ L^\infty(\tau,T;L^p(B_R)),
\end{cases}
\ea\ee 
for any $1 \le p < \infty$ and $0<R<\infty$.

Then, we define $G^n:=n\div u^n-(P^n-P(\tilde{\n}))$ and $\o^n:=\na^\bot \cdot u^n$.
Together with (\ref{1cpp2}) and (\ref{p2}),
this implies that $\{\na G^n\}_n$ and $\{\o^n\}_n$ are bounded in $L^2(\rr \times (\tau,T))$.
Hence, without loss of generality, we can assume that there exists $ \Psi  \in L^2(\rr \times (\tau,T))$ such that
\be\la{inss3}\ba
\na G^n \rightharpoonup \Psi \quad  \mbox{ weakly in } L^2(\rr \times (\tau,T)).
\ea\ee
In addition, for any $\phi \in \left( \mathcal{D}(\rr) \right)^2$ with $\div \phi=0$ and $\psi \in \mathcal{D}(\tau,T)$, we have
\be\la{inss6}\ba
\int _\tau ^T \int \na G^n \cdot \phi dx \ \psi \ dt=0,
\ea\ee
Then, letting $n$ tend to $\infty$, by applying (\ref{inss3}) we can get
\be\la{inss7}\ba
\int _\tau ^T \int \Psi \cdot \phi dx \ \psi \ dt=0,
\ea\ee
which implies
\be\la{inss8}\ba
\int \Psi \cdot \phi dx =0, \ \ \mathrm{a.e.\ }t \in (\tau,T).
\ea\ee

Therefore, in light of the De Rham Theorem \cite[Proposition 1.1]{TR}, we can conclude that there exists a distribution $ \pi $ such that $ \Psi = -\na \pi$.

Let $n$ tend to $\infty$, and we can obtain that $(\n,u)$ satisfies
\be\la{inss9}\ba
\begin{cases}
\n_t+\div(\n u)=0,\\
(\n u)_t+\div(\n u\otimes u) -\mu \na^{\bot} w + \na \pi =0.
\end{cases} 
\ea\ee
Similar to the arguments for $\tilde{\n}=0$, we also have $\na^{\bot} w=\Delta u$ and $\div u=0$, and
$(\n,u)$ satisfies (\ref{isol2}) and (\ref{lisol3}).

Moreover, for any $0<T<\infty$, when $\nu \ge \nu_2$ and the initial data $(\n_0,u_0)$ satisfies (\ref{0lws}).
Through a similar argument analogous to the case of $\tilde{\n}=0$,
we deduce that $(\n^{\nu},u^{\nu})$ contains a subsequence
converging to a global solution of (\ref{isol2}), and $(\n,u)$ satisfies (\ref{0lws1}).
Therefore, invoking the uniqueness result from \cite{PT} for system (\ref{isol2}) with initial data $(\n_0,u_0)$ satisfying (\ref{0lws}),
we establish that the whole sequence $(\n^{\nu},u^{\nu})$ converges to the unique global solution of (\ref{isol2}), and $(\n,u)$ satisfies (\ref{0lws1}).
This completes the proof of Theorem \ref{th001}.

\noindent\textbf{Proof of Theorem \ref{th3}}.
The proof of Theorem \ref{th3} is similar to that of \cite[Theorem 1.2]{LX}, so we omit it here.

\begin {thebibliography} {99}
\bibitem{BKM}{\sc J.~T. Beale, T. Kato and A.~J. Majda}, 
{\em Remarks on the breakdown of smooth solutions for the $3$-D Euler equations}, 
Comm. Math. Phys. {\bf 94} (1984), no.~1, 61--66.




\bibitem{CCK}{\sc Y. Cho, H.~J. Choe and H. Kim}, 
{\em Unique solvability of the initial boundary value problems for compressible viscous fluids}, 
J. Math. Pures Appl. (9) {\bf 83} (2004), no.~2, 243--275.

\bibitem{CK}{\sc Y. Cho and H. Kim}, 
{\em On classical solutions of the compressible Navier-Stokes equations with nonnegative initial densities}, 
Manuscripta Math. {\bf 120} (2006), no.~1, 91--129.

\bibitem{CK2}{\sc H.~J. Choe and H. Kim}, 
{\em Strong solutions of the Navier-Stokes equations for isentropic compressible fluids}, 
J. Differential Equations {\bf 190} (2003), no.~2, 504--523.



\bibitem{CLMS}{\sc R.~R. Coifman, Lions, P. L, Meyer, Y, Semmes, S.}, 
{\em Compensated compactness and Hardy spaces}, 
J. Math. Pures Appl. (9) {\bf 72} (1993), no.~3, 247--286.

\bibitem{DM}{\sc R. Danchin and P.~B. Mucha}, 
{\em Compressible Navier-Stokes equations with ripped density},
 Comm. Pure Appl. Math. {\bf 76} (2023), no.~11, 3437--3492.




\bibitem{FC}{\sc C.~L. Fefferman}, 
{\em Characterizations of bounded mean oscillation}, 
Bull. Amer. Math. Soc. {\bf 77} (1971), 587--588.

\bibitem{F} {\sc E. Feireisl}, {\em
Dynamics of Viscous Compressible Fluids}, Oxford Lecture Series in Mathematics and
its Applications vol. 26, Oxford University Press, Oxford, 2004.

\bibitem{FNP}{\sc E. Feireisl, A. Novotn\'y{} and H. Petzeltov\'a}, 
{\em On the existence of globally defined weak solutions to the Navier-Stokes equations}, 
J. Math. Fluid Mech. {\bf 3} (2001), no.~4, 358--392.




 \bibitem{H4}{\sc D. Hoff}, 
 {\em Global existence for 1D, compressible, 
 isentropic Navier-Stokes equations with large initial data}, Trans. Amer. Math. Soc. {\bf 303} (1987), no.~1, 169--181.

\bibitem{H1}{\sc D. Hoff}, {\em Global solutions of the Navier-Stokes equations for multidimensional compressible flow with discontinuous initial data},
J. Differential Equations {\bf 120} (1995), no.~1, 215--254.

\bibitem{H2}{\sc D. Hoff}, {\em Strong convergence to global solutions for multidimensional flows of compressible, viscous fluids with polytropic equations of state and discontinuous initial data},
Arch. Rational Mech. Anal. {\bf 132} (1995), no.~1, 1--14.


\bibitem{H3}{\sc D. Hoff}, 
{\em Compressible flow in a half-space with Navier boundary conditions}, 
J. Math. Fluid Mech. {\bf 7} (2005), no.~3, 315--338.

\bibitem{HL2}{\sc X.-D. Huang and J. Li},
 {\em Existence and blowup behavior of global strong solutions to the two-dimensional barotrpic compressible Navier-Stokes system with vacuum and large initial data},
J. Math. Pures Appl. (9) {\bf 106} (2016), no.~1, 123--154.

\bibitem{HL}{\sc X.-D. Huang and J. Li}, 
{\em Global well-posedness of classical solutions to the Cauchy problem of two-dimensional barotropic compressible Navier-Stokes system with vacuum and large initial data},
SIAM J. Math. Anal. {\bf 54} (2022), no.~3, 3192--3214.

\bibitem{HLX3}{\sc X.-D. Huang, J. Li and Z. Xin}, 
{\em Blowup criterion for viscous baratropic flows with vacuum states}, 
Comm. Math. Phys. {\bf 301} (2011), no.~1, 23--35.

\bibitem{HLX1}{\sc X.-D. Huang, J. Li and Z. Xin}, 
{\em Serrin-type criterion for the three-dimensional viscous compressible flows},
SIAM J. Math. Anal. {\bf 43} (2011), no.~4, 1872--1886.

\bibitem{HLX2}{\sc X.-D. Huang, J. Li and Z. Xin}, 
{\em Global well-posedness of classical solutions with large oscillations and vacuum to the three-dimensional isentropic compressible Navier-Stokes equations}, 
Comm. Pure Appl. Math. {\bf 65} (2012), no.~4, 549--585.


\bibitem{K}{\sc T. Kato},
 {\em Remarks on the Euler and Navier-Stokes equations in ${\bf R}^2$}, 
Proc. Sympos. Pure Math., {\bf 45}, (1986),1--7.

\bibitem{KS}{\sc A.~V. Kazhikhov and V.~V. Shelukhin},
{\em Unique global solution with respect to time of
initial-boundary value problems for one-dimensional equations
of a viscous gas}
 Prikl. Mat. Meh. {\bf 41} (1977), no.~2J. Appl. Math. Mech. {\bf 41} (1977), no.~2.

\bibitem{LX3}{\sc Q.~H. Lei and C.~F. Xiong}, 
{\em Global Existence and Incompressible Limit for
Compressible Navier-Stokes Equations with Large Bulk Viscosity Coefficient 
and Large Initial Data}, arXiv:2507.01432.

\bibitem{LLL}{\sc J. Li, Z. Liang}, {\em On local classical solutions to the Cauchy problem of the two-dimensional barotropic compressible Navier-Stokes equations with vacuum},
J. Math. Pures Appl. (9) {\bf 102} (2014), no.~4, 640--671.

\bibitem{LX}{\sc J. Li and Z. Xin}, 
{\em Some uniform estimates and blowup behavior of global strong solutions to the Stokes approximation equations for two-dimensional compressible flows}, 
J. Differential Equations {\bf 221} (2006), no.~2, 275--308.

\bibitem{LX2}{\sc J. Li and Z. Xin}, 
{\em Global well-posedness and large time asymptotic behavior of classical solutions to the compressible Navier-Stokes equations with vacuum}, 
Ann. PDE {\bf 5} (2019), no.~1, Paper No. 7, 37 pp..

\bibitem{LZZ}{\sc J. Li, J.~W. Zhang and J.~N. Zhao}, 
{\em On the global motion of viscous compressible barotropic flows subject to large external potential forces and vacuum}, 
SIAM J. Math. Anal. {\bf 47} (2015), no.~2, 1121--1153.

\bibitem{LZ}{\sc Z. Luo}, {\em Local existence of classical solutions to the two-dimensional viscous compressible flows with vacuum},
Commun. Math. Sci. {\bf 10} (2012), no.~2, 527--554.

\bibitem{LZ2}{\sc X. Liao and S.M. Zodji}, 
{\em Global-in-time well-posedness of the compressible Navier-Stokes equations with striated density},
arXiv:2405.11900.

\bibitem{L1} {\sc P.L. Lions}, {\em Mathematical Topics in Fluid Mechanics. Vol. 1: Incompressible Models},
Oxford Lecture Series in Mathematics and its Applications, vol. 3, The Clarendon Press, Oxford University
Press, New York, 1996. Oxford Science Publications.

\bibitem{L2} {\sc P.L. Lions}, {\em Mathematical Topics in Fluid Mechanics. Vol. 2: Compressible Models},
Oxford Lecture Series in Mathematics and its Applications, vol. 10, The Clarendon Press, Oxford University
Press, New York, 1996. Oxford Science Publications.

\bibitem{MN1}{\sc A. Matsumura, T. Nishida}, {\em The initial value problem for the equations of motion
of viscous and heat-conductive gases},
J. Math. Kyoto Univ. {\bf 20}(1) (1980), 67--104.



\bibitem{N}{\sc J. Nash}, {\em Le probl\`{e}me de Cauchy pour les \'{e}quations diff\'{e}rentielles d'un fluide g\'{e}n\'{e}ral},
 Bull. Soc. Math. France {\bf 90} (1962), 487--497 (French).

\bibitem{NI}{\sc L. Nirenberg}, {\em On elliptic partial differential equations},
 Ann. Scuola Norm. Sup. Pisa Cl. Sci. (3) {\bf 13} (1959), 115--162.


\bibitem{PT}{\sc C. Prange, J. Tan}, {\em Free boundary regularity of vacuum states for incompressible viscous flows in unbounded domains},
arXiv:2310.09288.




\bibitem{SS}{\sc R. Salvi and I. Stra\v skraba}, 
{\em Global existence for viscous compressible fluids and their behavior as $t\to\infty$}, 
J. Fac. Sci. Univ. Tokyo Sect. IA Math. {\bf 40} (1993), no.~1, 17--51.

\bibitem{S1}{\sc D. Serre},
{\em Solutions faibles globales des \'equations de Navier-Stokes pour un fluide compressible}, 
C. R. Acad. Sci. Paris S\'er. I Math. {\bf 303} (1986), no.~13, 639--642.

\bibitem{S2}{\sc D. Serre},
{\em Sur l'\'equation monodimensionnelle d'un fluide visqueux, compressible et conducteur de chaleur,} 
C. R. Acad. Sci. Paris S\'er. I Math. {\bf 303} (1986), no.~14, 703--706.

\bibitem{S}{\sc J. Serrin}, 
{\em On the uniqueness of compressible fluid motions}, 
Arch. Rational Mech. Anal. {\bf 3} (1959), 271--288.

\bibitem{TR} {\sc R.~M. Temam}, {\em Navier-Stokes equations},
revised edition, 
Studies in Mathematics and its Applications, 2, North-Holland, Amsterdam-New York, 1979.



\end {thebibliography}

\end{document}